\newtheorem{corollary}{Corollary}[section]
\newtheorem{theorem}[corollary]{Theorem}
\newtheorem{proposition}[corollary]{Proposition}
\newtheorem{lemma}[corollary]{Lemma}
\theoremstyle{definition}
\newtheorem{definition}[corollary]{Definition}
\theoremstyle{remark}
\newtheorem{remark}[corollary]{Remark}
\newtheorem*{remark*}{Remark}
\newtheorem{example}[corollary]{Example}
\newtheorem*{notation*}{Notation}
\newcommand{\C}{{\mathbb C}}
\newcommand{\N}{{\mathbb N}}
\newcommand{\R}{{\mathbb R}}
\newcommand{\Hyp}{\mathbb{H}}
\newcommand{\AdS}{\mathbb{A}\mathrm{d}\mathbb{S}}
\newcommand{\psl}{\mathfrak{sl}}
\newcommand{\SL}{\mathrm{SL}}
\newcommand{\PSL}{\mathrm{PSL}}
\newcommand{\Ker}{\mathrm{Ker}}
\newcommand{\tr}{\mathrm{tr}}
\newcommand{\gr}{\mathrm{graph}}
\newcommand{\mat}{\mathcal M_2(\R)}
\newcommand{\pmat}{\mathrm P\mat}
\newcommand{\con}{\mathcal C}
\newcommand{\id}{\mathrm{id}}
\newcommand{\RP}{\R \mathrm{P}}
\newcommand{\Fix}{\mathrm{Fix}}
\renewcommand{\1}{\mathbbm{1}}
\begin{document}

\setcounter{secnumdepth}{2}
\setcounter{tocdepth}{2}

\title[The Anti-de Sitter proof of Thurston's earthquake theorem]{The Anti-de Sitter proof of Thurston's earthquake theorem}

\author[Farid Diaf]{Farid Diaf}
\address{Farid Diaf: Univ. Grenoble Alpes, CNRS, IF, 38000 Grenoble, France.} \email{farid.diaf@univ-grenoble-alpes.fr}

\author[Andrea Seppi]{Andrea Seppi}
\address{Andrea Seppi: Univ. Grenoble Alpes, CNRS, IF, 38000 Grenoble, France.} \email{andrea.seppi@univ-grenoble-alpes.fr}

\thanks{The second author is member of the national research group GNSAGA}

\maketitle

\begin{abstract}
Thurston's earthquake theorem asserts that every orientation-preserving homeomorphism of the circle admits an extension to the hyperbolic plane which is a (left or right) earthquake. The purpose of these notes is to provide a proof of Thurston's earthquake theorem, using the bi-invariant geometry of the Lie group $\mathrm{PSL}(2,\mathbb R)$, which is also called Anti-de Sitter three-space. The involved techniques are elementary, and no background knowledge is assumed apart from some two-dimensional hyperbolic geometry.
\end{abstract}

\tableofcontents

\section{Introduction}

Since the 1980s, \emph{earthquake maps} have played an important role in the study of hyperbolic geometry and Teichm\"uller theory. These are (possibly discontinuous) maps of the hyperbolic plane to itself that, roughly speaking, are isometric in the complement of a subset of the hyperbolic plane which is a disjoint union of geodesics, and they "slip" along the "faults" represented by these geodesics. In particular, they may have points of discontinuity there. In general, an earthquake map can be complicated, and it is an isometry only on the connected components of the complement of a measured geodesic lamination.  

To achieve the solution of the Nielsen realization problem \cite{ker}, Steven Kerckhoff proved the so-called \emph{earthquake theorem} for closed hyperbolic surfaces, that is, the existence of a left (right) earthquake map between any two closed hyperbolic surfaces of the same genus. In \cite{thurston}, William Thurston gave a generalization, proved by independent methods, to a \emph{universal} setting, which is the statement that we consider in the present notes: he proved that every orientation-preserving homeomorphism of the circle admits an extension to the hyperbolic plane which is a (left or right) earthquake. Earthquake maps have  been extensively studied later in various directions, see \cite{ghl,hu,misaric,pfeil,saric,saric2,saric3}

\subsection*{Mess' groundbreaking work and later developments}

In his 1990 pioneering paper \cite{mess}, Geoffrey Mess has first highlighted the deep connections between the Teichm\"uller theory of hyperbolic surfaces, and three-dimensional Lorentzian geometries of constant sectional curvature. In particular, the so-called Anti-de Sitter geometry is the Lorentzian geometry of constant negative curvature --- that is, the Lorentzian analogue of hyperbolic geometry. One of the models of Anti-de Sitter three-space is simply the Lie group $\PSL(2,\R)$, endowed with a Lorentzian metric which is induced by the (bi-invariant) Killing form on its Lie algebra. This is the model that we adopt in the present work.

Mess has then observed that convex hulls in Anti-de Sitter space can be used, together with a \emph{Gauss map} construction for spacelike surfaces, to prove earthquake theorems in hyperbolic geometry. In \cite{mess}, Mess outlined the proof of the earthquake theorem between closed hyperbolic surfaces. His groundbreaking ideas have been improved and implemented by several authors, leading to many results of existence of earthquake maps in various settings \cite{bonschkra,bonsch,bonsch2,rosmondi} and of other interesting types of extensions \cite{bonsch3,bonsep,bonsep2,seppi}. See also the paper \cite{survey}, which is a detailed introduction to Anti-de Sitter geometry, contains a general treatment of the Gauss map, but only sketches some of the  ideas that appear in the proof of Mess.

The literature seems to lack a complete proof of the earthquake theorem, in Thurston's universal version, which relies on Anti-de Sitter geometry.  In this note, we will provide a detailed proof of Thurston's earthquake theorem (Theorem \ref{thm:main}), and we will then recover (Corollary \ref{cor:invariance}) the existence of earthquake maps for closed hyperbolic surfaces. While the proofs that appear in \cite{mess}, and in several of the aforementioned subsequent works, make use of a computation of the holonomy, here we will simply work with the definition of earthquake map. 

In fact, the proof presented here, although going through several technical steps, entirely involves elementary tools.
 The only required knowledge for these notes is the hyperbolic plane geometry in the upper half-space model, and the very basic definitions of Lie groups theory and Lorentzian geometry. 

\subsection*{A quick comparison of the two proofs}

It is also worth remarking that the proof presented here, and suggested by Mess, is not entirely different in spirit from Thurston's proof in \cite{thurston}. Indeed, the starting point of Thurston's proof consists in considering, given an orientation-preserving homeomorphism $f$ or the circle, those isometries $\gamma$ of  the hyperbolic plane such that the composition $h:=\gamma\circ f$ is \emph{extreme left}: that is, such that $h$ has a lift $\tilde h:\R\to\R$ satisfying $h(x)\leq x$ and whose fixed point set is non-empty. In Thurston's words,  "$h$ moves points counterclockwise on the circle, except for those points that it fixes". Then Thurston defines the earthquake map to be equal to $\gamma^{-1}$ on the convex hull of the fixed points of $h$. 

This has an interpretation in terms of Anti-de Sitter geometry. Spacelike planes in Anti-de Sitter space, which is simply the Lie group $\PSL(2,\R)$, are isometrically embedded copies of the hyperbolic plane, and are parameterized by elements of $\PSL(2,\R)$ itself, via a natural duality. For instance, the dual plane to the identity consists of all elliptic elements of order two, which is identified with the hyperbolic plane itself via the fixed point map. The ``extreme left condition'' as above is then exactly equivalent to the condition that the spacelike plane dual to $\gamma$ is a \emph{past support plane }of the convex hull of the graph of $f$, which can be seen as a subset of the boundary at infinity of Anti-de Sitter space.

The proof presented here then consists in considering the \emph{left and right projections}, defined on the past boundary components of the convex hull, and to consider the composition $E$ of one projection with the inverse of the other. It turns out that this composition map $E$ is indeed equal to $\gamma^{-1}$ on the convex hull of the fixed points of $\gamma\circ f$, as in Thurston's \emph{ansatz}. Of course one can replace extreme left by extreme right, and past boundary with future boundary, to obtain  right earthquakes instead of left earthquakes.

We remark that the main statement proved by Thurston also includes a uniqueness part. In fact, the earthquake map is not quite unique, but it is up to a certain choice that  has to be made at every geodesic where it is discontinuous. We will give an interpretation of this phenomenon in terms of a choice of support plane at the points of the boundary of the convex hull that admit several support planes, but we will not provide a proof of the uniqueness part here. 

\subsection*{Main elements of the Anti-de Sitter proof}

Despite the above analogies with Thurston's original proof of the existence of left and right earthquakes, developing the proof in the Anti-de Sitter setting then leads to remarkable differences with respect to Thurston's  proof. A large part of our proof is actually achieved by a reduction to the situation of an orientation-preserving homeomorphism of the circle which is equal to the restriction of an element $\gamma_i$ of $\PSL(2,\R)$ on an interval $I_i$ ($i=1,2$), where $I_1\cup I_2$ equals the circle. In this situation the earthquake extension is already well-known, and consists of a \emph{simple earthquake}. However, understanding this example in detail from the perspective of Anti-de Sitter geometry --- which corresponds to the situation where a boundary component of the convex envelope of $f$ is the union of two totally geodesic half-planes meeting along a geodesic --- then permits to prove easily some of the fundamental properties that one has to verify in order to show that the composition map $E$ is an earthquake map. 

There are furthermore two main technical statements that we have to prove. The first is the fact that the left and right projections (although they can be discontinuous) are bijective --- which is essential since the earthquake map is defined as the inverse of the left projection post-composed with the right projection, and implies that $E$ itself is a bijection of the hyperbolic plane. While injectivity is easy using the aforementioned example of two totally geodesic planes meeting along a geodesic, surjectivity requires a more technical argument. The second statement is an extension lemma, which ensures that the left and right projections (although sometimes discontinuous) extend continuously to the boundary, and the extension is simply  the projection from the graph of $f$ onto the first and second factor. This ensures that the composition $E$ of the right projection with the inverse of the left projection extends to $f$ itself on the circle at infinity.

Some of the above steps do of course involve a number of technical difficulties, but the language of Anti-de Sitter geometry is, in our opinion, extremely effective, and permits to stick to quite elementary techniques in the entire work. 

\subsection*{Acknowledgements}

We would like to thank Pierre Will for a remark on the description of timelike planes via composition of orientation-reversing isometries, that is used in Section \ref{sec:timelike planes}. We are grateful to Filippo Mazzoli and Athanase Papadopoulos   for useful suggestions that helped improving the exposition.

\section{Earthquake maps}
\label{sec:eart}

Throughout this work, we will use the upper half-plane model of the hyperbolic plane $\Hyp^2$, that is, $\Hyp^2$ is the half-space $\mathrm{Im}(z)>0$ in $\C$ endowed with the Riemannian metric $|dz|^2/\mathrm{Im}(z)^2$ of constant curvature $-1$. Its visual boundary $\partial_\infty\Hyp^2$ is therefore identified with $\R\cup\{\infty\}$, and $\overline{\Hyp}{}^2=\Hyp^2\cup\partial_\infty\Hyp^2$ is endowed with the topology given by the one-point compactification of the closed half-plane $\mathrm{Im}(z)\geq 0$. The isometry group of $\Hyp^2$ is identified with the group $\PSL(2,\R)$ acting by homographies, and its action naturally extends to $\partial_\infty\Hyp^2$.

\begin{definition}
A geodesic lamination $\lambda$ of $\Hyp^2$ is a collection of
disjoint geodesics that foliate a closed subset $X\subseteq\Hyp^2$. The closed set $X$ is called the \emph{support} of $\lambda$. The geodesics in $\lambda$ are called \emph{leaves}. The connected components of the complement $\mathbb{H}^2\setminus X$ are called \emph{gaps}. The \emph{strata} of $\lambda$ are the leaves and the gaps.
\end{definition}

Given a hyperbolic isometry $\gamma$ of $\Hyp^2$, the \emph{axis} of $\gamma$ is the geodesic $\ell$ of $\Hyp^2$ connecting the two fixed points of $\gamma$ in $\partial_\infty\Hyp^2$. Therefore the axis $\ell$ is preserved by $\gamma$, and when restricted to $\ell$, $\gamma|_\ell:\ell\to\ell$ acts as a translation  with respect to any constant speed parameterization of $\ell$. 

Given two subsets $A,B$ of $\Hyp^2$, we say that a geodesic $\ell$ \emph{weakly separates} $A$ and $B$ if $A$ and $B$ are contained in the closure of different connected components of $\Hyp^2\setminus \ell$.

\begin{definition}\label{defi:eart}
A \emph{left} (resp. \emph{right}) \emph{earthquake} of $\Hyp^2$
is a bijective map $E:\mathbb{H}^2\to \mathbb{H}^2$ such that there exists a  geodesic lamination $\lambda$ for which
the restriction $E|_{S}$ of $E$ to any stratum $S$ of $\lambda$ is equal to the restriction of
an isometry of $\Hyp^2$, and 
for any two strata $S$ and $S'$ of $\lambda$, the comparison isometry
$$\mathrm{Comp}(S,S'):=(E|_{S})^{-1}\circ E|_{S'}$$ is the restriction of an isometry $\gamma$ of $\Hyp^2$, such that:
\begin{itemize}
\item $\gamma$ is different from the identity, unless possibly when one of the two strata $S$ and $S'$ is contained in the closure of the other;
\item 
when it is not the identity, 
 $\gamma$ is a hyperbolic transformation whose axis $\ell$ weakly separates $S$ and $S'$;
\item moreover, $\gamma$
translates to the left (resp. right), seen from $S$ to $S'$. 
\end{itemize}
\end{definition}

Let us clarify the meaning of this last condition. Suppose $f:[0,1]\to\Hyp^2$ is smooth path  such that $f(0)\in S$, $f(1)\in S'$ and the image of $f$ intersects $\ell$ transversely and exactly at one point $z_0=f(t_0)\in\ell$. Let $v=f'(t_0)\in T_{z_0}\Hyp^2$ be the tangent vector at the intersection point. Let $w\in T_{z_0}\Hyp^2$ be a vector tangent to the geodesic $\ell$  pointing towards $\gamma(z_0)$. Then we say that $\gamma$ translates to the left (resp. right) seen from $S$ to $S'$ if $v,w$ is a positive (resp. negative) basis of $T_{z_0}\Hyp^2$, for the standard orientation of $\Hyp^2$.

It is important to observe that this condition is independent of the order in which we choose $S$ and $S'$. That is, if $\mathrm{Comp}(S,S')$ translates to the left (resp. right) seen from $S$ to $S'$, then $\mathrm{Comp}(S',S)$ translates to the left (resp. right) seen from $S'$ to $S$.

We remark that  an earthquake $E$ is not required to be continuous. In fact, in some cases it will \emph{not} be continuous, for instance when the lamination $\lambda$ is finite, meaning that $\lambda$ is a collection of a finite number of geodesics. This is best visualized in the following simple example.

\begin{example}\label{ex:simple}
The map
$$E:\Hyp^2\to\Hyp^2$$
defined in the upper half-space model of $\Hyp^2$ by:
$$E(z)=\begin{cases}
z & \text{if }\mathrm{Re}(z)<0 \\
a z & \text{if }\mathrm{Re}(z)=0 \\
b z & \text{if }\mathrm{Re}(z)>0
\end{cases}$$
is a left earthquake if $1<a<b$, and a right earthquake if $0<b<a<1$. The lamination $\lambda$ that satisfies Definition \ref{defi:eart} is composed of a unique geodesic, namely the geodesic $\ell$ with endpoints $0$ and $\infty$. 
\end{example}

It is clear that  the earthquake map $E$ from Example \ref{ex:simple} is not continuous along $\ell$. 

Despite the lack of continuity, Thurston proved that any earthquake map extends continuously to an orientation-preserving homeomorphism of $\partial_\infty\Hyp^2$, meaning that there exists a (unique) orientation-preserving homeomorphism $f:\partial_\infty\Hyp^2\to\partial_\infty\Hyp^2$ such that the map 
$$\overline E(z)=\begin{cases}
E(z) &\text{if }z\in\Hyp^2 \\
f(z) &\text{if }z\in\partial_\infty\Hyp^2
\end{cases}$$
is continuous at every point of $\partial_\infty\Hyp^2$.

Then Thurston provided a proof of the following theorem, that he called \emph{``geology is transitive''}:

\begin{theorem}\label{thm:main}
Given any orientation-preserving homeomorphism $f:\partial_\infty\Hyp^2\to\partial_\infty\Hyp^2$, there exists a left earthquake map of $\Hyp^2$, and a right earthquake map, that extend continuously to $f$ on $\partial_\infty\Hyp^2$.
\end{theorem}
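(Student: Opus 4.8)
The plan is to follow the Anti-de Sitter strategy outlined in the introduction, and I organize it into the following steps.

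\textbf{Setup via the graph of $f$ in the boundary of $\AdS^3$.} The model of Anti-de Sitter space is $\PSL(2,\R)$ with the bi-invariant metric coming from the Killing form; its boundary at infinity is naturally identified with $\partial_\infty\Hyp^2\times\partial_\infty\Hyp^2$ (thinking of $\PSL(2,\R)\subset\PSL(2,\R)\times\PSL(2,\R)$ acting on $\Hyp^2\times\Hyp^2$, with the diagonal boundary removed being timelike-separated, etc.). Given the orientation-preserving homeomorphism $f:\partial_\infty\Hyp^2\to\partial_\infty\Hyp^2$, I would consider its graph $\Lambda_f=\{(x,f(x)):x\in\partial_\infty\Hyp^2\}$, an \emph{acausal} topological circle in $\partial\AdS^3$, and take $\mathcal{C}(f)$, the convex hull of $\Lambda_f$ in (an affine chart of) $\AdS^3$. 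The first task is to establish the basic structure of $\mathcal{C}(f)$: its boundary decomposes into a \emph{past} boundary component $\partial_-\mathcal{C}(f)$ and a \emph{future} one $\partial_+\mathcal{C}(f)$, each of which is an achronal surface that is a union of (pieces of) spacelike and lightlike totally geodesic planes, "pleated" along a geodesic lamination. Here I would exploit the duality between points of $\AdS^3$ and spacelike planes: $\gamma\in\PSL(2,\R)$ is dual to the plane of involutions whose fixed point lies on a geodesic determined by $\gamma$, and "$\gamma\circ f$ is extreme left" translates exactly to "the plane dual to $\gamma$ is a past support plane of $\mathcal{C}(f)$". This dictionary reduces the analysis of support planes to statements about homeomorphisms of the circle.

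\textbf{Left and right projections and the model example.} On the past boundary component $\partial_-\mathcal{C}(f)$ I would define the left projection $\Pi_L$ and right projection $\Pi_R$ onto $\Hyp^2$, using the fact that each stratum of the pleated surface lies in a spacelike (or lightlike) plane, which under the two factor-maps of $\AdS^3\approx\PSL(2,\R)$ projects to $\Hyp^2$. The key model computation is the case where $f$ agrees with $\gamma_1\in\PSL(2,\R)$ on an interval $I_1$ and with $\gamma_2$ on the complementary interval $I_2$: then $\partial_-\mathcal{C}(f)$ is exactly two totally geodesic half-planes meeting along a geodesic $\ell$, and a direct computation shows that $\Pi_R\circ\Pi_L^{-1}$ is the simple earthquake of Example \ref{ex:simple} (up to post-composition by an isometry), in particular it is a bijection of $\Hyp^2$ and the comparison isometry across $\ell$ is hyperbolic with axis $\ell$, translating in the correct direction. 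From this model case I would extract, by a limiting/approximation argument over finite laminations, the two technical statements flagged in the introduction: (i) $\Pi_L$ and $\Pi_R$ are bijections $\partial_-\mathcal{C}(f)\to\Hyp^2$ — injectivity is local and follows from the two-half-planes picture, while surjectivity needs a more careful argument showing the image is open, closed and nonempty; and (ii) the extension lemma: $\Pi_L$ and $\Pi_R$ extend continuously to $\Lambda_f$, with boundary values the projections $(x,f(x))\mapsto x$ and $(x,f(x))\mapsto f(x)$ respectively.

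\textbf{Assembling the earthquake.} I would then \emph{define} $E:=\Pi_R\circ\Pi_L^{-1}:\Hyp^2\to\Hyp^2$. By (i) it is a bijection. Pull back the pleating lamination of $\partial_-\mathcal{C}(f)$ via $\Pi_L$ to get a geodesic lamination $\lambda$ of $\Hyp^2$; on each stratum, both $\Pi_L$ and $\Pi_R$ are restrictions of isometries (this is exactly the content of the model computation applied stratum by stratum), hence $E|_S$ is an isometry. For two strata $S,S'$, the comparison isometry $\mathrm{Comp}(S,S')=(E|_S)^{-1}\circ E|_{S'}$ is controlled, via the duality, by the relative position of the two support planes of $\mathcal{C}(f)$ containing $S$ and $S'$; the two-half-planes model case shows it is either the identity (when one stratum is in the closure of the other) or a hyperbolic isometry whose axis weakly separates $S$ and $S'$ and which translates \emph{to the left} seen from $S$ to $S'$ — the leftness being a consequence of choosing the \emph{past} (rather than future) boundary component and tracking orientations carefully. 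Finally, by the extension lemma (ii), $\overline{E}$ restricted to $\partial_\infty\Hyp^2$ equals $f\circ(\mathrm{id})^{-1}=f$, so $E$ extends continuously to $f$. Replacing $\partial_-\mathcal{C}(f)$ by $\partial_+\mathcal{C}(f)$ (equivalently, extreme left by extreme right) yields the right earthquake.

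\textbf{Main obstacle.} I expect the hardest part to be \emph{surjectivity} of the left and right projections $\Pi_L,\Pi_R:\partial_-\mathcal{C}(f)\to\Hyp^2$. Injectivity, the structure of the convex hull boundary, and the stratum-by-stratum isometry property all follow relatively cleanly from the two-half-planes model case and elementary hyperbolic/Lorentzian geometry; but surjectivity cannot be reduced to a purely local statement. The natural approach is to show the image is nonempty, open (using local analysis near a stratum, again via the model case and the fact that support planes vary appropriately), and closed (a properness/compactness argument, controlling a sequence of points in $\partial_-\mathcal{C}(f)$ whose projections converge, using the extension lemma near $\Lambda_f$ to prevent escape to the boundary). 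Making this closedness argument rigorous — ruling out the escape of mass to the lightlike part of the boundary or to $\Lambda_f$ — is where the real technical work lies. A secondary delicate point is verifying the \emph{direction} (left vs.\ right) of every comparison isometry uniformly, which requires a consistent orientation convention relating the ambient orientation of $\AdS^3$, the choice of past boundary component, and the standard orientation of $\Hyp^2$, and then checking it survives the limiting argument from finite to arbitrary laminations.
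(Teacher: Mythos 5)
Your proposal follows essentially the same route as the paper: the convex hull of $\gr(f)$ in $\AdS^3\cong\PSL(2,\R)$, left and right projections on the past (resp.\ future) boundary component defined via spacelike support planes, the two-half-planes configuration as the key model computation giving injectivity and the direction of the comparison isometries, the boundary-extension lemma, a closedness-plus-local-contradiction argument for surjectivity, and finally $E=\pi_r^\mp\circ(\pi_l^\mp)^{-1}$ as the left (resp.\ right) earthquake extending $f$. The only deviation is your passing suggestion of a limiting argument over finite laminations, which the paper does not need (it works directly with arbitrary support planes, using the convergence lemma for support planes and the independence of the image from the choice of support plane); otherwise the structure and the difficulties you flag match the paper's treatment.
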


We remark that the earthquake map is not unique, as shown by Example \ref{ex:simple}, which provides a family of left (resp. right) earthquake maps extending the homeomorphism 
$$
f(x)=\begin{cases}
x & \text{if }x\leq 0 \\
b x & \text{if }x\geq 0 \\
\infty & \text{if } x=\infty
\end{cases}~,
$$
parameterized by the choice of $a\in(1,b)$ (resp. $a\in (b,1)$). Thurston's theorem is actually stronger than the statement of Theorem \ref{thm:main} above, since it characterizes the non-uniqueness as well. In short, the range of choices of the earthquake extension as in Example \ref{ex:simple} is essentially the only indeterminacy that occurs, and it happens exactly on each leaf of the lamination where the earthquake is discontinuous. We will not deal with the uniqueness part as in Thurston's work here. Nevertheless, in Subsection \ref{subsec:closed} we will show that our proof permits to recover the existence of earthquake maps between homeomorphic closed hyperbolic surfaces, not relying on the uniqueness property.

\section{Anti-de Sitter geometry}
\label{sec:ads}

In this section, we will introduce the fundamental notions in Anti-de Sitter geometry. For more details, the reader can consult \cite[Section 3]{survey}.

\subsection{First definitions}
The three-dimensional Anti-de Sitter space $\AdS^3$ is the Lie group $\PSL(2,\R)$, that is, the group of orientation-preserving isometries of $\Hyp^2$, endowed with a bi-invariant metric of signature $(2,1)$ (namely, a Lorentzian metric) which we now construct. 

Consider first the double cover $\SL(2,\R)$ of $\PSL(2,\R)$, which we realize as the subset of matrices of unit determinant in the four-dimensional vector space $\mat$ of 2-by-2 matrices. Endow $\mat$ with the quadratic form $q$:
$$q(A)=-\det (A)~.$$
It can be checked that $q$ has signature $(2,2)$. The associated bilinear form is expressed by the formula:
\begin{equation}\label{eq:product}
\langle A,B\rangle=-\frac{1}{2}\tr(A\cdot \mathrm{adj}(B))
\end{equation}
for $A,B\in \mat$, where $\mathrm{adj}$ denotes the adjugate matrix, namely
$$\mathrm{adj}\begin{pmatrix} a & b \\ c & d\end{pmatrix}=\begin{pmatrix} d & -b \\ -c & a\end{pmatrix}~.$$

Then $\SL(2,\R)$ is realized as the subset of $\mat$ defined by the condition $q(A)=-1$, and the restriction of $\langle\cdot,\cdot\rangle$ to the tangent space of $\SL(2,\R)$ at every point defines a pseudo-Riemannian metric of signature $(2,1)$. We will still denote by $\langle\cdot,\cdot\rangle$ this metric on $\SL(2,\R)$, and by $q$ the corresponding quadratic form. It can be shown that this metric has constant curvature $-1$, and the restriction of $\langle\cdot,\cdot\rangle$ to the Lie algebra $\psl(2,\R)$ coincides with $1/8$ times the Killing form of $\SL(2,\R)$.

Clearly both $\SL(2,\R)$ and $q$ are invariant under multiplication by minus the identity matrix, hence the quotient $\PSL(2,\R)=\SL(2,\R)/\{\pm 1\}$ is endowed with a Lorentzian metric of constant curvature $-1$, and is what we call the (three-dimensional) Anti-de Sitter space $\AdS^3$. It turns out that the group of orientation-preserving and time-preserving isometries of $\AdS^3$ is the group $\PSL(2,\R)\times\PSL(2,\R)$, acting by left and right multiplication on $\PSL(2,\R)\cong\AdS^3$:

$$(\alpha,\beta)\cdot \gamma:=\alpha\gamma\beta^{-1}~.$$

Although orientation-preserving and time-preserving are notions that do not depend on a chosen orientation, we will fix here an orientation and a time-orientation of $\AdS^3\cong\PSL(2,\R)$. To define an orientation on a Lie group, it actually suffices to define it in the Lie algebra, namely the tangent space at the identity $\1$. Hence we declare that the following is an oriented basis (which is actually orthonormal) of $\psl(2,\R)$:

\begin{equation} \label{eq:orientation}
V= \begin{pmatrix}
0 & 1 \\
1 & 0 
\end{pmatrix} \qquad W=\begin{pmatrix}
1 & 0 \\
0 & -1 
\end{pmatrix} \qquad U=\begin{pmatrix}
0 & -1 \\
1 & 0 
\end{pmatrix}
\end{equation}
Observe that the vectors $V,W$ are spacelike (i.e. $q(V,V)>0$ and $q(W,W)>0$), while $U$ is timelike ($q(U,U)<0$). One can check that $U$ is the tangent vector to the one-parameter group of elliptic isometries of $\Hyp^2$ fixing $i\in\Hyp^2$, parameterized by the angle of clockwise rotation; $V$ and $W$ are vectors tangent to the one-parameter groups of hyperbolic isometries fixing the geodesics with endpoints $(-1,1)$ and $(0,\infty)$ respectively.
Analogously, to define a time-orientation it suffices to define it in the Lie algebra, and we declare that $U$ is a future-pointing timelike vector.

\subsection{Boundary at infinity}

The boundary at infinity of $\AdS^3$ is defined as the projectivization of the cone of rank one matrices in $\mat$:
$$\partial_\infty\AdS^3=\mathrm P\{A\,|\,q(A)=0,\,A\neq 0\}~.$$
We endow $\overline\AdS{}^3=\AdS^3\cup\partial_\infty\AdS^3$ with the topology induced by seeing both $\AdS^3$ and $\partial_\infty\AdS^3$ as subsets of the (real) projective space $\pmat$ over the vector space $\mat$. Hence $\overline\AdS{}^3$ is the compactification of $\AdS^3$ in $\pmat$. It will be extremely useful to consider the homeomorphism between $\partial_\infty\AdS^3$ and $\RP^1\times\RP^1$, which is defined as follows:
\begin{equation}\label{eq:bdy}
\begin{array}{ccccc}
\delta & : & \partial_{\infty} \AdS^3 & \to & \RP^1\times\RP^1 \\
 & & [X] & \mapsto & (\mathrm{Im}(X),\mathrm{Ker}(X)) \\
\end{array}
\end{equation}
where of course in the right-hand side we interpret $\RP^1$ as the space of one-dimensional subspaces of $\R^2$. Since we have $\mathrm{Im}(AXB^{-1})=A\cdot \mathrm{Im}(X)$ and $\mathrm{Ker}(AXB^{-1})=B\cdot \mathrm{Ker}(X)$, the map $\delta$ is equivariant with respect to the action of the  group $\PSL(2,\R)\times\PSL(2,\R)$, acting on $\partial_\infty\AdS^3$ as the natural extension of the group of isometries of $\AdS^3$, and on $\RP^1\times\RP^1$ by the obvious product action.

The following is a useful characterization of  sequences in $\AdS^3$ converging to a point in the boundary (see \cite[Lemma 3.2.2]{survey}): for $(\gamma_n)_{n\in \N}$ a sequence of isometries of $\Hyp^2$, we have:
\begin{equation}\label{eq:convergence}
\begin{split}
\gamma_n\to \delta^{-1}(x,y)&\Leftrightarrow \text{there exists }z\in\Hyp^2\text{ such that }\gamma_n(z)\to x\text{ and }\gamma_n^{-1}(z)\to y  \\
& \Leftrightarrow  \text{for every }z\in\Hyp^2,\,\gamma_n(z)\to x\text{ and }\gamma_n^{-1}(z)\to y
\end{split}
\end{equation}
where of course here we are using the standard identification between $\RP^1$ and the visual boundary $\R\cup\{\infty\}=\partial_\infty\Hyp^2$, mapping the line spanned by $(a,b)$ to $a/b$. 

A fundamental step in the proof of the earthquake theorem is that to any map $f:\partial_\infty\Hyp^2\to \partial_\infty\Hyp^2$ we can associate a subset of $\partial_\infty\AdS^3$, namely (via the map $\delta$) the graph of $f$. By the equivariance of the map $\delta$ introduced in \eqref{eq:bdy}, we see immediately that, for $(\alpha,\beta)\in \PSL(2,\R)\times\PSL(2,\R)$:

\begin{equation}\label{eq:action graph}
(\alpha,\beta)\cdot \mathrm{graph}(f)=\mathrm{graph}(\beta f\alpha^{-1})~.
\end{equation}

In the rest of this paper, we will omit the map $\delta$, and we will simply identify $\partial_\infty\AdS^3$ with $\RP^1\times\RP^1$.

\subsection{Spacelike planes}

We conclude the preliminaries by an analysis of totally geodesic planes in $\AdS^3$. They are all obtained as the intersection of $\AdS^3$ with a projective subspace in the projective space $\pmat$ over $\mat$. Hence they are all of the following form:
\begin{equation}\label{eq:plane}
P_{[A]}=\{[X]\in\PSL(2,\R)\,|\,\langle X,A\rangle=0\}
\end{equation}
for some nonzero 2-by-2 matrix $A$. The notation $P_{[A]}$ is justified by the observation that the plane $P_A$ defined in the right-hand side of   \eqref{eq:plane}  depends only on the projective class of $A$. The totally geodesic plane $P_{[A]}$ is spacelike (resp. timelike, lightlike) if and only if $q(A)=-\det(A)$ is negative (resp. positive, null).   It will be called the \emph{dual plane} of $[A]$, since it can be seen as a particular case of the usual projective duality between points and planes in projective space. In particular, the dual plane $P_\gamma$ of an element $\gamma\in \PSL(2,\R)$ is a spacelike totally geodesic plane.

\begin{example}\label{ex:traceless}
The first example, which is of fundamental importance for the following, is for $\gamma=\1$ is the identity of $\PSL(2,\R)$. By \eqref{eq:product}, $P_\1$ is the subset of $\PSL(2,\R)$ consisting of projective classes of unit matrices $X$ with $\tr(X)=0$. By the Cayley--Hamilton theorem, $X^2=-\id$, hence the elements of $P_\1$ are order--two isometries of $\Hyp^2$, that is, elliptic elements with rotation angle $\pi$. Observe that $P_\1$ is invariant under the action of $\PSL(2,\R)$ by conjugation, which corresponds to the diagonal in the isometry group $\PSL(2,\R)\times\PSL(2,\R)$ of $\AdS^3$. Using \eqref{eq:convergence}, one immediately sees that the boundary of $P_\1$ in $\partial_\infty\AdS^3\cong \RP^1\times\RP^1$ is the diagonal; more precisely:
\begin{equation}\label{eq:graph id}
\partial_\infty P_\1=\mathrm{graph}(\1)\subset \RP^1\times\RP^1~.
\end{equation}

Given a point $z\in\Hyp^2$, let us denote by $\mathcal R_z$ the order--two elliptic isometry with fixed point $z$. We claim that the map 
$$\iota:\Hyp^2\to P_\1\qquad \iota(z)=\mathcal R_z$$
is an isometry with respect to the hyperbolic metric of $\Hyp^2$ and the induced metric on $P_\1\subset\AdS^3$. First, the inverse of $\iota$ is simply the fixed-point map $\Fix:P_\1\to\Hyp^2$ sending an elliptic isometry to its fixed point, which also shows that $\iota$ is equivariant with respect to the action of $\PSL(2,\R)$ on $\Hyp^2$ by homographies and on $P_\1$ by conjugation, since $\Fix(\alpha\gamma\alpha^{-1})=\alpha(\Fix(\gamma)$). That is, we have the relation
\begin{equation}\label{eq:uivariance}
\iota(\alpha\cdot p)=\alpha\circ\iota(p)\circ\alpha^{-1}~.
\end{equation}
This immediately implies that $\iota$ is isometric, since the pull-back of the metric of $P_\1$ is necessarily $\PSL(2,\R)$-invariant and has constant curvature $-1$, hence it coincides with the standard hyperbolic metric on the upper half-space.
\end{example}

This example is actually the essential example to understand general spacelike totally geodesic planes. Indeed, every \emph{spacelike} totally geodesic  plane is of the form $P_\gamma$ for some $\gamma\in \PSL(2,\R)$. To see this, observe that the action of the isometry group of $\AdS^3$ on spacelike totally geodesic  planes is transitive, and that $P_\gamma=(\gamma,\1)P_{\1}$ because the isometry $(\gamma,\1)$ maps $\1$ to $\gamma$, and therefore maps the dual plane of $\1$ to the dual plane of $\gamma$. By \eqref{eq:action graph} and \eqref{eq:graph id}, we immediately conclude the following:

\begin{lemma}\label{lem:spacelike planes}
Every spacelike totally geodesic plane of $\AdS^3$ is of the form $P_{\gamma}$ for some orientation-preserving isometry $\gamma$ of $\Hyp^2$, and
$$\partial_\infty P_\gamma=\mathrm{graph}(\gamma^{-1})\subset \RP^1\times\RP^1~.$$
\end{lemma}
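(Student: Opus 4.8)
The plan is to derive both assertions formally from Example \ref{ex:traceless}, using the equivariance of the projective duality and of the boundary identification $\delta$.

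For the first assertion, I would start from the general form \eqref{eq:plane} of a totally geodesic plane, $P_{[A]}=\{[X]\,|\,\langle X,A\rangle=0\}$. By the discussion following \eqref{eq:plane}, $P_{[A]}$ is spacelike exactly when $q(A)=-\det(A)<0$, i.e.\ $\det(A)>0$. Since $P_{[A]}$ depends only on the projective class of $A$, I may replace $A$ by $A/\sqrt{\det(A)}$ and assume $\det(A)=1$; then $A\in\SL(2,\R)$ represents an element $\gamma=[A]\in\PSL(2,\R)\cong\AdS^3$, and $P_{[A]}=P_\gamma$ by the very definition of the dual plane of a point of $\PSL(2,\R)$. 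This already gives that every spacelike totally geodesic plane is $P_\gamma$ for some orientation-preserving isometry $\gamma$ of $\Hyp^2$.

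For the identity $\partial_\infty P_\gamma=\gr(\gamma^{-1})$, the point is that the duality $[A]\mapsto P_{[A]}$ is equivariant under the isometry group $\PSL(2,\R)\times\PSL(2,\R)$: since $\det(\alpha X\beta^{-1})=\det(X)$ one has $\langle \alpha X\beta^{-1},\alpha A\beta^{-1}\rangle=\langle X,A\rangle$, whence $[X]\in P_{[A]}$ if and only if $(\alpha,\beta)\cdot[X]\in P_{(\alpha,\beta)\cdot[A]}$, i.e.\ $(\alpha,\beta)\cdot P_{[A]}=P_{(\alpha,\beta)\cdot[A]}$. Taking $(\alpha,\beta)=(\gamma,\1)$ and $A=\1$, and using that $(\gamma,\1)$ sends $\1$ to $\gamma$, yields $P_\gamma=(\gamma,\1)\cdot P_\1$. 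Now I pass to boundaries at infinity: the action of $\PSL(2,\R)\times\PSL(2,\R)$ on $\overline\AdS{}^3\subset\pmat$ is by homeomorphisms preserving $\partial_\infty\AdS^3$, so it commutes with taking the boundary of a totally geodesic plane, giving $\partial_\infty P_\gamma=(\gamma,\1)\cdot\partial_\infty P_\1$. By \eqref{eq:graph id} we have $\partial_\infty P_\1=\gr(\1)$, and by \eqref{eq:action graph}, $(\gamma,\1)\cdot\gr(\1)=\gr(\1\circ\1\circ\gamma^{-1})=\gr(\gamma^{-1})$, which is the claim.

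There is no real obstacle here: the whole statement is a formal consequence of Example \ref{ex:traceless} and the two equivariance formulas \eqref{eq:action graph} and \eqref{eq:graph id}. The only point that genuinely needs to be checked is the equivariance of the duality, i.e.\ that an isometry of $\AdS^3$ carries the dual plane of a point to the dual plane of its image; as indicated above this follows at once from the invariance of $\det$ under $X\mapsto \alpha X\beta^{-1}$, which is the same computation that shows $q$ is preserved.
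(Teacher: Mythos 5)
Your proof is correct and follows essentially the same route as the paper: reduce to $P_\1$ via the isometry $(\gamma,\1)$ and conclude with \eqref{eq:graph id} and \eqref{eq:action graph}, your explicit check that $\langle \alpha X\beta^{-1},\alpha A\beta^{-1}\rangle=\langle X,A\rangle$ being exactly the equivariance of the duality that the paper invokes when it says $(\gamma,\1)$ maps the dual plane of $\1$ to the dual plane of $\gamma$. The only (harmless) difference is that you get the first assertion by normalizing $\det(A)=1$ directly from the spacelikeness criterion, whereas the paper appeals to transitivity of the isometry action on spacelike planes.
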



\subsection{Timelike planes}\label{sec:timelike planes}

Let us now consider a matrix $A\in\mat$ such that $\det(A)=-1$. Hence the plane defined by Equation \eqref{eq:plane} is a timelike totally geodesic plane. Associated with $[A]$ is an orientation-reversing isometry $\eta$ of $\Hyp^2$ . Indeed, the action of $A$ by homography on $\mathbb C\mathrm P^1$ preserves $\RP^1$ and switches the two connected components of the complement, that is, the upper and the lower half-spaces. The matrix $A$ thus induces an orientation-reversing isometry,  up to identifying these two components via $z\mapsto \bar z$. We will thus denote $P_{[A]}$ by $P_{\eta}$, by a small abuse of notation.

The totally geodesic plane $P_{\eta}$ can be parameterized as follows. Consider the map 
\begin{equation}\label{eq:reflections}
\mathcal I\mapsto  \mathcal{I} \eta~,
\end{equation} defined on the space of reflections $\mathcal I$ along geodesics of $\Hyp^2$, with values in $\PSL(2,\R)\cong\AdS^3$. Its image is precisely $P_{\eta}$. Indeed, it is useful to remark that by the Cayley--Hamilton theorem, a matrix $X$ with $\det(X)=-1$ is an involution if and only if  and $\tr(X)=0$. Now, because $\det(A)=-1$, $\mathrm{adj}(A)=-A^{-1}$, and therefore $\langle XA,A\rangle=0$ if and only if $\tr(X)=0$, that is, if and only if $X$ is an involution. This shows that the image of the map \eqref{eq:reflections} is the entire plane $P_{\eta}$.

Similarly to the spacelike case, using the transitivity of the action of the group of isometries on timelike planes, every timelike plane is of the form above. Thanks to this description, we can show the following.

\begin{lemma}\label{lem:timelike planes}
Every timelike totally geodesic plane of $\AdS^3$ is of the form $P_{\eta}$ for some orientation-reversing isometry $\eta$ of $\Hyp^2$, and
$$\partial_\infty P_\eta=\mathrm{graph}(\eta^{-1})\subset \RP^1\times\RP^1~.$$
\end{lemma}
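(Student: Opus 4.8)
The plan is to mirror the argument given for spacelike planes in Lemma~\ref{lem:spacelike planes}, using the parameterization of $P_\eta$ by the map $\mathcal{I}\mapsto\mathcal{I}\eta$ established just above. The first step is the claim that every timelike totally geodesic plane has the form $P_{[A]}$ with $\det(A)=-1$: every totally geodesic plane is of the form $P_{[A]}$ by \eqref{eq:plane}, and timelike means $q(A)=-\det(A)>0$, so we may rescale $A$ so that $\det(A)=-1$. The discussion preceding the lemma then attaches to $[A]$ an orientation-reversing isometry $\eta$ of $\Hyp^2$ and shows that $P_\eta$ is exactly the image of the reflection-composition map $\mathcal{I}\mapsto\mathcal{I}\eta$. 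It remains only to compute the boundary $\partial_\infty P_\eta$.

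For the boundary computation I would use the convergence criterion \eqref{eq:convergence}. Fix a basepoint $z\in\Hyp^2$ and take a sequence of reflections $\mathcal{I}_n$ along geodesics $\ell_n$ escaping to infinity, so that $\gamma_n:=\mathcal{I}_n\eta\in P_\eta$. To identify the limit point $\delta^{-1}(x,y)$ we must track $\gamma_n(z)=\mathcal{I}_n(\eta(z))$ and $\gamma_n^{-1}(z)=\eta^{-1}(\mathcal{I}_n(z))$ (using $\mathcal{I}_n^{-1}=\mathcal{I}_n$). As $\ell_n$ escapes, $\mathcal{I}_n(w)$ tends to a point $p_n\in\partial_\infty\Hyp^2$ which is independent (in the limit) of the compact choice of $w$; write $p_n\to p$. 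Then $\gamma_n(z)\to p$ while $\gamma_n^{-1}(z)\to\eta^{-1}(p)$, so the limiting boundary point is $(p,\eta^{-1}(p))$, i.e.\ a point of $\mathrm{graph}(\eta^{-1})$. Conversely, every $p\in\partial_\infty\Hyp^2$ arises this way by choosing $\ell_n$ appropriately (e.g.\ geodesics with one endpoint converging to $p$), which gives the reverse inclusion and hence $\partial_\infty P_\eta=\mathrm{graph}(\eta^{-1})$. One should take a small amount of care that $\eta$ acts on $\RP^1=\partial_\infty\Hyp^2$ as claimed — it is orientation-reversing as a map of $\Hyp^2$ but still a well-defined homeomorphism of the boundary circle — and that the identification of $\partial_\infty P_\eta$ with a subset of $\RP^1\times\RP^1$ via $\delta$ is consistent with \eqref{eq:convergence}.

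Alternatively, and perhaps more cleanly, one can reduce to a normal form: the isometry group of $\AdS^3$ acts transitively on timelike planes, so it suffices to treat one explicit $\eta$, say the reflection of $\Hyp^2$ fixing the geodesic with endpoints $0,\infty$, compute $\partial_\infty P_\eta$ directly in coordinates, and then transport the result by equivariance of $\delta$ (cf.\ \eqref{eq:action graph}), noting that conjugating $\eta$ by $\alpha$ and composing appropriately changes $\mathrm{graph}(\eta^{-1})$ in the expected way. I expect the main obstacle to be purely bookkeeping: keeping straight the two $\RP^1$ factors and the direction of the graph (i.e.\ that it is $\eta^{-1}$ and not $\eta$), together with the sign/orientation subtleties coming from the $z\mapsto\bar z$ identification used to define $\eta$ from $A$. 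No genuinely hard geometry is involved once the parameterization $\mathcal{I}\mapsto\mathcal{I}\eta$ and the convergence criterion \eqref{eq:convergence} are in hand.
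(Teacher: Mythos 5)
Your proposal is correct and follows essentially the same route as the paper: parameterize $P_\eta$ by $\mathcal I\mapsto\mathcal I\eta$, apply the convergence criterion \eqref{eq:convergence}, and use $(\mathcal I_n\eta)^{-1}=\eta^{-1}\mathcal I_n$ together with continuity of $\eta^{-1}$ on $\overline\Hyp{}^2$ to see that any boundary limit lies on $\mathrm{graph}(\eta^{-1})$ (the paper in fact gets $\mathcal I_n(z)\to x$ directly from the criterion by substituting $\eta^{-1}(z)$, without needing the escaping-geodesics picture). Only one small correction in your reverse inclusion: the geodesics $\ell_n$ must have \emph{both} endpoints converging to $p$, so that the reflections $\mathcal I_n$ leave every compact set; if only one endpoint converges to $p$ and the other stays away, $\mathcal I_n\eta$ converges to an interior point of $\AdS^3$ rather than to a boundary point.
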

\begin{proof}
It only remains to check the identity for $\partial_\infty P_\eta$. For this, we will use the characterization \eqref{eq:convergence} together with the parameterization \eqref{eq:reflections} of $P_\eta$. Suppose the sequence $\mathcal I_n$ is such that $\mathcal I_n\eta(z)\to x\in\partial_\infty\Hyp^2$, for any $z\in\Hyp^2$. Then, using that $\mathcal I_n$ is an involution and the continuity of the action of $\eta$ on $\overline\Hyp{}^2$, $(\mathcal I_n\eta)^{-1}(z)=\eta^{-1}\mathcal I_n^{-1}(z)=\eta^{-1}\mathcal I_n(z)\to\eta^{-1}(x)$. This concludes the proof.
\end{proof}

\begin{remark}
It is worth remarking that, since reflections of $\Hyp^2$ are uniquely determined by (unoriented) geodesics, we can consider the map \eqref{eq:reflections} as a map from the space $\mathcal G(\Hyp^2)$ of unoriented geodesics of $\Hyp^2$ to $\PSL(2,\R)$. It turns out that this map is isometric with respect to a natural metric on $\mathcal G(\Hyp^2)$ which makes it identified with the two-dimensional Anti-de Sitter space $\AdS^2$, see \cite[Example 6.1]{elemamseppi} for more details. 
\end{remark}

\subsection{Lightlike planes}

The only case  left to consider  consists of lightlike totally geodesic planes. Those are of the form $P_{[A]}$ for a nonzero matrix $A$ with $\det(A)=0$, that is, for $\mathrm{rank}(A)=1$. We describe their boundary in the following lemma. It is important to remark that, unlike spacelike and timelike planes considered above, the boundary will not be a graph in $\RP^1\times\RP^1$.
\begin{lemma}\label{lem:lightlike planes}
Every lightlike totally geodesic plane of $\AdS^3$ is of the form $P_{[A]}$ for some rank one matrix $A$, and
$$\partial_\infty P_{[A]}=\left(\mathrm{Im}(A)\times\RP^1\right)\cup\left(\RP^1\times\Ker(A)\right)~.$$
\end{lemma}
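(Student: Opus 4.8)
The plan is to mimic the structure of the proof of Lemma \ref{lem:timelike planes}, using the characterization \eqref{eq:convergence} of convergence to the boundary together with an explicit parameterization of the lightlike plane $P_{[A]}$. The first step is to reduce to a convenient normal form: since the isometry group $\PSL(2,\R)\times\PSL(2,\R)$ of $\AdS^3$ acts on rank one matrices as $A\mapsto \alpha A\beta^{-1}$, hence on pairs $(\mathrm{Im}(A),\Ker(A))$ by the product action on $\RP^1\times\RP^1$, and this product action is transitive, it suffices to prove the identity for one single rank one matrix $A_0$. A natural choice is $A_0=\begin{pmatrix} 0 & 1 \\ 0 & 0\end{pmatrix}$, for which $\mathrm{Im}(A_0)$ is the line spanned by $(1,0)$ (i.e.\ the point $\infty\in\partial_\infty\Hyp^2$) and $\Ker(A_0)$ is the line spanned by $(1,0)$ as well under the chosen identification — more precisely one must be careful with the convention mapping the line spanned by $(a,b)$ to $a/b$, so $\Ker(A_0)=\mathrm{span}(1,0)$ corresponds to $\infty$ too; alternatively take $A_0=\begin{pmatrix}0&0\\1&0\end{pmatrix}$ so that the two boundary lines are $0$ and $\infty$. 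Equivariance of $\delta$, already recorded in the excerpt, then transports the conclusion to every $[A]$.

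The second step is to describe $P_{[A_0]}$ concretely. By \eqref{eq:product}, $\langle X,A_0\rangle=-\tfrac12\tr(X\,\mathrm{adj}(A_0))$, and since $\mathrm{adj}(A_0)=\pm A_0$ for these nilpotent matrices up to sign, the condition $\langle X,A_0\rangle=0$ becomes a single linear condition on the entries of $X$ — concretely, that a specified off-diagonal entry of $X$ vanishes. Intersecting with $\PSL(2,\R)$, we get an explicit two-parameter family of isometries, e.g.\ projective classes of matrices $\begin{pmatrix} a & 0 \\ c & a^{-1}\end{pmatrix}$, which are precisely the parabolic and hyperbolic isometries fixing the point $\infty$ (together with the identity, which fixes every point). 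One then checks that such an isometry $\gamma$ also fixes $\infty$ as the attracting or neutral fixed point, while its other fixed point (for the hyperbolic ones) ranges over all of $\partial_\infty\Hyp^2$.

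The third step is the boundary computation. Take a sequence $\gamma_n$ in $P_{[A_0]}$, so each $\gamma_n$ fixes $\infty\in\partial_\infty\Hyp^2$, and apply \eqref{eq:convergence}: $\gamma_n\to\delta^{-1}(x,y)$ iff $\gamma_n(z)\to x$ and $\gamma_n^{-1}(z)\to y$ for some (equivalently every) $z\in\Hyp^2$. Since $\gamma_n^{-1}$ also fixes $\infty$, I would argue as follows. If $\gamma_n(z)\to x$ with $x\neq\infty$, then $\gamma_n$ cannot be ``escaping'' by pushing mass towards $\infty$, which forces $\gamma_n^{-1}(z)\to\infty$; hence all such limits lie in $\RP^1\times\{\infty\}=\RP^1\times\mathrm{Ker}(A_0)$ — one must double-check the constant-vs.-line conventions here, but the qualitative statement is that one coordinate is pinned. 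Symmetrically, if $\gamma_n(z)\to\infty$ then the other coordinate $y$ can be anything in $\RP^1$, giving the piece $\mathrm{Im}(A_0)\times\RP^1$. Conversely, for any prescribed $(x,y)$ in either piece one exhibits an explicit sequence in $P_{[A_0]}$ realizing it — e.g.\ for $(\infty,y)$ take hyperbolic elements fixing $\infty$ and $y$ with translation length going to infinity in the appropriate direction, and for $(x,\infty)$ the inverses of such. This shows $\partial_\infty P_{[A_0]}$ is exactly the union of the two lines, and equivariance finishes the proof.

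The main obstacle I anticipate is bookkeeping rather than conceptual: getting the identifications straight between a rank one matrix $A$, the pair $(\mathrm{Im}(A),\Ker(A))$, the points of $\partial_\infty\Hyp^2=\R\cup\{\infty\}$ under the convention ``line spanned by $(a,b)\mapsto a/b$'', and the fixed point(s) on $\partial_\infty\Hyp^2$ of the isometries making up $P_{[A]}$ — together with making the ``escaping forces the other coordinate to the fixed point'' argument in \eqref{eq:convergence} fully rigorous (e.g.\ by an explicit computation with the two-parameter family above, which is the cleanest route and avoids any soft compactness argument). A secondary point is to confirm that $\mathrm{adj}(A)$ is proportional to $A$ for rank one $A$ so that the defining equation of $P_{[A]}$ really is the single linear condition claimed; this is immediate from $A\,\mathrm{adj}(A)=\det(A)\1=0$, so for rank one $A$ the matrix $\mathrm{adj}(A)$ has image contained in $\Ker(A)$ and is itself rank one, hence proportional to $A$ exactly when $A$ is symmetric — in general one keeps $\mathrm{adj}(A)$ as is, and the condition $\tr(X\,\mathrm{adj}(A))=0$ is still a single nontrivial linear equation, which is all that is needed.
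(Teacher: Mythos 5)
Your proposal is correct, but it takes a genuinely different route from the paper. The paper's proof is a short, purely algebraic computation with no normal form and no limiting sequences: a point of $\partial_\infty P_{[A]}$ is the projective class of a rank-one matrix $X$ with $\tr(X\,\mathrm{adj}(A))=0$; since $\det(X\,\mathrm{adj}(A))=0$, Cayley--Hamilton converts the trace condition into nilpotency of $X\,\mathrm{adj}(A)$, which for rank-one factors is equivalent to $\mathrm{Im}(\mathrm{adj}(A))=\Ker(X)$ or $\mathrm{Im}(X)=\Ker(\mathrm{adj}(A))$; the identities $\Ker(\mathrm{adj}(A))=\mathrm{Im}(A)$ and $\mathrm{Im}(\mathrm{adj}(A))=\Ker(A)$, coming from $A\,\mathrm{adj}(A)=\mathrm{adj}(A)A=0$, then give exactly the two circles. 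You instead imitate the proof of Lemma \ref{lem:timelike planes}: reduce by equivariance of $\delta$ to a nilpotent normal form $A_0$, identify $P_{[A_0]}$ with the two-dimensional (Borel) subgroup of isometries fixing one point of $\partial_\infty\Hyp^2$, and compute limit points via \eqref{eq:convergence}. This costs more bookkeeping, but it computes $\partial_\infty P_{[A]}$ directly as a set of limits of points of the plane, whereas the paper's argument identifies it with $\mathcal P_{[A]}\cap\partial_\infty\AdS^3$; and your explicit family $z\mapsto a^2z+ab$ does make the ``pinned coordinate'' step rigorous, as you anticipate: if $\gamma_n(i)=a_n^2 i+a_nb_n$ converges in $\R$, then $a_n\to 0$, hence $\gamma_n^{-1}(i)=a_n^{-2}(i-a_nb_n)\to\infty$.

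Three small repairs for the write-up. First, the bookkeeping you flagged really does need fixing: the lower-triangular classes $\left(\begin{smallmatrix} a&0\\ c&a^{-1}\end{smallmatrix}\right)$ fix $0$, not $\infty$, and for either nilpotent choice of $A_0$ one has $\mathrm{Im}(A_0)=\Ker(A_0)$, so the two boundary circles pass through the \emph{same} vertex ($\infty$ for $\left(\begin{smallmatrix}0&1\\0&0\end{smallmatrix}\right)$, $0$ for $\left(\begin{smallmatrix}0&0\\1&0\end{smallmatrix}\right)$); no choice gives ``$0$ and $\infty$''. Second, in the converse direction your hyperbolic sequences realize $(\infty,y)$ and $(x,\infty)$ only for $x,y\neq\infty$; the vertex $(\infty,\infty)=[A_0]$ itself should be realized separately, e.g.\ by parabolics fixing $\infty$, which you already observed lie in $P_{[A_0]}$. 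Third, your closing side remark is false as stated: for rank-one $A$ one has $\mathrm{adj}(A)=\tr(A)\1-A$, which is proportional to $A$ exactly when $\tr(A)=0$ (i.e.\ $A$ nilpotent), not when $A$ is symmetric --- harmless, since, as you say, all that is needed is the single linear equation $\tr(X\,\mathrm{adj}(A))=0$.
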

In other words, $\partial_\infty P_{[A]}$ is the union of two circles in $\RP^1\times\RP^1$, one horizontal and one vertical, which intersect exactly at the point in $\RP^1\times\RP^1$ corresponding to $[A]\in\partial_\infty\AdS^3$ via the map $\delta$ introduced in \eqref{eq:bdy}. 
\begin{proof}
The points in $\partial_\infty P_{[A]}$ are projective classes of rank one matrices $X$ satisfying $\langle X,A\rangle=0$, that is, such that $\tr(X\mathrm{adj}(A))=0$. Since $X\mathrm{adj}(A)$ has vanishing determinant, by the Cayley-Hamilton theorem $X\mathrm{adj}(A)$ is traceless if and only if it is nilpotent, that is, if and only if $X\mathrm{adj}(A)X\mathrm{adj}(A)=0$. Since image and kernel of both $X$ and $\mathrm{adj}(A)$ are all one-dimensional, it is immediate to see that this happens if and only if 
\begin{equation}\label{eq:silly}
\mathrm{Im}(\mathrm{adj}(A))=\Ker(X)\qquad\text{ or }\qquad\mathrm{Im}(X)=\Ker(\mathrm{adj}(A))~.
\end{equation}
Now, since $\det(A)=0$ implies $\mathrm{adj}(A)A=A\mathrm{adj}(A)=0$, the relations $\Ker(\mathrm{adj}(A))=\mathrm{Im}(A)$ and $\mathrm{Im}(\mathrm{adj}(A))=\Ker(A)$ hold. Hence $X\in P_{[A]}$ if and only if $\mathrm{Im}(X)=\mathrm{Im}(A)$ or $\Ker(X)=\Ker(A)$, which concludes the proof, by the definition of $\delta$.
\end{proof}

\section{Convexity notions}

In this section we develop the necessary tools to tackle the proof of Thurston's earthquake theorem. 

\subsection{Affine charts}

The starting point of the proof rests in considering the graph of an orientation-preserving homeomorphism  $f:\RP^1\to\RP^1$ as a subset of $\partial_\infty\AdS^3$, and taking its convex hull. However, the convex hull of a set in projective space can be defined in an affine chart, but $\overline\AdS{}^3$ is not contained in any affine chart. The following lemma serves to show that the convex hull of the graph of $f$ is well-defined.

\begin{lemma} \label{lem:disjoint}
Let $f:\mathbb{RP}^1\to \mathbb{RP}^1$ be an orientation-preserving homeomorphism. Then:
\begin{enumerate}
\item There exists a spacelike plane $P_\gamma$ in $\AdS^3$ such that $\partial_\infty P_\gamma\cap\gr(f)=\emptyset$.
\item Moreover, given any point $(x_0,y_0) \notin\gr(f)$, there exists a spacelike plane $P_\gamma$  such that $\partial_\infty P_\gamma\cap\gr(f)=\emptyset$ \emph{and} $(x_0,y_0)\in\partial_\infty P_\gamma$.
\end{enumerate}
\end{lemma}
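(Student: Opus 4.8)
The plan is to use Lemma~\ref{lem:spacelike planes}, which tells us that spacelike planes are exactly the $P_\gamma$ for $\gamma\in\PSL(2,\R)$, and that $\partial_\infty P_\gamma=\gr(\gamma^{-1})$. So part~(1) amounts to finding an orientation-preserving isometry $g:=\gamma^{-1}$ of $\Hyp^2$ whose graph on $\RP^1$ is disjoint from $\gr(f)$; equivalently, $g(x)\neq f(x)$ for all $x\in\RP^1$. Part~(2) adds the constraint that the prescribed point $(x_0,y_0)$ lies on $\gr(g)$, i.e. $g(x_0)=y_0$, while still $g$ and $f$ never agree. I would first reduce to part~(2) only, since taking any $(x_0,y_0)\notin\gr(f)$ (which exists because $f$ is not surjective onto all of $\RP^1\times\RP^1$) immediately yields part~(1).

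For part~(2): the natural idea is to look for $g$ among the elliptic (or hyperbolic) elements and actually it is cleanest to produce $g$ directly as a carefully chosen homography. Here is the concrete approach. Fix the point $(x_0,y_0)\notin\gr(f)$, so $y_0\neq f(x_0)$. Consider, for $x\in\RP^1$ with $x\neq x_0$, the open interval $J_x\subset\RP^1$ consisting of those potential values $g(x)$: more precisely, since $f$ and $g$ must both be orientation-preserving homeomorphisms sending $x_0\mapsto$ something and with $g(x_0)=y_0$ prescribed, I want to build $g$ as an orientation-preserving homeomorphism of $\RP^1$ with $g(x_0)=y_0$ lying entirely "on one side" of $f$ on the complementary arc. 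Concretely, cut $\RP^1$ at $x_0$: on the arc $\RP^1\setminus\{x_0\}\cong(x_0,x_0)$ (going once around), $f$ restricts to an increasing homeomorphism onto $\RP^1\setminus\{f(x_0)\}$. Since $y_0\neq f(x_0)$, we may choose a homeomorphism $g$ of this arc onto $\RP^1\setminus\{y_0\}$, increasing, such that $g$ lies strictly between $f$ and the "shifted diagonal through $(x_0,y_0)$" — or more simply, such that $g<f$ throughout if $y_0$ is below $f(x_0)$, and $g>f$ throughout if $y_0$ is above. Such a $g$ exists by an elementary interpolation argument for monotone functions on an interval with prescribed (distinct) endpoint behaviour. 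This $g$ extends to an orientation-preserving homeomorphism of $\RP^1$ fixing nothing of $\gr(f)$ and with $g(x_0)=y_0$. Then I would upgrade $g$ to an \emph{isometry}: the set of orientation-preserving homeomorphisms disjoint from $\gr(f)$ and passing through $(x_0,y_0)$ is open in the $C^0$ topology (disjointness from a compact graph is an open condition, and $\PSL(2,\R)$-images through a fixed point form a submanifold), and $\PSL(2,\R)$ acts so that one can find an actual homography approximating $g$ closely enough on a neighbourhood while still interpolating the endpoint — alternatively, directly exhibit the homography: the hyperbolic or parabolic element fixing $x_0$ and a nearby point and translating everything slightly, composed with the unique homography sending $x_0\mapsto y_0$.

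Actually the cleanest route avoids approximation entirely: I would first use transitivity of $\PSL(2,\R)\times\PSL(2,\R)$ on $\partial_\infty\AdS^3$ (equivalently, the double transitivity of $\PSL(2,\R)$ on $\RP^1$, with the equivariance \eqref{eq:action graph}) to normalize the problem. By \eqref{eq:action graph}, replacing $f$ by $\beta f\alpha^{-1}$ replaces $\gr(f)$ by $(\alpha,\beta)\cdot\gr(f)$ and replaces the target plane $P_\gamma$ by $(\alpha,\beta)\cdot P_\gamma$, which is again spacelike; and it moves $(x_0,y_0)$ by $(\alpha,\beta)$. So I may assume $(x_0,y_0)=(0,0)$, say. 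Then I want a homography $g$ with $g(0)=0$ and $g(x)\neq f(x)$ for all $x$; since $f(0)\neq 0=g(0)$, consider $g_t(x)=x/(1+tx)$ (parabolic fixing $0$, or use a hyperbolic one fixing $0$ and $\infty$, $g_s(x)=e^s x$). The graphs $\{\gr(g_s)\}_{s\in\R}$ together with $\gr(\id)$ foliate a neighbourhood of the diagonal in $\RP^1\times\RP^1$ minus the diagonal; since $\gr(f)$ is a compact set not containing $(0,0)$, and $f(0)\neq 0$, for $s$ in a suitable range $\gr(g_s)$ stays in a thin strip around the diagonal avoiding $\gr(f)$ — but this needs $\gr(f)$ to avoid a whole neighbourhood of the diagonal, which is false in general (a generic $f$ crosses the diagonal). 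So the hyperbolic-around-a-fixed-pair family is not enough by itself; instead I use the full $3$-dimensional family of all homographies $g$ with $g(0)=0$ and observe that $\{(x,g(x)):x\in\RP^1\setminus\{0\}, g(0)=0\}$ as $g$ ranges over this $2$-dimensional family \emph{sweeps out} all of $(\RP^1\setminus\{0\})\times\RP^1$ — in fact through each point $(x,y)$ with $x,y\neq 0$ there is a unique $g$ with $g(0)=0,g(x)=y$ plus one more condition to kill, hmm.

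\textbf{The real obstacle and how to handle it.} The genuine difficulty is arranging $g(x)\neq f(x)$ \emph{everywhere} on $\RP^1$, since a homography is rigid (determined by $3$ points) while $f$ is arbitrary, so one cannot hope to "push $g$ off $f$" locally by local perturbations — one must use a global monotonicity/degree argument. The right formulation: after normalizing $(x_0,y_0)=(0,0)$, I claim there is a hyperbolic isometry $g$ with attracting fixed point $0$ and repelling fixed point some $p\neq 0$, with translation length large, such that $\gr(g)$ avoids $\gr(f)$. Indeed, fix any $p\neq 0$ and let $g_s$ be hyperbolic fixing $0$ and $p$ with $g_s\to$ (constant map to $0$) as $s\to+\infty$ uniformly on compacta of $\RP^1\setminus\{p\}$. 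For each $x\in\RP^1\setminus\{0,p\}$ the curve $s\mapsto g_s(x)$ is a monotone homeomorphism from the arc between $x$ and $0$ (not containing $p$) — parametrized by $s\in\R$ it sweeps that whole open arc. Since $f(0)\neq 0$, the point $f(0)$ lies in the interior of exactly one of the two arcs determined by $\{0,p\}$; choose $p$ so that $f(x)$ and the relevant sweeping arc are "on opposite sides" near $0$. Making this precise is exactly the monotone-interpolation argument from the second paragraph, now carried out inside the honest $1$-parameter family $\{g_s\}$: define $S(x)=\{s: g_s(x)=f(x)\}$ and $T=\bigcup_x S(x)$; one shows $T$ is a proper closed subset of $\R$ (using compactness of $\RP^1$ and that near $x=0$, $f(x)$ is bounded away from the sweeping arc because $f(0)\neq0$, while near $x=p$, $f(x)$ stays bounded away from $p=$ the repelling point so $g_s(x)$, which tends to $0$, eventually differs from it), and then any $s\notin T$ gives the desired isometry $g=g_s$, whence $\gamma=g^{-1}$ and $P_\gamma$ works, with $(0,0)=(x_0,y_0)\in\partial_\infty P_\gamma=\gr(g)$. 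The main work — and the step I expect to be the true obstacle — is proving $T\neq\R$, i.e. the compactness-plus-boundary-behaviour argument showing the "bad" parameter set is not everything; everything else is normalization via \eqref{eq:action graph} and bookkeeping.
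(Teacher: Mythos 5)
Your overall strategy (normalize the marked point via the $\PSL(2,\R)\times\PSL(2,\R)$-action, then exhibit an explicit isometry through it whose graph misses $\gr(f)$) is in the same spirit as the paper's, and your reduction of (1) to (2) is fine. But the actual argument you end with has a genuine gap: the key claim that, for your choice of $p$, the bad parameter set $T=\bigcup_x S(x)$ is a proper subset of $\R$ is never proved, and it is false for a general admissible $p$. Two concrete problems. First, you never impose the (necessary) condition $f(p)\neq p$; if $f(p)=p$ then $(p,p)\in\gr(f)\cap\gr(g_s)$ for every $s$ and $T=\R$. Second, and more seriously, even with $f(p)\neq p$ the conclusion fails unless $p$ is chosen using \emph{global} information about $f$, which your criterion ("$f$ and the sweeping arc on opposite sides near $0$") does not provide: for $x$ close to the marked point that condition is automatic for every $p$, so it does not constrain $p$ at all. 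Here is a counterexample to the claim as written. Conjugate so the marked point is $(\infty,\infty)$, so you seek $g$ affine, $g_s(x)=e^s(x-q)+q$, fixing $\infty$ and some $q\in\R$, with $\gr(g_s)\cap\gr(f)=\emptyset$. Take $f$ with $f(\infty)=0$, $f(0)=\infty$, $f(1)=2$, and suppose $q=1$ (note $f(1)\neq 1$, and your "near the marked point" criterion is vacuously satisfied). On the arc $(0,+\infty)$ the difference $g_s-f$ tends to $+\infty$ at both ends (since $f\to-\infty$ at $0^+$ and $f\to 0$ at $+\infty$), while $g_s(1)-f(1)=1-2<0$ for \emph{every} $s$; by the intermediate value theorem every $g_s$ crosses $\gr(f)$, i.e.\ $T=\R$. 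So the one-parameter sweep with an essentially arbitrary second fixed point cannot work; the second datum must be adapted to $f$ (in this normalization the natural requirement is that the candidate graph pass through the point $(f^{-1}(\infty),f(\infty))$, after which \emph{any} slope works). Your earlier attempts (monotone interpolation plus "upgrading" a homeomorphism to a homography by approximation) are not proofs either, as you acknowledge: disjointness from $\gr(f)$ is open, but $\PSL(2,\R)$ is rigid and there is no reason an approximating homography exists. Incidentally, your description of the orbit is also off: for fixed $x$, $\{g_s(x)\}_{s\in\R}$ sweeps the whole open arc between the two fixed points containing $x$, not just the sub-arc from $x$ to $0$.

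For comparison, the paper closes exactly this gap by a preliminary topological step: given $(x_0,y_0)\notin\gr(f)$, it finds $x,x'$ such that $f$ maps the arc from $x$ to $x'$ containing $x_0$ onto the arc from $f(x)$ to $f(x')$ \emph{not} containing $y_0$ (seen by intersecting $\gr(f)$ with the graph of an orientation-reversing homeomorphism through the point, which meets $\gr(f)$ in exactly two points). It then uses simple transitivity on oriented triples, on both factors, to normalize $(x,x_0,x')=(0,1,\infty)$ and $(f(x),y_0,f(x'))=(0,-1,\infty)$, and takes the explicit order-two elliptic $\gamma=\mathcal R_i$: since $f$ then preserves each of the arcs $(0,\infty)$ and $(-\infty,0)$ while $\mathcal R_i$ swaps them and sends $1\mapsto -1$, its graph avoids $\gr(f)$ and contains $(x_0,y_0)$. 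That two-sided normalization is precisely the $f$-dependent global choice your sweep argument is missing; to repair your proof you would need to specify the second fixed point (or the threading point) in terms of $f$, e.g.\ force the candidate graph through $(f^{-1}(y_0),f(x_0))$ as well as $(x_0,y_0)$, and then the verification becomes an elementary monotonicity check much like the paper's.
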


Before providing the proof, let us discuss an important consequence of the first item. Given a (spacelike) plane $P_\gamma$ in $\AdS^3$, let $\mathcal P_\gamma$ be the unique projective subspace in $\pmat$ that contains $P_\gamma$, which is defined by the equation \eqref{eq:plane} (where $\gamma=[A])$. Let us denote by $\mathcal A_\gamma$ the complement of $\mathcal P_\gamma$, which we will call a \emph{(spacelike) affine chart}. The first item of Lemma \ref{lem:disjoint} can be reformulated as follows:

\begin{corollary} \label{cor:disjoint}
Let $f:\mathbb{RP}^1\to \mathbb{RP}^1$ be an orientation-preserving homeomorphism. There exists a spacelike affine chart $\mathcal A_\gamma$ containing $\gr(f)$.
\end{corollary}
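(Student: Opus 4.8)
The plan is to prove Lemma~\ref{lem:disjoint}, from which Corollary~\ref{cor:disjoint} follows immediately: once we have a spacelike plane $P_\gamma$ with $\partial_\infty P_\gamma\cap\gr(f)=\emptyset$, the graph of $f$ is a compact subset of $\overline{\AdS}{}^3$ disjoint from the projective hyperplane $\mathcal P_\gamma$, hence contained in the affine chart $\mathcal A_\gamma=\pmat\setminus\mathcal P_\gamma$.

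First I would translate the condition $\partial_\infty P_\gamma\cap\gr(f)=\emptyset$ into hyperbolic-geometric terms. By Lemma~\ref{lem:spacelike planes}, $\partial_\infty P_\gamma=\gr(\gamma^{-1})$, so we need an isometry $\gamma$ of $\Hyp^2$ such that the graphs of $f$ and of $\gamma^{-1}$ in $\RP^1\times\RP^1$ are disjoint, i.e.\ $f(x)\neq\gamma^{-1}(x)$ for every $x\in\RP^1$, equivalently $\gamma\circ f$ has no fixed point on $\RP^1$. So the statement to prove becomes: for every orientation-preserving homeomorphism $f$ of the circle there is an element $\gamma\in\PSL(2,\R)$ such that $\gamma\circ f$ is fixed-point free on $\partial_\infty\Hyp^2$; and, for part~(2), given $(x_0,y_0)\notin\gr(f)$ one can moreover require $\gamma^{-1}(x_0)=y_0$, i.e.\ prescribe the value $\gamma(y_0)=x_0$.

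For the existence of such a $\gamma$, the natural choice is to take $\gamma^{-1}$ to be a hyperbolic (loxodromic) element whose translation axis has endpoints very close together, so that $\gamma^{-1}$ is $C^0$-close to a constant map sending almost all of $\RP^1$ to a single point $p$. More precisely, fix an attracting fixed point $p$ and a repelling fixed point $p'$ of $\gamma^{-1}$ with $p'$ chosen so that $f(p')\neq p$ (possible since $f$ is a bijection, so at most one point is mapped to $p$), and let the translation length tend to infinity. Then outside an arbitrarily small neighbourhood $V$ of $p'$, $\gamma^{-1}$ maps everything into an arbitrarily small neighbourhood $W$ of $p$; choosing $W$ small enough that $W$ contains no point of $f(\RP^1\setminus V)$ — which we arrange by first fixing $V$ so small that $p\notin f(\overline V)$, then shrinking — we get $\gamma^{-1}(x)\neq f(x)$ for $x\notin V$, and on $V$ we have $\gamma^{-1}(x)$ near $p'$ while $f(x)$ is near $f(p')\neq p'$... — here one has to be a little careful, and it is cleaner to choose $p'$ itself to be a point with $f(p')\neq p'$ (a \emph{generic} point, since $f$ being a homeomorphism of the circle distinct from the identity, or after a harmless conjugation, has non-full fixed-point set — and if $f=\id$ one takes $\gamma^{-1}$ any fixed-point-free elliptic rotation). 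I expect this compactness/continuity bookkeeping — making the two small neighbourhoods interact correctly — to be the main technical obstacle, though it is entirely elementary.

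For part~(2), the refinement is to produce $\gamma$ with the additional constraint $\gamma(y_0)=x_0$. The hypothesis $(x_0,y_0)\notin\gr(f)$ says exactly $f(y_0)\neq x_0$, i.e.\ $y_0$ is not a fixed point of $\gamma\circ f$ for the desired $\gamma$, so the constraint is consistent with fixed-point-freeness. I would run the same construction but now require the loxodromic $\gamma^{-1}$ to pass through the prescribed pair: take the attracting fixed point $p$ and repelling fixed point $p'$ of $\gamma^{-1}$ on either side of $x_0$, close together and close to $x_0$; then $\gamma^{-1}$ is close to the constant map $x_0$ away from $p'$, and one adjusts the translation length and the precise position of $p,p'$ (keeping $p'$ away from $y_0$ and from $f^{-1}(\text{neighbourhood of }x_0)$) so that $\gamma^{-1}(y_0)=x_0$ holds exactly while fixed-point-freeness is preserved; this is possible because, as the translation length varies, $\gamma^{-1}(y_0)$ sweeps monotonically along the arc from $p'$ toward $p$, hence takes the value $x_0$, by the intermediate value theorem. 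Finally, reformulating: $\gamma^{-1}(y_0)=x_0$ means $(x_0,y_0)\in\gr(\gamma^{-1})=\partial_\infty P_\gamma$, which is the second assertion, and Corollary~\ref{cor:disjoint} is then just item~(1) together with the observation above that disjointness of a compact set from a projective hyperplane places it in the complementary affine chart.
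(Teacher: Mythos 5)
Your reduction of Corollary \ref{cor:disjoint} to item (1) of Lemma \ref{lem:disjoint}, and the translation of that item into ``$\gamma\circ f$ is fixed-point free on $\RP^1$'', are both correct and match the paper. The gap is in the construction of $\gamma$. You take $\gamma^{-1}$ hyperbolic with repelling fixed point $p'$, attracting fixed point $p$, large translation length, and you impose $f(p')\neq p$ together with ``$p\notin f(\overline V)$'' for a small neighbourhood $V$ of $p'$. These are exactly the wrong conditions, and with them the graphs must intersect. First, your requirement that $W$ contain no point of $f(\RP^1\setminus V)$ is unachievable once $p\notin f(\overline V)$: that hypothesis forces $p\in f(\RP^1\setminus\overline V)\subseteq f(\RP^1\setminus V)$, so \emph{every} neighbourhood $W$ of $p$ contains the point $p$ of $f(\RP^1\setminus V)$. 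Second, and more decisively: lift to $\R$ with $p'=0$, set $h=\tilde f-\tilde\gamma^{-1}$ with $\tilde f(0)\in(0,1)$; the graphs are disjoint iff $h$ avoids $\Z$. For large translation length $\tilde\gamma^{-1}$ is a near-staircase, essentially constant equal to $p$ on $[\epsilon,1-\epsilon]$, so there $h$ is approximately $\tilde f(x)-p$ and sweeps an interval of length close to $1$, roughly $[\tilde f(0)-p,\,\tilde f(0)+1-p]$, while $h(0)=\tilde f(0)\in(0,1)$. Unless $\tilde f(0)=p$, i.e.\ $f(p')=p$, one checks (in either case $p\gtrless\tilde f(0)$) that $h$ is forced to cross $0$ or $1$, producing a solution of $f(x)=\gamma^{-1}(x)$. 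So the concentration method can only succeed when the axis of $\gamma^{-1}$ joins $p'$ to $f(p')$ with $f(p')$ attracting --- the limiting lightlike ``cross'' must have its corner \emph{on} $\gr(f)$, consistently with Proposition \ref{prop:support planes space or light} --- and even then the local crossing at the corner has to be checked. Your condition $f(p')\neq p$ excludes precisely the only configuration that can work; the same defect propagates into your argument for item (2).

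For comparison, the paper sidesteps all limiting analysis: normalize $f$ by post-composition with an isometry (harmless by \eqref{eq:action graph}) so that $f(0)=0$ and $f(\infty)=\infty$; then $f$ preserves each of the arcs $(-\infty,0)$ and $(0,\infty)$, while the elliptic involution $\gamma=\mathcal R_i$ swaps them and exchanges $0$ and $\infty$, whence $f(x)\neq\gamma(x)=\gamma^{-1}(x)$ for every $x$. For item (2) the paper again uses an elliptic involution after normalizing $(x,x_0,x')\mapsto(0,1,\infty)$ and $(f(x),y_0,f(x'))\mapsto(0,-1,\infty)$. If you want to salvage your approach, either repair it as indicated (axis from $p'$ to $f(p')$) or switch to the elliptic normalization.
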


The proof of Lemma \ref{lem:disjoint} below is largely inspired by \cite[Lemma 6.2, Lemma 6.3]{bonschkra}.

\begin{proof}[Proof of Lemma \ref{lem:disjoint}]
Clearly the second item implies the first. However, we will first prove the first item, and then explain how to improve the proof to achieve the second item.

Recall that $\PSL(2,\R)$ acts transitively on pairs of distinct points of $\RP^1\cong\R\cup\{\infty\}$ --- actually, it acts \emph{simply} transitively on \emph{positively oriented triples}. Hence for the first point we may assume, up to the action of the isometry group of $\AdS^3$ by post-composition on $f$ (recall \eqref{eq:action graph}), that $f(0)=0$ and $f(\infty)=\infty$. Then $f$ induces a monotone increasing homeomorphism from $\R$ to $\R$. Since $f(0)=0$, $f$ preserves the two intervals $(-\infty,0)$ and $(0,\infty)$. Let now $\gamma=\mathcal R_i$ be the order--two elliptic isometry fixing $i$. Clearly $\gamma$ is an involution that maps $0$ to $\infty$, and switches the two intervals $(-\infty,0)$ and $(0,\infty)$. Hence $f(x)\neq\gamma(x)$ for all $x\in\R\cup\{\infty\}$, that is, $\gr(f)\cap\gr(\gamma)=\emptyset$. By Lemma \ref{lem:spacelike planes} and the fact that $\gamma$ is an involution,
$\gr(f)\cap \partial_{\infty}P_{\gamma}=\emptyset$.

To prove the second item, we will make full use of the transitivity of the $\PSL(2,\R)$-action on oriented triples, and we will apply both pre and post-composition of an element of $\PSL(2,\R)$. As a preliminary step, let $(x_0,y_0)\notin\gr(f)$, and observe that we can find points $x$ and $x'$ such that $f$ maps the unoriented arc of $\RP^1$ connecting $x$ and $x'$ containing $x_0$ to the unoriented arc connecting $f(x)$ and $f(x')$ \emph{not} containing $y_0$. The proof is just a picture, see Figure \ref{fig:graphs}.  Since $f$ preserves the orientation of $\RP^1$, up to switching $x$ and $x'$, we have that $(x,x_0,x')$ is a positive triple in $\RP^1$, while $(f(x),y_0,f(x'))$ is a negative triple. 

Having made this observation, using simple transitivity on oriented triples we can assume $(x,x_0,x')=(0,1,\infty)$ and $(f(x),y_0,f(x'))=(0,-1,\infty)$. Then the choice $\gamma=\mathcal R_i$ as in the first part of the proof satisfies the condition in the second item as well, since $\gamma(1)=-1$.
\end{proof}

\begin{figure}[htb]
\centering
\includegraphics[scale=.65]{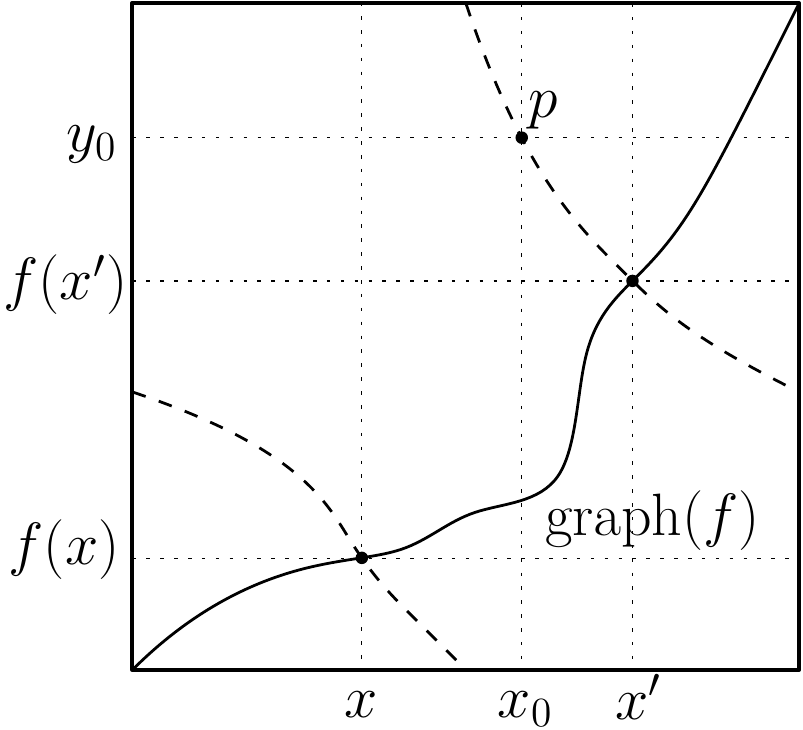}
\caption{The proof of a claim in Lemma \ref{lem:disjoint}, drawn in the torus $\RP^1\times\RP^1$ (identify opposite sides by a translation). Given $p\notin\gr(f)$, consider any orientation-reversing homeomorphism $g$ of $\RP^1$. Then $\gr(f)$ and $\gr(g)$ (dashed) intersect in two points, and let $x,x'$ the corresponding solutions of the equation $f=g$. Then $f$ maps an arc from $x$ to $x'$ containing $x_0$, to an arc from $f\!(x)$ to $f\!(x')$ not containing $y_0$.}
\label{fig:graphs}       
\end{figure}

\subsection{Convex hulls}

Corollary \ref{cor:disjoint} permits to consider the convex hull of $\gr(f)$, in any affine chart $\mathcal A_\gamma$ that contains $\gr(f)$.

\begin{example}\label{ex:plane}
Given $\sigma\in\PSL(2,\R)$, the convex hull of $\gr(\sigma)$ is the closure of the totally geodesic spacelike plane $P_{\sigma^{-1}}$ in $\AdS^3$. Indeed by Lemma \ref{lem:spacelike planes} the boundary at infinity of $P_{\sigma^{-1}}$ equals $\gr(\sigma)$, and moreover $P_{\sigma^{-1}}$  is convex, since spacelike geodesics of $\AdS^3$ (which are the intersections of two  transverse spacelike planes) are lines in an affine chart, and any two points in $\partial_\infty\Hyp^2$ are connected by a geodesic. Hence $P_{\sigma^{-1}}$ is clearly the smallest convex set containing $\gr(\sigma)$. 

This is the only case in which $\gr(f)$ is contained in a plane, and therefore its convex hull has empty interior. If $f$ is not the restriction to $\RP^1$ of an element of $\PSL(2,\R)$, then the convex hull of $\gr(f)$ is a convex body in the affine chart  $\mathcal A_\gamma$.
\end{example}

Let us study one more important property of the convex hull of $\gr(f)$.

\begin{proposition} \label{prop:intersection bdy}
Let $f:\mathbb{RP}^1\to \mathbb{RP}^1$ be an orientation-preserving homeomorphism, let   $P_\gamma$ in $\AdS^3$ be a spacelike plane such that $\partial_\infty P_\gamma\cap\gr(f)=\emptyset$, and let $K$ be the convex hull of $\gr(f)$ in the affine chart $\mathcal A_\gamma$. Then:
\begin{enumerate}
    \item[\textbullet] The interior of $K$ is contained in $\AdS^{3}$.
    \item[\textbullet] The intersection of $K$ with $\partial_\infty\AdS{}^{3}$ equals $\gr(f)$.
\end{enumerate}
In particular, $K\subset\overline\AdS{}^3$.
\end{proposition}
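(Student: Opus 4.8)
The plan is to work entirely inside the spacelike affine chart $\mathcal A_\gamma$ (which exists by Corollary \ref{cor:disjoint}, and which we may assume contains $\gr(f)$), and to split $\overline\AdS{}^3\cap \mathcal A_\gamma$ into the two pieces $\AdS^3\cap\mathcal A_\gamma$ and $\partial_\infty\AdS^3\cap\mathcal A_\gamma$, using that in the affine chart these are explicitly cut out, respectively, by a quadratic inequality $q<0$ and by the quadric $q=0$ (after normalizing $\gamma$ to be, say, the identity so that $\mathcal P_\1$ is the hyperplane $\{\tr = 0\}$ and $\mathcal A_\1$ is the affine slice $\{\tr = 1\}$, on which $q$ restricts to a genuine affine-quadratic function of signature making $\{q<0\}$ a convex body — indeed this slice is exactly the standard affine picture of $\AdS^3$ as the interior of a quadric, and $\partial_\infty\AdS^3\cap\mathcal A_\1$ is that quadric).

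For the first bullet point I would argue as follows. The set $\AdS^3\cap\mathcal A_\1=\{q<0\}$ is a convex open subset of the affine chart (this is the classical "one-sheeted region", and convexity is just the statement that a quadratic form of the appropriate signature has convex sublevel set $\{q<0\}$ in this affine slice — or one may invoke that spacelike geodesics are affine segments, as in Example \ref{ex:plane}). Since $\gr(f)$ lies on the boundary quadric $\{q=0\}$, its convex hull $K$ is contained in the closed convex set $\overline{\{q<0\}}=\{q\le 0\}\cap\mathcal A_\1$; hence $K\subset\overline\AdS{}^3$, which is the final "In particular" assertion. To upgrade this to: the \emph{interior} of $K$ is contained in $\AdS^3$, I would show that no interior point of $K$ lies on $\{q=0\}$. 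Here is where convexity does the real work: if $p$ were an interior point of $K$ lying on the quadric, then since $K\subseteq\{q\le 0\}$ the quadric $\{q=0\}$ would be a supporting hypersurface for $K$ at $p$, forcing $K$ (hence $\gr(f)$, being in the closure of $K$) to lie inside a single supporting hyperplane of $\{q\le 0\}$ at $p$; but a supporting hyperplane of the convex body $\{q\le 0\}$ at a boundary point $p$ of the quadric meets the quadric only at $p$, so $\gr(f)$ would be the single point $p$, contradicting that $f$ is a homeomorphism (its graph is a full circle). I expect this supporting-hyperplane argument — making precise that $\{q=0\}$ is "strictly convex" in the affine chart, i.e. that its tangent hyperplane at any point touches it only there — to be the main technical point; it is true but needs either a direct computation with the quadratic form or a clean reference to strict convexity of the quadric.

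For the second bullet point, the inclusion $\gr(f)\subseteq K\cap\partial_\infty\AdS^3$ is immediate since $\gr(f)\subseteq K$ and $\gr(f)\subseteq\partial_\infty\AdS^3$. For the reverse inclusion, suppose $p\in K\cap\partial_\infty\AdS^3$, so $q(p)=0$ and $p\in K$. By the first bullet, $p$ is not in the interior of $K$, so $p$ is a boundary point of the convex body $K$ and admits a supporting hyperplane $H$ of $K$ in $\mathcal A_\1$. Since $K\subseteq\{q\le 0\}$ and $p\in\{q=0\}$, again $\{q=0\}$ supports $K$ at $p$, and by the same strict-convexity fact the supporting hyperplane of $\{q\le 0\}$ at $p$ meets $\{q=0\}$ only at $p$; thus $K\cap\partial_\infty\AdS^3\subseteq K\cap(\text{that hyperplane})\cap\{q=0\}=\{p\}$ locally. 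To conclude globally that $p\in\gr(f)$, I would use that $K$ is the convex hull of $\gr(f)$: every point of $K$ is a (finite, by Carathéodory in the affine chart) convex combination of points of $\gr(f)$, and if such a combination lands on the quadric $\{q=0\}$ while all the vertices also lie on $\{q=0\}$, then by strict convexity of $\{q\le0\}$ all the vertices must coincide, so $p\in\gr(f)$. This last step — that a convex combination of points of a strictly convex sphere which itself lies on the sphere must be a trivial combination — is the natural partner of the obstacle identified above, and once strict convexity of the affine quadric is in hand, both bullets follow quickly.
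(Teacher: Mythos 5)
Your proposal rests on a false premise, and the error is fatal to both bullet points. You assert that in the affine slice $\{\tr=1\}$ the set $\AdS^3\cap\mathcal A_\1=\{q<0\}$ is convex and that the quadric $\{q=0\}=\partial_\infty\AdS^3\cap\mathcal A_\1$ is strictly convex. This is the Klein-model picture of \emph{hyperbolic} space, not of Anti-de Sitter space. Here $q$ has signature $(2,2)$ on $\mat$, so $\partial_\infty\AdS^3$ is a doubly ruled quadric: writing $X=\left(\begin{smallmatrix} a & b\\ c & 1-a\end{smallmatrix}\right)$ and changing coordinates $u=a-\tfrac12$, $v=\tfrac{b+c}{2}$, $w=\tfrac{b-c}{2}$, one gets $q=u^2+v^2-w^2-\tfrac14$, a one-sheeted hyperboloid; the points $(u,v,w)=(10,0,\pm 10.1)$ lie in $\{q<0\}$ while their midpoint $(10,0,0)$ has $q>0$, so $\{q\le 0\}$ is not convex. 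Equivalently, the rulings $\{x\}\times\RP^1$ and $\RP^1\times\{y\}$ are projective \emph{lines} contained in the quadric (they are the projectivizations of the two-dimensional linear subspaces of rank-one matrices with fixed image, resp.\ kernel), so the quadric is nowhere strictly convex, and by Lemma \ref{lem:lightlike planes} the tangent (lightlike) plane at a boundary point meets $\partial_\infty\AdS^3$ in two full circles, not in a single point. Consequently both of your key deductions fail: a supporting hyperplane of $\{q\le0\}$ at a quadric point does \emph{not} meet the quadric only there, and a nontrivial convex combination of quadric points can perfectly well lie on the quadric (take three collinear points on one ruling). So neither the claim $\mathrm{int}(K)\subset\AdS^3$ nor $K\cap\partial_\infty\AdS^3=\gr(f)$ can be extracted from convexity of the ambient region, because that region is not convex.

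The failure is not merely technical: any argument that only uses the fact that $\gr(f)$ lies on the quadric must be wrong, since the statement genuinely needs $f$ to be an orientation-preserving homeomorphism. For instance, if $\eta$ is orientation-reversing, $\gr(\eta^{-1})=\partial_\infty P_\eta$ lies on the quadric but its convex hull leaves $\overline\AdS{}^3$: in the slice $\{a=d=\tfrac12\}$ the curve is the hyperbola $bc=\tfrac14$, and the midpoint of the two points $(b,c)=(t,\tfrac1{4t})$ and $(\tfrac1{4t},t)$ has $\det<0$ for $t>1$. This is exactly why the paper's proof proceeds differently: for the second bullet it uses the second item of Lemma \ref{lem:disjoint} (which uses the orientation-preserving hypothesis) to find, through any point $p\in\partial_\infty\AdS^3\setminus\gr(f)$, a \emph{spacelike} plane whose closure misses $\gr(f)$, so that the corresponding affine hyperplane separates $p$ from $K$; and for the first bullet it uses Lemma \ref{lem:intersection interior} to see that chords of $\gr(f)$ are spacelike geodesics contained in $\AdS^3$, and then a connectedness argument to place $K$ in $\overline\AdS{}^3$ with interior in $\AdS^3$. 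If you want to salvage your approach, you would have to replace ``strict convexity of the quadric'' by precisely this kind of separation-by-spacelike-planes statement, which is where the actual content (and the hypothesis on $f$) enters.
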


Before proving Proposition \ref{prop:intersection bdy}, we give another technical lemma, which is proved by an argument in a similar spirit as the proof of Lemma \ref{lem:disjoint}.

\begin{lemma}\label{lem:intersection interior}
Let $f:\mathbb{RP}^1\to \mathbb{RP}^1$ be an orientation-preserving homeomorphism and let   $P_\gamma$ in $\AdS^3$ be a spacelike plane such that $\partial_\infty P_\gamma\cap\gr(f)=\emptyset$. Given any two distinct points $(x,f(x))$ and $(x',f(x'))$ in $\gr(f)$, there exists a spacelike plane, disjoint from $P_\gamma$, containing them in its boundary at infinity. 
\end{lemma}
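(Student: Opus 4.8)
The plan is to follow the strategy of Lemma~\ref{lem:disjoint}: normalize the configuration by an isometry of $\AdS^3$, exhibit the plane essentially explicitly, and isolate the one a priori possible configuration that has to be excluded by a separate argument. First, since $x\neq x'$ and $f(x)\neq f(x')$, I would choose $\alpha,\beta\in\PSL(2,\R)$ with $\alpha(x)=0,\ \alpha(x')=\infty$ and $\beta(f(x))=0,\ \beta(f(x'))=\infty$, and apply the isometry $(\alpha,\beta)$ of $\AdS^3$. By \eqref{eq:action graph} this replaces $f$ by $\beta f\alpha^{-1}$, which fixes $0$ and $\infty$, replaces $P_\gamma$ by another spacelike plane still disjoint at infinity from $\gr(f)$, and sends the two given points to $(0,0)$ and $(\infty,\infty)$. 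So we may assume $f(0)=0$, $f(\infty)=\infty$, and we look for a spacelike plane $P_\sigma$ with $(0,0),(\infty,\infty)\in\partial_\infty P_\sigma$ and $P_\sigma\cap P_\gamma=\emptyset$. By Lemma~\ref{lem:spacelike planes}, $\partial_\infty P_\sigma=\gr(\sigma^{-1})$; the first condition therefore says exactly that $\sigma^{-1}$ fixes $0$ and $\infty$, i.e.\ $\sigma^{-1}=\mathrm{diag}(\mu,\mu^{-1})$ for some $\mu>0$ (with $\mu=1$ giving $\sigma=\1$). For the second condition I would use that two distinct spacelike planes are either disjoint or meet transversally along a spacelike geodesic, whose two endpoints then lie in both boundaries at infinity (this follows from the projective description of $P_{[A]}$ and the signature considerations of Section~\ref{sec:ads}); hence $P_\sigma\cap P_\gamma=\emptyset$ as soon as $\gr(\sigma^{-1})\cap\gr(\gamma^{-1})=\emptyset$, which amounts to $\gamma\sigma^{-1}$ having no fixed point on $\RP^1$, i.e.\ being a non-trivial elliptic isometry.

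Thus the problem becomes: find $\mu>0$ such that $\gamma\cdot\mathrm{diag}(\mu,\mu^{-1})$ is a non-trivial elliptic. Write $\gamma=\begin{pmatrix}a&b\\c&d\end{pmatrix}\in\SL(2,\R)$. Since $(0,0),(\infty,\infty)\in\gr(f)$, the hypothesis $\partial_\infty P_\gamma\cap\gr(f)=\emptyset$ forces $\gamma^{-1}(0)\neq0$ and $\gamma^{-1}(\infty)\neq\infty$, from which one checks $b\neq0$ and $c\neq0$. Now $\tr\big(\gamma\cdot\mathrm{diag}(\mu,\mu^{-1})\big)=a\mu+d\mu^{-1}$, and this quantity lies in $(-2,2)$ for a suitable $\mu>0$ precisely when $ad<1$: indeed, if $a,d$ have opposite signs (or one of them vanishes) the function $\mu\mapsto a\mu+d\mu^{-1}$ takes the value $0$ or has infimum $0$ in absolute value, and if $a,d$ have the same sign its minimum absolute value is $2\sqrt{ad}$, which is $<2$ iff $ad<1$. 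When $ad<1$ we are done. If instead $ad\geq1$, then $bc=ad-1\geq0$ and $bc\neq0$ give $bc>0$ and $ad>1$; hence $a,d$ are nonzero of the same sign, $(a+d)^2\geq4ad>4$, so $\gamma$ (and $\gamma^{-1}$) is hyperbolic, and its two fixed points solve $cz^2+(d-a)z-b=0$, have product $-b/c<0$ and are distinct from $0$ and $\infty$; thus they are two real numbers $z_-<0<z_+$.

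It remains to show that this last configuration is incompatible with the hypothesis. I would argue on lifts. Identify $\RP^1$ with $\R/\Z$ by an orientation-preserving homeomorphism sending $0$ to $0$ and $\infty$ to $1/2$, and lift $f$ and $\gamma^{-1}$ to increasing homeomorphisms $\tilde f,\widetilde{\gamma^{-1}}$ of $\R$ commuting with $t\mapsto t+1$, normalized so that $\tilde f(0)=0$ and $\widetilde{\gamma^{-1}}(\zeta_+)=\zeta_+$, where $\zeta_\pm\in(-1/2,1/2)$ represent $z_\pm$, so $\zeta_-<0<\zeta_+$. Then $\tilde f(0)=0$ and $\tilde f(1/2)=1/2$ because $f$ fixes $0$ and $\infty$; one checks that $\widetilde{\gamma^{-1}}$ also fixes $\zeta_-$, so that $\zeta_-$ and $\zeta_+$ are consecutive fixed points, and since $\gamma^{-1}$ is hyperbolic the displacement $t\mapsto\widetilde{\gamma^{-1}}(t)-t$ has opposite (constant, nonzero) signs on the two arcs $(\zeta_-,\zeta_+)$ and $(\zeta_+,\zeta_-+1)$. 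As $0$ lies in the first arc and $1/2$ in the second, the numbers $\tilde f(0)-\widetilde{\gamma^{-1}}(0)=-(\widetilde{\gamma^{-1}}(0)-0)$ and $\tilde f(1/2)-\widetilde{\gamma^{-1}}(1/2)=-(\widetilde{\gamma^{-1}}(1/2)-1/2)$ have opposite signs. On the other hand, $\gr(f)\cap\gr(\gamma^{-1})=\emptyset$ means exactly that the continuous $\Z$-periodic function $t\mapsto\tilde f(t)-\widetilde{\gamma^{-1}}(t)$ never takes an integer value, so it stays in a single interval $(N,N+1)$, which cannot contain two numbers of opposite sign. This contradiction rules out the last case; in every remaining case we have produced a spacelike plane $P_\sigma$ with the required properties, and undoing the isometry $(\alpha,\beta)$ yields the plane for the original data.

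The main obstacle is the last paragraph: converting the global disjointness hypothesis into the usable statement that the difference of the two lifts has constant integer part, and extracting enough information from the hyperbolic dynamics of $\widetilde{\gamma^{-1}}$ near its two fixed points (which straddle $0$, the fixed point of $f$) to contradict it. The two-spacelike-planes lemma used in the first paragraph is routine and, if needed, can be proved on the spot from the signature of $q$ restricted to $\mathrm{span}(A,B)$.
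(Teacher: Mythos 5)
Your proof is correct, and it takes a genuinely different route from the paper's. The paper normalizes the \emph{plane}, reducing to $\gamma=\1$ so that the hypothesis becomes ``$f$ has no fixed point'', and then produces an elliptic $\sigma$ with $\sigma(x)=f(x)$ and $\sigma(x')=f(x')$ by a synthetic case analysis on the cyclic order of $x,f(x),x',f(x')$: the fixed-point-free hypothesis excludes exactly the one ordering for which no such elliptic exists, and in each remaining configuration $\sigma$ is exhibited as a rotation about the intersection point of two explicit geodesics. You instead normalize the \emph{two points} to $(0,0)$ and $(\infty,\infty)$, so the candidate planes form the one-parameter family $\sigma^{-1}=\mathrm{diag}(\mu,\mu^{-1})$, and disjointness from $P_\gamma$ becomes ellipticity of $\gamma\sigma^{-1}$, i.e.\ $|a\mu+d\mu^{-1}|<2$ for some $\mu>0$; the residual obstruction $ad\geq 1$ is then killed by showing $\gamma$ would be hyperbolic with fixed points separating $0$ from $\infty$, which together with $f(0)=0$, $f(\infty)=\infty$ forces $\gr(f)\cap\gr(\gamma^{-1})\neq\emptyset$ by an intermediate value argument on lifts. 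Both proofs rest on the same duality (disjointness of two spacelike planes follows from the quotient of their dual points being elliptic --- this is the content of Lemma \ref{lemma intersection two planes}, which the paper states only later but which is independent of the present lemma), and both spend the hypothesis exactly once to rule out one bad configuration. The paper's argument is shorter and purely synthetic; yours trades the enumeration of cyclic orders for a trace computation plus a circle-dynamics lemma, which is a bit longer but more systematic, and whose final step isolates a reusable fact (a hyperbolic element whose fixed points separate two fixed points of $f$ must have its graph meet $\gr(f)$). The individual steps all check out: $b,c\neq 0$ does follow from $\gamma^{-1}(0)\neq 0$ and $\gamma^{-1}(\infty)\neq\infty$; the analysis of $\mu\mapsto a\mu+d\mu^{-1}$ is exhaustive; a lift fixing one point fixes every lift of every fixed point, so $\tilde f(1/2)=1/2$ and $\widetilde{\gamma^{-1}}(\zeta_-)=\zeta_-$; and an interval $(N,N+1)$ with $N\in\Z$ indeed cannot contain values of both signs.
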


\begin{proof}
Applying the action of $\PSL(2,\R)\times\PSL(2,\R)$ we can assume that $\gamma=\1$. The hypothesis $\partial_\infty P_\1\cap\gr(f)=\emptyset$ then tells us that $f$ has no fixed point. We are looking for a $\sigma\in\PSL(2,\R)$ such that 
\begin{itemize}
\item $P_\1\cap P_{\sigma^{-1}}=\emptyset$;
\item $(x,f(x)),(x',f(x'))\in\partial_\infty P_{\sigma^{-1}}=\gr(\sigma)$.
\end{itemize}
For the first condition to hold, it clearly suffices that the boundaries of $P_\1$ and $P_{\sigma^{-1}}$ do not intersect, that is to say, $\sigma(y)\neq y$ for all $y\in\RP^1$. This is equivalent to saying that $\sigma$ does not have fixed points on $\RP^1$, namely, $\sigma$ is an elliptic isometry. The second condition is equivalent to $\sigma(x)=f(x)$ and $\sigma(x')=f(x')$. 

Since $f$ has no fixed points, $f(x)\neq x$ and $f(x')\neq x'$. There are various cases to distinguish (see also Figure \ref{fig:casesremark}). First, suppose $(x,f(x),x')$ is a positive triple. Then either $(x,f(x'),f(x),x')$ or $(x,f(x),x',f(x'))$ are in cyclic order, because the remaining possibility, namely that $(x,f(x),f(x'),x')$ are in cyclic order, would imply that $f$ has a fixed point. If $(x,f(x'),f(x),x')$ are in cyclic order, then the hyperbolic geodesics $\ell$ connecting $x$ to $f(x)$ and $\ell'$ connecting $x'$ to $f(x')$ intersect, and the order two elliptic isometry $\sigma$ fixing $\ell\cap \ell'$ maps $x$ to $f(x)$ and $x'$ to $f(x')$. If  $(x,f(x),x',f(x'))$ are in cyclic order, then the geodesics $\ell_1$ connecting $x$ to $x'$ and $\ell_2$ connecting $f(x)$ to $f(x')$ intersect, and one can find an elliptic element $\sigma$ fixing $\ell_1\cap\ell_2$ sending $x$ to $f(x)$ and $x'$ to $f(x')$. Second, if $(x,f(x),x')$ is a negative triple, then  the argument is completely analogous. Finally,  there is the possibility that $f(x)=x'$. If $f(x')\neq x$, the $\sigma$ we are looking for is for instance an order--three elliptic isometry with fixed point in the barycenter of the triangle with vertices $x,f(x)=x'$ and $f(x')$. If instead $f(x')=x$, then clearly we can pick any order--two elliptic isometry with fixed point on the geodesic $\ell$ from $x$ to $x'$.  
\end{proof}

In particular, Lemma \ref{lem:intersection interior} shows that given any spacelike affine chart $\mathcal A_\gamma$ containing $\gr(f)$ and any two distinct points in $\gr(f)$, the line connecting them is contained in $\AdS^3\cap \mathcal A_\gamma$ (except for its endpoints, which are in $\partial_\infty\AdS^3$), and is a spacelike geodesic of $\AdS^3$.

\begin{figure}[htb]
\centering
\includegraphics[width=\textwidth]{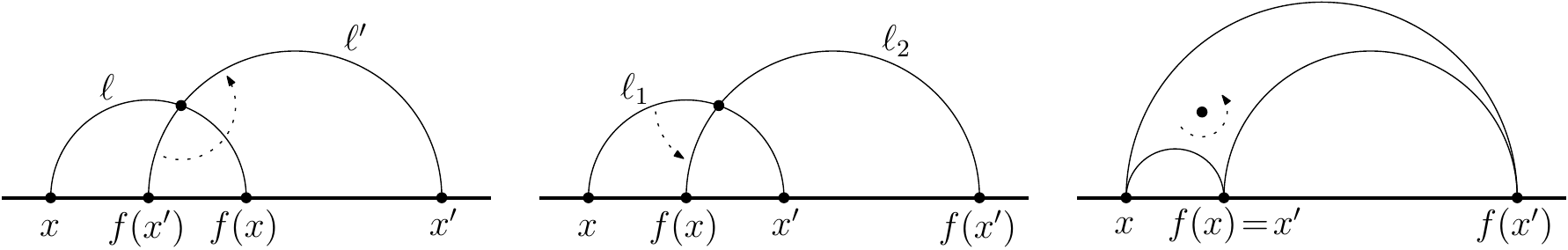}
\caption{Several cases in the proof of Lemma \ref{lem:intersection interior}.}
\label{fig:casesremark}       
\end{figure}

We are now ready to prove Proposition \ref{prop:intersection bdy}.

\begin{proof}[Proof of Proposition \ref{prop:intersection bdy}]
Given a point $p$ in $\partial_{\infty}\AdS^3\setminus \gr(f)$, by the second item of Lemma \ref{lem:disjoint} there exists a spacelike plane $P_{\gamma'}$ passing through $p$
that does not intersect $\gr(f)$. This implies that $P_{\gamma'}\cap K=\emptyset$, and therefore $K\cap \partial_{\infty}\AdS^3=\gr(f)$.
Since $K$ is connected, it is contained in the
closure of one component of the complement of $\partial_\infty\AdS^3$ in $\mathcal A_\gamma$. But $K$ is connected and  intersects $\AdS^3\setminus P_\gamma$ because, by Lemma \ref{lem:intersection interior}, the line segment connecting any two points of $\gr(f)$ in the affine chart $\mathcal A_\gamma$ is contained in $\AdS^3\cap\mathcal A_\gamma$. Hence $K$ is contained in $\overline\AdS{}^{3}$ and its interior is contained in $\AdS^3$.
\end{proof}

By Corollary \ref{cor:disjoint} and Proposition \ref{prop:intersection bdy}, we can now give the following definition:

\begin{definition}
Given an orientation-preserving homeomorphism $f:\mathbb{RP}^1\to \mathbb{RP}^1$, we define $\con(f)$ to be the subset of $\overline\AdS{}^3$ which is obtained as the convex hull of $\gr(f)$ in any spacelike affine chart $\mathcal A_\gamma$ such that $\partial_\infty P_\gamma\cap\gr(f)=\emptyset$.
\end{definition}

The definition is well posed --- that is, it does not depend on the chosen affine chart $\mathcal A_\gamma$ --- because lines and planes are well defined in projective space, hence the change of coordinates from an affine chart to another preserves convex sets. When referring to convexity notions in the following, we will implicitly assume we have chosen a spacelike affine chart $\mathcal A_\gamma$ containing $\gr(f)$.

\subsection{Support planes}

Let us recall a basic notion in convex analysis.
Given a  convex body $K$ in an affine space of dimension three, a \emph{support plane} of $K$ is an affine plane $Q$ such that $K$ is contained in a closed half-space bounded by $Q$, and $\partial K\cap Q\neq \emptyset$. If $p\in \partial  K\cap Q$, one  says that $Q$ is a support plane \emph{at the point} $p$. As a consequence of the Hahn--Banach theorem, there exists a support plane at every point $p\in \partial K$.

We will adopt this terminology for the convex hulls  $\con(f)$ in $\AdS^3$: we say that a totally geodesic plane $P$ is a support plane of  $\con(f)$ (at $p\in\partial\con(f)$) if $p\in\con(f)\cap \overline P\subset\overline\AdS{}^3$ and, in an affine chart containing $\gr(f)$, $\con(f)$ lies  in a closed half-space bounded by the affine plane that contains $P$. As usual, one easily sees that this definition does not depend on the affine chart as long as it contains $\gr(f)$.

\begin{remark}\label{rem:equivalent def support}
Equivalently, we can say that a  totally geodesic plane $P$ is a support plane for $\con(f)$ if there exists a continuous family $\{P_t\}_{t\in [0,\epsilon)}$ of  totally geodesic planes, pairwise disjoint in $\overline\AdS{}^3$, such that $P_0=P$ and  $P_{t}\cap \con(f)=\emptyset$ for $t> 0$.
\end{remark}

Also, recall that we have the following identity for convex hulls: if $X$ is a set, $\con(X)$ its convex hull and $Q$ an affine support plane for $\con(X)$, then $Q\cap \con(X)=\con(Q\cap X)$. Applying this identity in our setting, we obtain for any totally geodesic support plane $P$:
\begin{equation}\label{eq:shape}
P\cap\con(f)=\con(\partial_\infty P\cap\gr(f))~.
\end{equation}

In the following proposition, we see that all support planes of $\con(f)$ are allowed to be spacelike, and  lightlike only if they touch $\con(f)$ at a boundary point.

\begin{proposition}\label{prop:support planes space or light}
Let $f:\mathbb{RP}^1\to \mathbb{RP}^1$ be an orientation-preserving homeomorphism, and let $P$ be a support plane of $\con(f)$ at a point $p\in\partial\con(f)$. Then:
\begin{itemize}
\item If $p\in \AdS^3$, then $P$ is a spacelike plane.
\item If $p\in \partial_\infty\AdS^3$, then $P$ is either spacelike or lightlike.
\end{itemize}
\end{proposition}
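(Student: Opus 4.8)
The strategy is to rule out timelike support planes entirely, and to rule out lightlike support planes at interior points, using the description of their boundaries at infinity from Lemmas \ref{lem:timelike planes} and \ref{lem:lightlike planes} together with the identity \eqref{eq:shape}. The key principle is that a support plane $P$ must satisfy $P \cap \con(f) = \con(\partial_\infty P \cap \gr(f))$, so if $\partial_\infty P \cap \gr(f)$ is ``too small'' relative to where $p$ sits, we get a contradiction.

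\emph{First I would treat the timelike case.} Suppose $P = P_\eta$ is timelike, with $\eta$ an orientation-reversing isometry of $\Hyp^2$. By Lemma \ref{lem:timelike planes}, $\partial_\infty P_\eta = \gr(\eta^{-1})$, where $\eta^{-1}$ is also orientation-reversing. Now $\gr(f)$ is the graph of an \emph{orientation-preserving} homeomorphism while $\gr(\eta^{-1})$ is the graph of an orientation-reversing one; viewing both in the torus $\RP^1 \times \RP^1$, these two curves cross transversally, and in fact $\partial_\infty P_\eta \cap \gr(f)$ consists of at most two points (the solutions of $f = \eta^{-1}$ on $\RP^1$) — and at least one point, by a degree/intermediate-value argument as in the proof of Lemma \ref{lem:disjoint} and Figure \ref{fig:graphs}. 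So $P_\eta \cap \con(f) = \con(\partial_\infty P_\eta \cap \gr(f))$ is either a single point of $\partial_\infty \AdS^3$ or a spacelike segment joining two points of $\gr(f)$ (spacelike by the remark following Lemma \ref{lem:intersection interior}), with endpoints on $\partial_\infty\AdS^3$. In either case $P_\eta \cap \con(f) \subset \partial_\infty\AdS^3 \cup (\text{spacelike geodesic})$, and crucially it contains no point of $\AdS^3$ that is in the \emph{interior} of $\con(f)$; but more to the point, a genuine support plane at $p \in \partial\con(f)$ would have to contain $p$, and if $p \in \AdS^3$ then $p$ lies on this spacelike segment. I would then derive a contradiction from the fact that a timelike plane cannot locally support a convex body containing a spacelike segment through $p$ on one side: near an interior point of a spacelike geodesic segment of $\con(f)$, the convex hull contains points on \emph{both} sides of any timelike plane through that segment (because one can perturb within $\con(f)$ in a timelike direction, e.g.\ by Lemma \ref{lem:intersection interior} producing nearby spacelike segments through other pairs of points of $\gr(f)$ that exit $P_\eta$ on either side). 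This handles $p \in \AdS^3$; the case $p \in \partial_\infty\AdS^3$ for a timelike plane is excluded the same way, since then $\partial_\infty P_\eta \cap \gr(f) \ni p$ is at most two points and $\con(f)$ — being a convex body with $\gr(f)$ a curve, not contained in $\gr(\eta^{-1})$ — must poke through $P_\eta$.

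\emph{Next, the lightlike case at an interior point.} Suppose $P = P_{[A]}$ is lightlike and $p \in \AdS^3 \cap \partial\con(f)$. By Lemma \ref{lem:lightlike planes}, $\partial_\infty P_{[A]} = (\im(A) \times \RP^1) \cup (\RP^1 \times \Ker(A))$, two ``coordinate circles'' in $\RP^1\times\RP^1$ meeting at one point. Since $f$ is a homeomorphism of $\RP^1$, a horizontal circle $\{x_0\} \times \RP^1$ meets $\gr(f)$ in exactly the one point $(x_0, f(x_0))$, and a vertical circle $\RP^1 \times \{y_0\}$ meets it in exactly $(f^{-1}(y_0), y_0)$. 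Hence $\partial_\infty P_{[A]} \cap \gr(f)$ consists of at most two points, so by \eqref{eq:shape} the set $P_{[A]} \cap \con(f) = \con(\partial_\infty P_{[A]} \cap \gr(f))$ is contained in $\partial_\infty\AdS^3 \cup (\text{a single spacelike geodesic})$. In particular it contains no point of $\AdS^3$ unless that point lies on such a geodesic segment — but then the same local argument as in the timelike case shows $\con(f)$ lies on both sides of $P_{[A]}$ near $p$, contradicting the support condition. (Alternatively, and more cleanly, if $P_{[A]} \cap \con(f) \subset \partial_\infty \AdS^3$ outright, then $P_{[A]}$ cannot be a support plane \emph{at} an interior point $p$, since $p \notin P_{[A]}$.) This gives the first bullet: at $p \in \AdS^3$, the support plane $P$ is neither timelike nor lightlike, hence spacelike. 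The second bullet is what remains after the timelike case is excluded: at $p \in \partial_\infty\AdS^3$, $P$ is spacelike or lightlike.

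\emph{The main obstacle} I anticipate is making rigorous the ``$\con(f)$ lies on both sides'' local argument that excludes timelike and lightlike planes at interior points — i.e.\ showing that if a support plane $P$ at $p \in \AdS^3$ intersects $\con(f)$ in (a subset of) a single spacelike geodesic $\ell$ through $p$, then $\con(f)$ genuinely crosses $P$. The clean way is: pick two points $q_1, q_2 \in \gr(f)$ whose connecting spacelike segment passes near $p$ (possible since $p$ is in the closure of such segments, $\gr(f)$ being the full boundary curve and $\con(f)$ its hull), with $q_1, q_2 \notin \partial_\infty P$; since $\partial_\infty P \cap \gr(f)$ has at most two points and $\partial_\infty P$ separates $\RP^1 \times \RP^1$ into regions, $q_1$ and $q_2$ can be arranged to lie on opposite sides of $P$, so the segment $[q_1, q_2] \subset \con(f)$ crosses $P$ transversally near $p$, contradicting the half-space condition. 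Verifying that this separation can always be arranged — using that $f$ is orientation-preserving and $\partial_\infty P$ is either an orientation-reversing graph (timelike case) or a pair of coordinate circles (lightlike case) — is the delicate bookkeeping, and I would organize it by the same case analysis on cyclic orders used in the proof of Lemma \ref{lem:intersection interior}.
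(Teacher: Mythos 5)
Your proposal is correct in outline and uses the same ingredients as the paper (the boundary descriptions in Lemmas \ref{lem:timelike planes} and \ref{lem:lightlike planes}, plus the identity \eqref{eq:shape}), but the step you flag as the ``main obstacle'' is where you diverge from the paper, and it is much simpler than your plan suggests. You do not need to compute $P\cap\con(f)$, nor to produce a segment through points of $\gr(f)$ passing near $p$, nor any cyclic-order bookkeeping: the only observation needed is that if $\partial_\infty P$ crosses $\gr(f)$ transversely at some point, then $\gr(f)\subset\con(f)$ already contains points strictly on both sides of the affine plane containing $P$, which violates the half-space condition defining a support plane, regardless of where the contact point $p$ lies. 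This single observation kills the timelike case outright (the graph of the orientation-reversing map $\eta^{-1}$ meets $\gr(f)$ in exactly two points, the fixed points of the orientation-reversing homeomorphism $\eta\circ f$, and both crossings are transverse), and in the lightlike case it forces $\partial_\infty P_{[A]}\cap\gr(f)$ to reduce to the corner point $p_0$ where the two coordinate circles meet, since any intersection with either circle away from $p_0$ is transverse; then \eqref{eq:shape} gives $P_{[A]}\cap\con(f)=\{p_0\}\subset\partial_\infty\AdS^3$, so a lightlike support plane can only occur at a point of $\gr(f)$. This is exactly the paper's argument. Two cautions about your proposed completion of the flagged step: requiring the segment $[q_1,q_2]$ to pass near $p$ is irrelevant, since two points of $\con(f)$ strictly on opposite sides of the plane already contradict the support condition; and the assertion that ``$\partial_\infty P$ separates $\RP^1\times\RP^1$ into regions'' is false as stated for a timelike (or spacelike) plane --- the complement in the torus of the graph of a circle homeomorphism is connected. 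The separation that matters is by the projective plane containing $P$, and at a transverse crossing it is available locally, which is all the argument requires.
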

\begin{proof}
The basic observation is that if $P$ is a support plane, then $\partial_\infty P$ and $\gr(f)=\con(f)\cap\partial_\infty\AdS^3$ do not intersect transversely. To clarify this notion, we say that an intersection point $p\in \partial_\infty P\cap\gr(f)$ is transverse if, for a small neighbourhood $U$ of $p$ such that $(\gr(f)\setminus p)\cap U$ has two connected components, these two connected components are contained in different connected components of $U\setminus \partial_\infty P$. From Lemma \ref{lem:timelike planes}, if $P$ is timelike, then $\partial_\infty P$ is the graph of an orientation-reversing homeomorphism of $\RP^1$, hence it intersects $\gr(f)$ transversely. From Lemma \ref{lem:lightlike planes}, if $P$ is lightlike, then $\partial_\infty P$ is the union of the two circles $\{x\}\times\RP^1$ and $\RP^1\times\{y\}$. So if $p\in \partial_\infty P\cap\gr(f)$ and $p$ is not the point $p_0=(x,y)$, then $\partial_\infty P$ and $\gr(f)$ intersect transversely. So the sole possibility for $P$ to be a lightlike support plane is to intersect $\gr(f)$ only at the point $p_0$. It remains to show that $P\cap\con(f)$ consists only of the point $p_0$, that is, it does not contain any point of $\AdS^3$. By contradiction, if $q\in  P\cap\con(f)$ is different from $p_0$, then by \eqref{eq:shape} $\partial_\infty P\cap\gr(f)$ would contain another point different by $p_0$ as well, because the left-hand side must contain not only $p_0$ but also $q$. This would give a contradiction as above.
\end{proof}

Given a spacelike support plane $P$ of $\con(f)$ at a point $p$, we say that $P$ is a \emph{future} (resp. \emph{past}) support plane if in a small simply connected neighbourhood $U$ of $p$ in $\overline\AdS{}^3$, $\con(f)$ is contained in the closure of the connected component of $U\setminus P$ which is in the past (resp. future) of $P$. This means that there exist future-oriented (resp. past-oriented) timelike curves in $U$ leaving $\con(f)\cap U$ and reaching $P\cap U$. 

Clearly $\con(f)$ cannot have a future and past support plane at $p$ at the same time, unless $\con(f)$ has empty interior, which is precisely the situation when $f$ is an element of $\PSL(2,\R)$ as in Example \ref{ex:plane}. In the following we will always assume $\mathrm{int}\,\con(f)\neq\emptyset$.
As a consequence of the previous discussion, we have the following useful statement on the convergence of support planes.

\begin{lemma}\label{lem: convergence support planes}
Let $f:\mathbb{RP}^1\to \mathbb{RP}^1$ be an orientation-preserving homeomorphism which is not in $\PSL(2,\R)$, $p_n$  a sequence of points in $\partial\con(f)$, and $P_{\gamma_n}$  a  sequence of future (resp. past) spacelike support planes at $p_n$, for $\gamma_n\in\PSL(2,\R)$. Up to extracting a subsequence, we can assume $p_n\to p$ and $P_{\gamma_n}\to P$. Then:
\begin{itemize}
\item If $p\in\AdS^3$, then $P=P_{\gamma}$ is a future (resp. past) support plane of $\con(f)$, for $\gamma_n\to \gamma\in\PSL(2,\R)$.
\item If $p\in\partial_\infty\AdS^3$, then either $P$ is a lightlike plane whose boundary is the union of two circles meeting at $p$, or the conclusion of the previous point holds.
\end{itemize}
\end{lemma}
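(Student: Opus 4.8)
The plan is to exploit the explicit parametrization of spacelike support planes by elements $\gamma_n \in \PSL(2,\R)$, together with Lemma \ref{lem:spacelike planes}, which identifies $\partial_\infty P_{\gamma_n}$ with $\gr(\gamma_n^{-1})$. First I would pass to the group $\SL(2,\R)$: lift each $\gamma_n$ to a matrix $A_n$ with $q(A_n) = -1$, so that $P_{\gamma_n} = P_{[A_n]}$. These matrices live on the quadric $\{q = -1\}$ in $\mat$, which is not compact, so after normalizing (say, rescaling $A_n$ to have unit Euclidean norm, which only changes the projective class by a positive scalar and hence does not affect $P_{[A_n]}$ as a subset of $\pmat$) I can extract a subsequence converging to some nonzero $A$. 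Set $P := P_{[A]}$, the totally geodesic plane dual to $[A]$; by continuity of the equation \eqref{eq:plane} this is exactly the limit of the $P_{\gamma_n}$ in the Hausdorff sense in $\pmat$. Since $q(A_n) = -1 < 0$ we have $q(A) \le 0$, so $P$ is either spacelike or lightlike — a timelike limit is impossible. This already rules out one case.

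Next I would determine which case occurs according to whether $p \in \AdS^3$ or $p \in \partial_\infty\AdS^3$. If $p \in \AdS^3$, then since $p \in P_{\gamma_n}$ for all $n$ and $p_n \to p$, we get $p \in P$; but $p$ is an interior point of $\overline\AdS{}^3$, so $P$ must meet $\AdS^3$, which forces $q(A) < 0$, i.e. $P$ is spacelike. Rescaling $A$ so that $q(A) = -1$ identifies $[A]$ with an element $\gamma \in \PSL(2,\R)$, and then $\gamma_n \to \gamma$ because the normalized lifts converge. If instead $p \in \partial_\infty\AdS^3$, then $P$ may be spacelike (and the first bullet's conclusion holds) or lightlike; in the latter case Lemma \ref{lem:lightlike planes} says $\partial_\infty P$ is a union of a horizontal and a vertical circle in $\RP^1\times\RP^1$ meeting at the point $\delta([A])$, and since $p \in \overline{P}$ with $p \in \partial_\infty\AdS^3$ we must have $p = \delta([A])$, so the two circles meet at $p$, as claimed.

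The remaining — and main — point is to show that the limit plane $P$ is again a \emph{support} plane, and that it retains the \emph{future} (resp. past) character. For this I would use the characterization in Remark \ref{rem:equivalent def support}: each $P_{\gamma_n}$ being a support plane means $\con(f)$ lies in one closed half-space bounded by $P_{\gamma_n}$ (in an affine chart $\mathcal A_{\gamma_0}$ containing $\gr(f)$, chosen once and for all via Corollary \ref{cor:disjoint}). The defining affine inequality for $\con(f)$ with respect to $P_{\gamma_n}$ passes to the limit, so $\con(f)$ lies in a closed half-space bounded by $P$; combined with $p \in \con(f) \cap \overline P$ (from $p_n \to p$, $p_n \in \con(f) \cap \overline{P_{\gamma_n}}$, and closedness of $\con(f)$), this says $P$ is a support plane at $p$ — in the spacelike case. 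To see the time-orientation is preserved when $P$ is spacelike, I would argue that the future/past dichotomy is governed by the sign of the timelike component of the (co)normal direction, which varies continuously with $[A_n] \to [A]$ and cannot vanish since the limit normal is still timelike (as $q(A) < 0$); hence the sign is constant along the sequence and equals the limit sign. The delicate part here is handling the case where $p$ escapes to the boundary and $P$ degenerates to lightlike: then there is no well-defined future/past side, which is exactly why the statement only claims "$P$ is a lightlike plane whose boundary is the union of two circles meeting at $p$" in that case, so no time-orientation claim needs to be made — one only has to exhibit the lightlike limit and invoke Lemma \ref{lem:lightlike planes}, which the previous paragraph already does. The main obstacle is thus the bookkeeping of orientations and the careful verification that the half-space inequality and the timelike conormal survive the limit; everything else is a compactness-plus-continuity argument.
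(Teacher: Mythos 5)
Your skeleton is the same as the paper's: extract convergent subsequences using compactness of $\con(f)$ and of the space of planes in $\pmat$, observe that the conditions ``$p_n\in\con(f)$'' and ``$\con(f)$ lies in a closed half-space bounded by $P_{\gamma_n}$'' are closed so that the limit $P$ is again a support plane at $p$, and pass the future/past character to the limit by continuity. However, the two steps where you decide the causal character of $P$ rest on incorrect inferences. First, when $p\in\AdS^3$ you argue that $P$ must meet $\AdS^3$ and that this forces $q(A)<0$; that implication is false, since a lightlike plane $P_{[A]}$ (with $A$ of rank one) also intersects $\AdS^3$: it is a degenerate totally geodesic plane, not merely a boundary object. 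The correct way to exclude a lightlike limit at an interior point is the one the paper uses: having shown that $P$ is a support plane at $p$, invoke Proposition \ref{prop:support planes space or light}, whose proof shows that a lightlike support plane can meet $\con(f)$ only at the single ideal point $\delta([A])$, which is incompatible with $p\in\AdS^3$.

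Second, in the lightlike case you claim that $p\in\overline P$ with $p\in\partial_\infty\AdS^3$ already forces $p=\delta([A])$, i.e.\ that the two circles meet at $p$. This does not follow: $\overline P\cap\partial_\infty\AdS^3=\partial_\infty P_{[A]}$ is the whole union $\left(\mathrm{Im}(A)\times\RP^1\right)\cup\left(\RP^1\times\Ker(A)\right)$, and $p\in\gr(f)$ could a priori be either of the (up to two) points where $\gr(f)$ meets this union, not necessarily the crossing point. Here again the missing ingredient is the support-plane property of the limit combined with the transversality argument from the proof of Proposition \ref{prop:support planes space or light}: at any point of $\partial_\infty P_{[A]}$ other than $\delta([A])$, the graph of $f$ crosses $\partial_\infty P_{[A]}$ transversely, contradicting one-sidedness, so the only possible contact point is $\delta([A])$, whence $p=\delta([A])$. (A minor slip elsewhere: you write ``$p\in P_{\gamma_n}$'' where you mean ``$p_n\in P_{\gamma_n}$''; the conclusion $p\in\overline P$ still holds.) Both gaps are repaired simply by applying Proposition \ref{prop:support planes space or light} to the limit plane, which is exactly how the paper concludes.
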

\begin{proof}
The proof is straightforward, having developed all the necessary elements above. It is clear that we can extract converging subsequences from $p_n$ and $P_{\gamma_n}$, by compactness of $\con(f)$ and of the space of planes in projective space. Also, the limit of the sequence of support planes $P_{\gamma_n}$ at $p_n$ is a support plane $P$ at $p$, since both conditions that $p_n\in\con(f)$ and that $\con(f)$ is contained in a closed half-space bounded by $P_{\gamma_n}$ are closed conditions. By Proposition \ref{prop:support planes space or light}, 
 if the limit $p$ is in $\AdS^3$, then $P$ is a spacelike support plane, which is of course future (resp. past) if all the $P_{\gamma_n}$ are future (resp. past). This situation can also occur analogously if $p\in\partial_\infty\AdS^3$; the other possibility being that $P$ is lightlike, and in this case the proof of Proposition \ref{prop:support planes space or light} shows that $P=P_{[A]}$ if $p$ is represented by the projective class of the rank--one matrix $A$.
\end{proof}

\begin{corollary}\label{cor:past or future}
Let $f:\mathbb{RP}^1\to \mathbb{RP}^1$ be an orientation-preserving homeomorphism which is not in $\PSL(2,\R)$. Then $\partial\con(f)$ is the disjoint union of $\gr(f)=\con(f)\cap\partial_\infty\AdS^3$ and of two topological discs,  of which one only admits future support plane, and the other only admits past support planes.
\end{corollary}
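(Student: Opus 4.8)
The plan is to combine Proposition \ref{prop:intersection bdy} (which tells us $\con(f)\subset\overline\AdS^3$, with interior in $\AdS^3$ and $\con(f)\cap\partial_\infty\AdS^3=\gr(f)$), Proposition \ref{prop:support planes space or light} (every support plane at an interior boundary point is spacelike), and Lemma \ref{lem: convergence support planes}. First I would fix a spacelike affine chart $\mathcal A_\gamma$ containing $\gr(f)$; since $f\notin\PSL(2,\R)$, the set $\con(f)$ is a genuine convex body there (Example \ref{ex:plane}), so $\partial\con(f)$ is a topological $2$-sphere. We have already observed $\partial\con(f)=\gr(f)\sqcup(\partial\con(f)\cap\AdS^3)$, so the task is to analyze the open subset $\Sigma:=\partial\con(f)\cap\AdS^3$ of the sphere, whose complement in the sphere is the circle $\gr(f)$; hence $\Sigma$ has (at most) two connected components, each a topological disc. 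I would show $\Sigma$ is \emph{exactly} two discs, one ``past'' and one ``future''.

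The key step is to define $\Sigma^+$ (resp. $\Sigma^-$) as the set of points of $\Sigma$ admitting a future (resp. past) spacelike support plane, and to prove: (i) every point of $\Sigma$ lies in $\Sigma^+\cup\Sigma^-$; (ii) $\Sigma^+\cap\Sigma^-=\emptyset$; (iii) both $\Sigma^+$ and $\Sigma^-$ are nonempty; (iv) $\Sigma^+$ and $\Sigma^-$ are open in $\Sigma$. For (i): by Proposition \ref{prop:support planes space or light} a support plane at $p\in\Sigma$ is spacelike, and a spacelike support plane is by definition either future or past (the ``timelike curve leaving $\con(f)$ and reaching $P$'' must go either to the future or to the past). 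For (ii): this is exactly the remark made just before Lemma \ref{lem: convergence support planes} — $\con(f)$ cannot have both a future and a past support plane at the same point unless $\operatorname{int}\con(f)=\emptyset$, which is excluded since $f\notin\PSL(2,\R)$. For (iii): take any $p_0\in\operatorname{int}\con(f)$ and any point $q$ at maximal ``time distance'' above it along a timelike line; more concretely, one produces a point of $\partial\con(f)$ that is extreme in a timelike direction, which then carries a spacelike support plane separating $\con(f)$ to one timelike side — doing this in the future direction gives a point of $\Sigma^-$ and in the past direction a point of $\Sigma^+$ (both are in $\AdS^3$ and not on $\gr(f)$ because $\gr(f)$ is an achronal set, being the graph of a monotone map). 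For (iv): this is where Lemma \ref{lem: convergence support planes} does the work — if $p_n\to p$ with $p_n\in\Sigma$ carrying future support planes $P_{\gamma_n}$, and $p\in\AdS^3$, then $P_{\gamma_n}\to P_\gamma$ a future support plane at $p$, so $p\in\Sigma^+$; this shows $\Sigma^+$ is closed in $\Sigma$, and symmetrically $\Sigma^-$ is closed in $\Sigma$, so by (i)–(ii) each is also open in $\Sigma$.

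With (i)–(iv) in hand, $\Sigma=\Sigma^+\sqcup\Sigma^-$ is a partition of $\Sigma$ into two nonempty relatively open subsets; since $\Sigma$ has at most two connected components (its complement in $S^2$ being a single circle), each of $\Sigma^+,\Sigma^-$ is connected, hence each is a topological disc, and they are the two components of $\Sigma$. Finally I would note that no point of $\Sigma^+$ admits a past support plane and no point of $\Sigma^-$ admits a future support plane, which is immediate from (ii) together with the fact that a spacelike support plane at an interior boundary point is necessarily future or past. The main obstacle I anticipate is step (iii): carefully producing a point of $\partial\con(f)$ with a past (resp. future) support plane that lies in $\AdS^3$ rather than on $\gr(f)$ — this uses that $\gr(f)$ is achronal in $\partial_\infty\AdS^3$ (so a timelike curve through $\operatorname{int}\con(f)$ cannot exit through $\gr(f)$), which in turn follows from monotonicity of $f$ via the convergence criterion \eqref{eq:convergence} and Lemma \ref{lem:lightlike planes}; everything else is a routine assembly of the already-established propositions.
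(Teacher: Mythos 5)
Your proposal is correct and takes essentially the same route as the paper: the Jordan curve decomposition of the boundary sphere into $\gr(f)$ and two discs, Proposition \ref{prop:support planes space or light} to ensure all support planes at points of the discs are spacelike (hence future or past, and not both since the interior is nonempty), and the open/closed argument via Lemma \ref{lem: convergence support planes}; your step (iii) is just a more explicit version of the paper's one-line remark that compactness in an affine chart forces both a future and a past support plane to occur. Two cosmetic remarks: a timelike geodesic of $\AdS^3$ never meets $\partial_\infty\AdS^3$, so the exit points of your timelike line automatically lie in $\AdS^3$ and the achronality detour is unneeded, and with the paper's conventions the exit point in the future direction carries a \emph{future} support plane, so your $\Sigma^+/\Sigma^-$ labels in (iii) are swapped --- both harmless.
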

\begin{proof}
It is a basic fact in convex analysis that $\partial\con(f)$ is homeomorphic to $\mathbb S^2$; by Proposition \ref{prop:intersection bdy}, its intersection with $\partial_\infty\AdS^3$ equals $\gr(f)$ and is therefore a simple closed curve. By the Jordan curve theorem, the complement of $\gr(f)$ is the disjoint union of two topological discs, each of which is contained in $\AdS^3$ again by Proposition \ref{prop:intersection bdy}. 

By Lemma \ref{lem: convergence support planes}, the set of points $p\in\partial\con(f)$ admitting a future support plane is closed. But it is also open because its complement is the set of points admitting a past support plane, for which the same argument applies. Hence each connected component of the complement of $\gr(f)$ admits only   future support planes, or only past support planes. Finally, $\con(f)$ necessarily admits both a past and a future support plane, otherwise it would not be compact in an affine chart. This concludes the proof.
\end{proof}

By virtue of Corollary \ref{cor:past or future}, we will call the connected component of $\partial\con(f)\setminus\gr(f)$ that only admits future support planes the \emph{future boundary component}, and denote it by $\partial_+\con(f)$; similarly, the connected component that only admits past support planes is the \emph{past boundary component}, denoted by $\partial_-\con(f)$.

\subsection{Left and right projections}

We are now ready to introduce the \emph{left and right projections}, which will play a central role in the proof of the earthquake theorem. These are maps 
$$\pi_l^\pm:\partial_\pm\con(f)\to\Hyp^2\qquad \pi_r^\pm:\partial_\pm\con(f)\to\Hyp^2$$
defined on the future or past components of $\partial\con(f)$, constructed as follows. Given a point $p\in\partial_\pm\con(f)$, let $P$ be a support plane of $\con(f)$ at $p$. By Proposition \ref{prop:support planes space or light}, the support plane is necessarily spacelike, hence of the form $P=P_\gamma$ for some $\gamma\in\PSL(2,\R)$. 

\begin{remark}\label{rmk:choice}
It is important to remark here that $P_\gamma$ might not be unique, if $\partial_\pm\con(f)$ is not $C^1$ at $p$. Hence we \emph{choose}  a support plane $P_\gamma$ at $p$. Moreover we require that the choice of support planes is made so that the support plane is constant on any connected component of the subset of $\partial_\pm\con(f)$ consisting of those points that admit more than one support plane. The definition of the projections then \emph{depends} (although quite mildly, see Corollary \ref{cor: image does not depend} below) on the choice of $P_\gamma$.
\end{remark}

Now, having chosen the support plane $P_\gamma$ at $p$, left or right multiplication by $\gamma^{-1}$ maps $\gamma$ to $\1$, and therefore maps $P_\gamma$ to $P_\1$, which we recall from Example \ref{ex:traceless} is the space of order--two elliptic elements and is therefore naturally identified with $\Hyp^2$ via the map $\Fix:P_\1\to\Hyp^2$. 

Denote by $L_{\gamma^{-1}}:\PSL(2,\R)\to\PSL(2,\R)$ and $R_{\gamma^{-1}}:\PSL(2,\R)\to\PSL(2,\R)$ the left and right multiplications by $\gamma^{-1}$; in other words, there are the actions of the elements $(\gamma,\1)$ and $(\1,\gamma^{-1})$ of $\PSL(2,\R)\times\PSL(2,\R)$. By what we said above, $L_{\gamma^{-1}}(p)$ and $R_{\gamma^{-1}}(p)$ are elements of $P_\1$, since $p\in P_\gamma$, and $L_{\gamma^{-1}}(p)$ (resp. $R_{\gamma^{-1}}(p)$) maps bijectively $P_\gamma$ to $P_\1$. We can finally
define:
\begin{equation}\label{defi projections}
\pi_l^\pm(p)=\Fix(R_{\gamma^{-1}}(p))\qquad \pi_r^\pm(p)=\Fix(L_{\gamma^{-1}}(p))~.
\end{equation}

It might seem counterintuitive to define the left projection using right multiplication, and vice versa. However, this is the most natural choice by virtue of the property of Lemma \ref{lem:left right proj} below. Another reason to justify this choice is that these projections can be naturally seen as the left and right components of the \emph{Gauss map} of spacelike surfaces in $\AdS^3$ with values in the space of timelike geodesics of $\AdS^3$, which is naturally identified with $\Hyp^2\times\Hyp^2$, see \cite[Section 6.3]{survey} for more details and for several other equivalent definitions.

\begin{lemma}\label{lem:left right proj}
Let $f:\mathbb{RP}^1\to \mathbb{RP}^1$ be an orientation-preserving homeomorphism, and let $(\alpha,\beta)\in\PSL(2,\R)\times\PSL(2,\R)$. Let us denote $K=\con(f)$ and $\hat K=(\alpha,\beta)\cdot\con(f)$ and let $\pi_l^\pm,\pi_r^\pm:\partial_\pm K\to\Hyp^2$ and $\hat\pi_l^\pm,\hat\pi_r^\pm:\partial_\pm \hat K\to\Hyp^2$ be the left and right projections of $K$ and $\hat K$ respectively. Then
\begin{equation}\label{eq:lem:left right proj}
\hat\pi_l^\pm\circ (\alpha,\beta)=\alpha\circ\pi_l^\pm \qquad \hat\pi_r^\pm\circ (\alpha,\beta)=\beta\circ \pi_r^\pm~.
\end{equation}
\end{lemma}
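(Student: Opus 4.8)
The statement to prove is the equivariance relation \eqref{eq:lem:left right proj} for the left and right projections. The key point is that everything in the definition \eqref{defi projections} of $\pi_l^\pm$ and $\pi_r^\pm$ is built out of the action of $\PSL(2,\R)\times\PSL(2,\R)$ on $\AdS^3\cong\PSL(2,\R)$ by $(\alpha,\beta)\cdot\gamma=\alpha\gamma\beta^{-1}$, together with the fixed-point map $\Fix:P_\1\to\Hyp^2$, which is equivariant for conjugation by \eqref{eq:uivariance}. So the proof is a bookkeeping computation, and the main thing to be careful about is the interplay between left and right multiplications and the switch of roles between $\alpha$ and $\beta$.

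First I would set up notation: fix $p\in\partial_\pm K$ and let $P_\gamma$ be the support plane chosen at $p$ (in accordance with Remark \ref{rmk:choice}). Since $\hat K=(\alpha,\beta)\cdot K$, the point $\hat p:=(\alpha,\beta)\cdot p=\alpha p\beta^{-1}$ lies in $\partial_\pm\hat K$, and the plane $(\alpha,\beta)\cdot P_\gamma$ is a support plane of $\hat K$ at $\hat p$; by \eqref{eq:action graph} and Lemma \ref{lem:spacelike planes}, or directly from $(\alpha,\beta)\cdot P_\gamma=(\alpha,\beta)\cdot(\gamma,\1)\cdot P_\1=(\alpha\gamma,\beta)\cdot P_\1 = (\alpha\gamma\beta^{-1},\1)\cdot P_\1$, this support plane equals $P_{\hat\gamma}$ with $\hat\gamma=\alpha\gamma\beta^{-1}$. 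One should also note that this choice of support plane for $\hat K$ is compatible with the constancy requirement of Remark \ref{rmk:choice}, since the action of $(\alpha,\beta)$ is a homeomorphism carrying the non-$C^1$ locus of $\partial_\pm K$ onto that of $\partial_\pm\hat K$; strictly speaking the projections depend on the choice, but \eqref{eq:lem:left right proj} holds for the choice on $\hat K$ induced by that on $K$, which is what is needed.

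Then I would just compute. For the right projection: $\hat\pi_r^\pm(\hat p)=\Fix\big(L_{\hat\gamma^{-1}}(\hat p)\big)=\Fix\big(\hat\gamma^{-1}\hat p\big)=\Fix\big(\beta\gamma^{-1}\alpha^{-1}\cdot\alpha p\beta^{-1}\big)=\Fix\big(\beta(\gamma^{-1}p)\beta^{-1}\big)$. Now $\gamma^{-1}p=L_{\gamma^{-1}}(p)\in P_\1$ is an order-two elliptic element, and conjugation by $\beta$ sends it to another such element; by the conjugation-equivariance \eqref{eq:uivariance} of $\Fix$ (equivalently $\Fix(\beta\xi\beta^{-1})=\beta(\Fix(\xi))$), this equals $\beta\cdot\Fix(\gamma^{-1}p)=\beta\cdot\Fix(L_{\gamma^{-1}}(p))=\beta\cdot\pi_r^\pm(p)$, which is exactly $\hat\pi_r^\pm\circ(\alpha,\beta)=\beta\circ\pi_r^\pm$. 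The argument for the left projection is symmetric: $\hat\pi_l^\pm(\hat p)=\Fix\big(R_{\hat\gamma^{-1}}(\hat p)\big)=\Fix\big(\hat p\,\hat\gamma^{-1}\big)=\Fix\big(\alpha p\beta^{-1}\cdot\beta\gamma^{-1}\alpha^{-1}\big)=\Fix\big(\alpha(p\gamma^{-1})\alpha^{-1}\big)=\alpha\cdot\Fix(R_{\gamma^{-1}}(p))=\alpha\cdot\pi_l^\pm(p)$, again using \eqref{eq:uivariance}.

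**Main obstacle.** There is no serious obstacle; the content is that the definitions were arranged precisely so that this works, and in particular that using $R_{\gamma^{-1}}$ for $\pi_l^\pm$ and $L_{\gamma^{-1}}$ for $\pi_r^\pm$ is what makes left multiplication by $\alpha$ commute past the right-multiplication in $\pi_l^\pm$ and produce a clean conjugation (and dually for $\pi_r^\pm$). The only point requiring a line of justification is that the chosen support plane at $\hat p$ is $(\alpha,\beta)\cdot P_\gamma$ and equals $P_{\alpha\gamma\beta^{-1}}$ in accordance with the choice convention; the rest is the computation above, using only the formula $(\alpha,\beta)\cdot\gamma=\alpha\gamma\beta^{-1}$ and the equivariance \eqref{eq:uivariance} of $\Fix$.
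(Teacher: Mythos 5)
Your proposal is correct and follows essentially the same route as the paper's proof: identify the chosen support plane at $\hat p=(\alpha,\beta)\cdot p$ as $P_{\hat\gamma}$ with $\hat\gamma=\alpha\gamma\beta^{-1}$, then compute $R_{\hat\gamma^{-1}}(\hat p)=\alpha\, R_{\gamma^{-1}}(p)\,\alpha^{-1}$ (and $L_{\hat\gamma^{-1}}(\hat p)=\beta\, L_{\gamma^{-1}}(p)\,\beta^{-1}$) and conclude via the conjugation-equivariance \eqref{eq:uivariance} of $\Fix$. Your added remarks on the induced choice of support planes match the clarification the paper makes right after the statement, so nothing is missing.
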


To clarify the statement, let us remark that the isometry $(\alpha,\beta)$ maps a point $p\in K$ to a point $\hat p\in \hat K$, and maps support planes at $p\in K$ to support planes at $\hat p$. Hence the relation \eqref{eq:lem:left right proj} holds when we consider the projections $\hat \pi_l^\pm$ and  $\hat \pi_r^\pm$ defined with the choice of support planes of $\hat K$ given by the images $\hat P$ of the  support planes $P$ chosen in the definitions of $\pi_l^\pm$ and $ \pi_r^\pm$.

\begin{proof}
As remarked above, for any $p\in \partial^\pm K$, we have  $\hat p:=(\alpha,\beta)\cdot p\in \hat K$, and for a chosen support plane $P=P_{\gamma}$ for $K$ at $p$, $(\alpha,\beta)\cdot P=P_{\hat \gamma}$ is the chosen support plane  for $\hat K$ at $\hat p$. By the duality, $\hat\gamma=(\alpha,\beta)\cdot\gamma=\alpha\gamma\beta^{-1}$. Hence we have:
\begin{align*}
\hat \pi_l^\pm(\hat p)&=\Fix\left(R_{\hat\gamma^{-1}}(\hat p)\right)=\Fix\left(R_{(\beta\gamma^{-1}\alpha^{-1})}(\alpha p\beta^{-1})\right) \\
&=\Fix\left(R_{(\gamma^{-1}\alpha^{-1})}(\alpha p)\right)= \Fix\left(\alpha\circ R_{\gamma^{-1}}( p)\circ\alpha^{-1}\right)\\
&=\alpha(\Fix(R_{\gamma^{-1}}( p)))=\alpha \pi_l^\pm(p)~.
\end{align*}
The computation is completely analogous for the right projection.
\end{proof}

\begin{example}\label{ex: extension isometry}
The simplest example that we can consider is the situation where $f=\sigma\in\PSL(2,\R)$, so that $\con(f)=P_{\sigma^{-1}}$ as in Example \ref{ex:plane}. This case is somehow degenerate, because $\con(\sigma)$ has empty interior, hence Corollary \ref{cor:past or future} does not hold and it does not quite make sense to talk about the future and past components of the boundary. However, we can still define the left and right projections. Since $P_{\sigma^{-1}}$ itself is the unique support plane at any of its points, from \eqref{defi projections} we have the following simple expressions for the left and right projections $\pi_l,\pi_r:P_{\sigma^{-1}}\to\Hyp^2$.
\begin{equation}\label{eq:ext proj plane}
\pi_l(p)=\Fix(p\circ\sigma)\qquad \pi_r(p)=\Fix(\sigma\circ p)~.
\end{equation}
Observe that $\pi_l$ and $\pi_r$ extend to the boundary of $P_{\sigma^{-1}}$: recalling that the boundary of $P_{\sigma^{-1}}$ is the graph of $\sigma$ (Lemma \ref{lem:spacelike planes}), we have  
\begin{equation}\label{eq:ext bdy plane}
\pi_l(x,\sigma(x))=x\qquad \pi_r(x,\sigma(x))=\sigma(x)~.
\end{equation}
Equation \eqref{eq:ext bdy plane} is indeed immediately checked when $\sigma=\1$, because in that case $\pi_l$ and $\pi_r$ coincide with  the fixed point map $\Fix:P_\1\to\Hyp^2$, and we have already observed in Example \ref{ex:plane}, using \eqref{eq:convergence}, that $\Fix$ extends to the map $(x,x)\mapsto x$ from $\partial_\infty P_\1$ to $\partial_\infty\Hyp^2$. The general case of Equation \eqref{eq:ext bdy plane} then follows from Equations \eqref{eq:action graph} and  \eqref{eq:lem:left right proj}, that is, by observing that the isometry $(\1,\sigma)$ maps $\gr(\1)$ to $\gr(\sigma)$ and $P_\1$ to $P_{\sigma^{-1}}$. 

Finally, we can compute the map of $\Hyp^2$ obtained by composing the inverse of the left projection with the right projection. Indeed, this  is induced by the map $P_\1\to P_\1$ sending an order--two elliptic element $\mathcal R=p\circ \sigma \in P_\1$ to $\sigma\circ p=\sigma\circ\mathcal R\circ \sigma^{-1}$. Hence we have
\begin{equation}\label{eq:compose plane}
\pi_r\circ\pi_l^{-1}=\sigma:\Hyp^2\to\Hyp^2~.
\end{equation}
\end{example}

In conclusion, the composition $\pi_r\circ\pi_l^{-1}$ is an isometry and its extension to $\partial_\infty\Hyp^2$ is precisely the map $f=\sigma$ of which $\partial_\infty P_{\sigma^{-1}}$ is the graph. In the next sections we will see that this fact is extremely general, that is, for any orientation-preserving homeomorphism of the circle $f$,  the compositions $\pi_r^\pm\circ(\pi_l^\pm)^{-1}$ associated with $\partial_\pm\con(f)$ will be the left and right earthquake maps extending $f$.

\section{The case of two spacelike planes}\label{sec:example}

Before moving to the proof of Thurston's earthquake theorem, we will now consider another very concrete example, which is only slightly more complicated than Example \ref{ex: extension isometry}. Nevertheless, we will see that this example represents a very general situation, and its comprehension is the essential step towards the proof of the full theorem.

\subsection{The fundamental example}

The idea here is to consider piecewise totally geodesic surfaces in $\AdS^3$, which are obtained as the union of two connected subsets, each contained in a totally geodesic spacelike plane, meeting along a common geodesic. See Figure \ref{fig:pleated}.

To formalize this idea, we will consider the union of two half-planes, each contained in a totally geodesic spacelike plane $P_{\gamma_1}$ or $P_{\gamma_2}$. The first important observation is the following.

\begin{lemma}\label{lemma intersection two planes}
Let $\gamma_1\neq \gamma_2\in\PSL(2,\R)$. Then $P_{\gamma_1}$ and $P_{\gamma_2}$ intersect in $\AdS^3$ if and only if $\gamma_2\circ \gamma_1^{-1}$ is a hyperbolic isometry.
\end{lemma}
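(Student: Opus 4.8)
The plan is to translate the statement about intersection of dual planes into a statement about fixed points of the composition $\gamma_2\circ\gamma_1^{-1}$, using the left-invariance of the constructions. First I would reduce to the case $\gamma_1=\1$: applying the isometry $(\gamma_1^{-1},\1)$ of $\AdS^3$ (that is, left multiplication by $\gamma_1^{-1}$), we have $(\gamma_1^{-1},\1)\cdot P_{\gamma_1}=P_{\1}$ and $(\gamma_1^{-1},\1)\cdot P_{\gamma_2}=P_{\gamma_2\gamma_1^{-1}}$ by the duality $(\alpha,\beta)\cdot P_\gamma=P_{\alpha\gamma\beta^{-1}}$. Since this is an isometry of $\AdS^3$, $P_{\gamma_1}$ and $P_{\gamma_2}$ intersect in $\AdS^3$ if and only if $P_{\1}$ and $P_{\sigma}$ do, where $\sigma:=\gamma_2\gamma_1^{-1}\neq\1$. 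So it suffices to show: $P_{\1}\cap P_{\sigma}\neq\emptyset$ in $\AdS^3$ if and only if $\sigma$ is hyperbolic.

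\textbf{Key computation.} Next I would unwind what $P_{\1}\cap P_\sigma$ means concretely. A point of $P_{\1}$ is (the projective class of) a traceless unit matrix $X$, i.e. an order-two elliptic isometry $X=\mathcal R_z$ for some $z\in\Hyp^2$, by Example \ref{ex:traceless}. The condition $X\in P_\sigma$ is $\langle X,\sigma\rangle=0$, i.e. $\tr(X\cdot\mathrm{adj}(\sigma))=0$ by \eqref{eq:product}. Since $\det(\sigma)=1$ we have $\mathrm{adj}(\sigma)=\sigma^{-1}$, so the condition becomes $\tr(X\sigma^{-1})=0$, equivalently $\tr(\sigma^{-1}X)=0$. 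Now $\sigma^{-1}X$ has determinant $1$, so being traceless is equivalent (Cayley--Hamilton) to $(\sigma^{-1}X)^2=-\1$, i.e. $\sigma^{-1}X$ is itself an order-two elliptic element of $\PSL(2,\R)$. Write $\sigma^{-1}X=Y$ with $Y^2=-\1$; then $\sigma=XY^{-1}=-XY$ (in $\SL(2,\R)$), a product of two order-two elliptics, i.e. $\sigma=\mathcal R_z\circ\mathcal R_w$ for $z=\Fix(X)$ and $w=\Fix(Y)$. Hence $P_{\1}\cap P_\sigma\neq\emptyset$ in $\AdS^3$ if and only if $\sigma$ can be written as a product of two order-two elliptic isometries of $\Hyp^2$ with distinct fixed points (distinct because $\sigma\neq\1$).

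\textbf{Conclusion via hyperbolic geometry.} It then remains to invoke the classical fact that a non-trivial orientation-preserving isometry $\sigma$ of $\Hyp^2$ is hyperbolic if and only if it is a product $\mathcal R_z\circ\mathcal R_w$ of two distinct order-two rotations (equivalently, a product of two reflections in disjoint geodesics, namely the geodesics through $z$ and through $w$ perpendicular to the geodesic $zw$), the axis being the geodesic through $z$ and $w$ and the translation length being twice the distance $d(z,w)$; whereas products of two half-turns about points on an intersecting pair of perpendicular geodesics give elliptic elements and about points with a common endpoint at infinity give parabolics. So $P_{\1}\cap P_\sigma\neq\emptyset$ in $\AdS^3$ exactly when $\sigma$ is hyperbolic, which by the reduction above is exactly when $\gamma_2\gamma_1^{-1}$ is hyperbolic. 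The main thing to be careful about is the bookkeeping between $\SL(2,\R)$ and $\PSL(2,\R)$ (the sign ambiguities in $X^2=-\1$ and $\mathrm{adj}(\sigma)=\sigma^{-1}$), and checking that the points $z,w$ produced are genuinely distinct precisely when $\sigma\neq\1$; neither of these is a serious obstacle. One should also note that when the two order-two elliptics coincide ($z=w$) one gets $\sigma=\1$, consistent with the excluded case, and that intersections ``at infinity'' of $P_{\1}$ and $P_\sigma$ (corresponding to $\sigma$ parabolic or the elliptic case at a boundary point) are correctly not counted since we only ask for intersection inside $\AdS^3$.
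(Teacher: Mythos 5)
Your argument is correct, but it takes a genuinely different route from the paper. The paper stays at the boundary at infinity: since $P_{\gamma_i}$ is the convex envelope of $\partial_\infty P_{\gamma_i}=\gr(\gamma_i^{-1})$ (Example \ref{ex:plane}), equation \eqref{eq:shape} shows that $P_{\gamma_1}\cap P_{\gamma_2}\neq\emptyset$ in $\AdS^3$ exactly when $\gr(\gamma_1^{-1})\cap\gr(\gamma_2^{-1})$ contains at least two points, i.e.\ when $\gamma_2\circ\gamma_1^{-1}$ has two fixed points in $\RP^1$, i.e.\ is hyperbolic. A by-product of that argument, used repeatedly later (Proposition \ref{prop:past vs future}, Corollary \ref{cor:all support}), is that the endpoints of the intersection geodesic are $(x,\gamma_i^{-1}(x))$ for $x$ the fixed points of $\gamma_2\circ\gamma_1^{-1}$. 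Your proof instead works in the interior: after normalizing one plane to $P_\1$, you identify points of $P_\1\cap P_\sigma$ with factorizations $\sigma=\mathcal R_z\circ\mathcal R_w$ into two distinct half-turns, and invoke the classical fact that a nontrivial element of $\PSL(2,\R)$ admits such a factorization precisely when it is hyperbolic (with axis through $z,w$ and translation length $2d(z,w)$). This is a clean algebraic alternative, and it even exhibits the intersection geodesic as the set of half-turns about points of the axis of $\sigma$; its only external input is the half-turn factorization lemma, whereas the paper's proof reuses machinery already in place (Lemma \ref{lem:spacelike planes} and \eqref{eq:shape}) and directly records the boundary data needed afterwards.

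Two small corrections to your bookkeeping. First, with your own duality rule $(\alpha,\beta)\cdot P_\gamma=P_{\alpha\gamma\beta^{-1}}$, the isometry $(\gamma_1^{-1},\1)$ sends $P_{\gamma_2}$ to $P_{\gamma_1^{-1}\gamma_2}$, not to $P_{\gamma_2\gamma_1^{-1}}$; to land on $P_{\gamma_2\gamma_1^{-1}}$ you should use $(\1,\gamma_1)$, as the paper does in the proof of Proposition \ref{prop:past vs future}. This is harmless here, since $\gamma_1^{-1}\gamma_2$ and $\gamma_2\gamma_1^{-1}$ are conjugate and hence of the same type, but it should be fixed. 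Second, your closing aside is garbled: a product of half-turns about two \emph{distinct} points is always hyperbolic; the elliptic and parabolic cases arise for products of \emph{reflections} in intersecting, respectively asymptotic, geodesics, not for products of half-turns. Since the ``if and only if'' you actually invoke is stated correctly, this does not affect the proof.
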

\begin{proof}
Since $P_{\gamma_i}$ is the convex envelope of $\partial_\infty P_{\gamma_i}=\gr(\gamma_i^{-1})$ (Example \ref{ex:plane}), the closures $\overline P_{\gamma_1}$ and $\overline P_{\gamma_2}$ intersect in $\overline\AdS{}^3$ if and only if $\gr(\gamma_1^{-1})\cap\gr(\gamma_2^{-1})\neq\emptyset$. Moreover, by \eqref{eq:shape}, $P_{\gamma_1}$ and $P_{\gamma_2}$ intersect in $\AdS^3$ if and only $\gr(\gamma_1^{-1})\cap\gr(\gamma_2^{-1})$ contains at least two different points. 

Now, $(x,y)\in\RP^1\times\RP^1$ is in $\gr(\gamma_1^{-1})\cap\gr(\gamma_2^{-1})$ if and only if $y=\gamma_1^{-1}(x)=\gamma_2^{-1}(x)$, which is equivalent to the condition that $x$ is a fixed point of $\gamma_2\circ\gamma_1^{-1}$. But $\gamma_2\circ\gamma_1^{-1}$ is an element of $\PSL(2,\R)$, hence it has two fixed points in $\RP^1$ if and only if it is a hyperbolic isometry.
\end{proof}

Now, let $I_1$ and $I_2$ be two closed intervals in $\RP^1$ such that $\RP^1=I_1\cup I_2
$ and $I_1\cap I_2$ consists precisely of the two fixed points of $\gamma_2\circ \gamma_1^{-1}$. Clearly there are two possibilities to produce a homeomorphism of $\RP^1$ by combining the restrictions of  $\gamma_1^{-1}$ and $\gamma_2^{-1}$ to the intervals $I_j$'s, that is:
\begin{equation}\label{eq:f+-}
f^+_{\gamma_1,\gamma_2}(x)=\begin{cases} \gamma_1^{-1} &\text{ if }x\in I_1 \\ \gamma_2^{-1} &\text{ if }x\in I_2 \end{cases}
\qquad\text{and}\qquad f^-_{\gamma_1,\gamma_2}(x)=\begin{cases}  \gamma_2^{-1} &\text{ if }x\in I_1 \\ \gamma_1^{-1} &\text{ if }x\in I_2 \end{cases}~.
\end{equation}
One easily checks that $f^\pm_{\gamma_1,\gamma_2}$ actually are orientation-preserving homeomorphisms, since $\gamma_1^{-1}$ and $\gamma_2^{-1}$ map homeomorphically the intervals $I_1$ and $I_2$ to the same intervals $J_1:=\gamma_1^{-1}(I_1)=\gamma_2^{-1}(I_1)$ and $J_2:=\gamma_1^{-1}(I_2)=\gamma_2^{-1}(I_2)$, which intersect only at their endpoints.

Let us also denote by $D_i$ the convex hull of $I_i$ in $\Hyp^2$, and by $\ell=D_1\cap D_2$ the axis of $\gamma_2\circ \gamma_1^{-1}$.

\begin{proposition}\label{prop:past vs future}
Suppose that $\gamma_2\circ \gamma_1^{-1}$ is a hyperbolic isometry that translates along $\ell$ to the left, as seen from $D_1$ to $D_2$. Then:
\begin{itemize}
\item The future boundary component $\partial_+\con(f^+_{\gamma_1,\gamma_2})$ coincides with the union of the convex envelope of $\gr(\gamma_1^{-1}|_{I_1})$ and of the convex envelope of $\gr(\gamma_2^{-1}|_{I_2})$.
\item The past boundary component $\partial_-\con(f^-_{\gamma_1,\gamma_2})$ is the union of the convex envelope of $\gr(\gamma_1^{-1}|_{I_2})$ and of the convex envelope of $\gr(\gamma_2^{-1}|_{I_1})$. 
\end{itemize}
If instead $\gamma_2\circ \gamma_1^{-1}$ translates along $\ell$ to the right as seen from $D_1$ to $D_2$, then:
\begin{itemize}
\item The past boundary component $\partial_-\con(f^+_{\gamma_1,\gamma_2})$ coincides with the union of the convex envelope of $\gr(\gamma_1^{-1}|_{I_1})$ and of the convex envelope of $\gr(\gamma_2^{-1}|_{I_2})$.
\item The future boundary component $\partial_+\con(f^-_{\gamma_1,\gamma_2})$ is the union of the convex envelope of $\gr(\gamma_1^{-1}|_{I_2})$ and of the convex envelope of $\gr(\gamma_2^{-1}|_{I_1})$. 
\end{itemize}
\end{proposition}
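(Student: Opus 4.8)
The plan is to prove the first assertion --- that $\partial_+\con(f^+_{\gamma_1,\gamma_2})$ is the stated union of two half-planes when $\gamma_2\circ\gamma_1^{-1}$ translates to the left --- and to obtain the other three by symmetry. Indeed, exchanging the roles of $\gamma_1$ and $\gamma_2$ turns $f^+_{\gamma_1,\gamma_2}$ into $f^-_{\gamma_1,\gamma_2}$ and, since $\gamma_1\circ\gamma_2^{-1}=(\gamma_2\circ\gamma_1^{-1})^{-1}$ translates along $\ell$ in the opposite direction, turns ``left'' into ``right''; and the inversion $\gamma\mapsto\gamma^{-1}$ is an isometry of $\AdS^3$ whose differential at $\1$ is $-\id$, so it reverses the time-orientation, acts on $\partial_\infty\AdS^3\cong\RP^1\times\RP^1$ by $(x,y)\mapsto(y,x)$, sends $P_\gamma$ to $P_{\gamma^{-1}}$ and $\con(g)$ to $\con(g^{-1})$, hence swaps $\partial_+$ with $\partial_-$. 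Combining these two moves, the four assertions become equivalent.

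Assume then that $\gamma_2\circ\gamma_1^{-1}$ translates to the left as seen from $D_1$ to $D_2$, and put $H_i:=\con(\gr(\gamma_i^{-1}|_{I_i}))$. Since $\gr(\gamma_i^{-1}|_{I_i})\subseteq\gr(\gamma_i^{-1})=\partial_\infty P_{\gamma_i}$ and $P_{\gamma_i}$ is convex (Example~\ref{ex:plane}), we have $H_i\subseteq P_{\gamma_i}$, and under the identification $P_{\gamma_i}\cong\Hyp^2$ of Lemma~\ref{lem:spacelike planes}, $H_i$ is the half-plane bounded by the geodesic whose ideal endpoints are the two fixed points of $\gamma_2\circ\gamma_1^{-1}$, i.e. by the spacelike geodesic $\ell':=P_{\gamma_1}\cap P_{\gamma_2}$ (Lemma~\ref{lemma intersection two planes}). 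As $P_{\gamma_1}\cap P_{\gamma_2}=\ell'$, the union $S:=H_1\cup H_2$ is a closed topological disk in $\overline\AdS{}^3$ --- two totally geodesic half-planes meeting along $\ell'$ --- with $S\cap\partial_\infty\AdS^3=\gr(f^+_{\gamma_1,\gamma_2})$. I would then verify that $P_{\gamma_1}$ and $P_{\gamma_2}$ are support planes of $\con(f^+_{\gamma_1,\gamma_2})$: since $H_i\subseteq\con(\gr f^+_{\gamma_1,\gamma_2})=\con(f^+_{\gamma_1,\gamma_2})$, it is enough to see that $\con(f^+_{\gamma_1,\gamma_2})$ lies on one closed side of $\mathcal P_{\gamma_1}$ in a suitable affine chart $\mathcal A_\gamma$; but the portion of $\gr(f^+_{\gamma_1,\gamma_2})$ not contained in $P_{\gamma_1}$ is the closed arc $\gr(\gamma_2^{-1}|_{I_2})$, which meets $\partial_\infty P_{\gamma_1}=\gr(\gamma_1^{-1})$ only at the two endpoints of $I_2$, so its interior is connected and disjoint from $\mathcal P_{\gamma_1}$ and thus lies in the closure of one component of $\mathcal A_\gamma\setminus\mathcal P_{\gamma_1}$. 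Hence $P_{\gamma_1}$ --- and symmetrically $P_{\gamma_2}$ --- is a support plane, $S\subseteq\partial\con(f^+_{\gamma_1,\gamma_2})$, and by \eqref{eq:shape} moreover $\con(f^+_{\gamma_1,\gamma_2})\cap P_{\gamma_i}=H_i$; in particular, near $\ell'$, $\con(f^+_{\gamma_1,\gamma_2})$ lies on the side of $P_{\gamma_1}$ that contains $H_2$.

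The crux is to show that this side is the past side, equivalently that $P_{\gamma_1}$ is a \emph{future} support plane --- and this is where ``left'' is used. Since $S\cap\AdS^3$ is connected and contained in $\partial\con(f^+_{\gamma_1,\gamma_2})\cap\AdS^3=\partial_+\con(f^+_{\gamma_1,\gamma_2})\sqcup\partial_-\con(f^+_{\gamma_1,\gamma_2})$, it suffices to check this at one point $p\in\ell'$. By Lemma~\ref{lem:left right proj} one may normalize $\gamma_1=\1$ and $\gamma_2\colon x\mapsto\mu x$; testing the definition of ``translates to the left'' on a path through $i$ shows that one may further assume $0<\mu<1$ with $I_1=\{x\geq0\}\cup\{\infty\}$ and $I_2=\{x\leq0\}\cup\{\infty\}$. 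Taking $p=\mathcal R_i\in\ell'$, represented in $\PSL(2,\R)$ by the matrix $U$, and trivializing $T_p\AdS^3$ onto $\psl(2,\R)$ via $dL_{U^{-1}}$ (an isometry preserving the time-orientation, since $L_U=(U,\1)\in\PSL(2,\R)\times\PSL(2,\R)$), a direct computation gives $T_pP_{\gamma_1}=\mathrm{span}(V,W)$ and shows that the future cone at $p$ lies on the side of this plane \emph{opposite} to the vector at $p$ that is tangent to $P_{\gamma_2}$, transverse to $\ell'$, and pointing into $H_2$. Consequently $\con(f^+_{\gamma_1,\gamma_2})$ --- which near $p$ lies on the $H_2$ side of $P_{\gamma_1}$ --- lies on the past side, $P_{\gamma_1}$ is a future support plane, $p\in\partial_+\con(f^+_{\gamma_1,\gamma_2})$, and therefore $S\cap\AdS^3\subseteq\partial_+\con(f^+_{\gamma_1,\gamma_2})$. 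To conclude, $S\cap\AdS^3$ --- two closed half-planes glued along $\ell'$, hence a topological surface without boundary --- is embedded in the topological disk $\partial_+\con(f^+_{\gamma_1,\gamma_2})$ (it is a disk by Corollary~\ref{cor:past or future}, as $f^+_{\gamma_1,\gamma_2}\notin\PSL(2,\R)$ because $\gamma_1\neq\gamma_2$); by invariance of domain it is open there, and being closed in $\AdS^3$ it is closed in $\partial_+\con(f^+_{\gamma_1,\gamma_2})$; a nonempty clopen subset of a connected disk is the whole disk, which gives the claim.

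The main obstacle, in my view, is the causal computation of the previous paragraph --- establishing that $P_{\gamma_1}$ is a future and not a past support plane. The difficulty is not conceptual but bookkeeping: one must track carefully the several orientation and time-orientation conventions involved (the orientation of $\Hyp^2$, the orientation and time-orientation of $\AdS^3$ fixed in \eqref{eq:orientation}, and the exact meaning of ``translates to the left as seen from $D_1$ to $D_2$''), so as to get the relevant sign right.
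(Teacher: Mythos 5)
Your overall architecture is sound and, for the core case, quite close to the paper's: you identify the pleated disc $S=H_1\cup H_2$, show that $P_{\gamma_1}$ and $P_{\gamma_2}$ are support planes, decide future versus past at one point of $\ell'$, and conclude by a topological (clopen subset of a disc) argument; the reduction of the remaining three statements to the first, by exchanging $\gamma_1,\gamma_2$ and by the inversion $g\mapsto g^{-1}$, is legitimate (for the latter it is worth saying explicitly that inversion is induced by the adjugate, a \emph{linear} map on $\mat$, hence acts projectively and carries convex hulls and support planes to convex hulls and support planes, besides reversing the time-orientation). The genuine gap is at what you yourself call the crux: the assertion that ``a direct computation \dots shows that the future cone at $p$ lies on the side of $T_pP_{\gamma_1}$ opposite to the vector pointing into $H_2$'' is precisely the step where the hypothesis ``translates to the left'' gets converted into the conclusion ``future boundary component'', and it is stated rather than carried out --- indeed you flag the sign bookkeeping as the main unresolved obstacle. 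As written, the proposal reduces the proposition to an unverified sign, so it is not yet a proof. (A minor slip: the normalization $\gamma_1=\1$, $\gamma_2(x)=\mu x$ should be justified by the action of the time-orientation-preserving isometry group $\PSL(2,\R)\times\PSL(2,\R)$ via \eqref{eq:action graph}, not by Lemma \ref{lem:left right proj}, which concerns the projections.)

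For what it is worth, the sign you assert is correct, and your computation can be closed essentially as you indicate. With your normalization ($\gamma_1=\1$, $\gamma_2$ represented by $A_\mu=\mathrm{diag}(\sqrt\mu,1/\sqrt\mu)$, $0<\mu<1$, $I_2=[-\infty,0]$), one has $P_{\gamma_2}=\{\mathcal R_z A_\mu\,:\,z\in\Hyp^2\}$, whose ideal boundary map is $x\mapsto (x,\gamma_2^{-1}(x))$ by \eqref{eq:convergence}; hence $H_2$ corresponds to $\mathrm{Re}(z)\leq 0$, and the path $s\mapsto \mathcal R_{i\sqrt\mu-s}\,A_\mu$ leaves $p=\mathcal R_i=[U]$ into $H_2$. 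Its velocity, left-trivialized by $dL_{U^{-1}}$, is $\mu^{-1}\left(\begin{smallmatrix}0&1\\ \mu&0\end{smallmatrix}\right)$. Since $\langle M,U\rangle=\tfrac12(b-c)$ for $M=\left(\begin{smallmatrix}a&b\\ c&d\end{smallmatrix}\right)$, the functional $\langle\cdot,U\rangle$ vanishes on $T_pP_\1=\mathrm{span}(V,W)$, equals $\tfrac12(\mu^{-1}-1)>0$ on this $H_2$-direction, and equals $-1$ on the future-pointing vector $U$; so near $p$ the convex hull, which lies on the $H_2$-side of $P_\1$, is in the past of $P_\1$, i.e.\ $P_\1$ is a future support plane. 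Note that the paper handles exactly this point by a different and arguably slicker device: it considers the family $\sigma_t$ of clockwise rotations about a fixed $z_0$, observes that $P_{\sigma_t}\cap\con(f^+_{\1,\gamma})=\emptyset$ for $t>0$ (because $f^+_{\1,\gamma}$ fixes $I_1$ and moves $I_2$ clockwise, while $\sigma_t^{-1}$ moves every point counterclockwise), so that $P_\1$ is a support plane by Remark \ref{rem:equivalent def support}, and it is a \emph{future} one because the clockwise-rotation path is future-directed by the convention fixed in \eqref{eq:orientation}. That argument gives the support-plane property and its futureness in one stroke and confines the handedness bookkeeping to the single convention that clockwise rotations are future-directed; you may wish to adopt it in place of, or as a check on, your pointwise cone computation.
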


\begin{figure}[htb]
\centering
\begin{minipage}[c]{.5\textwidth}
\centering
\includegraphics[height=7cm]{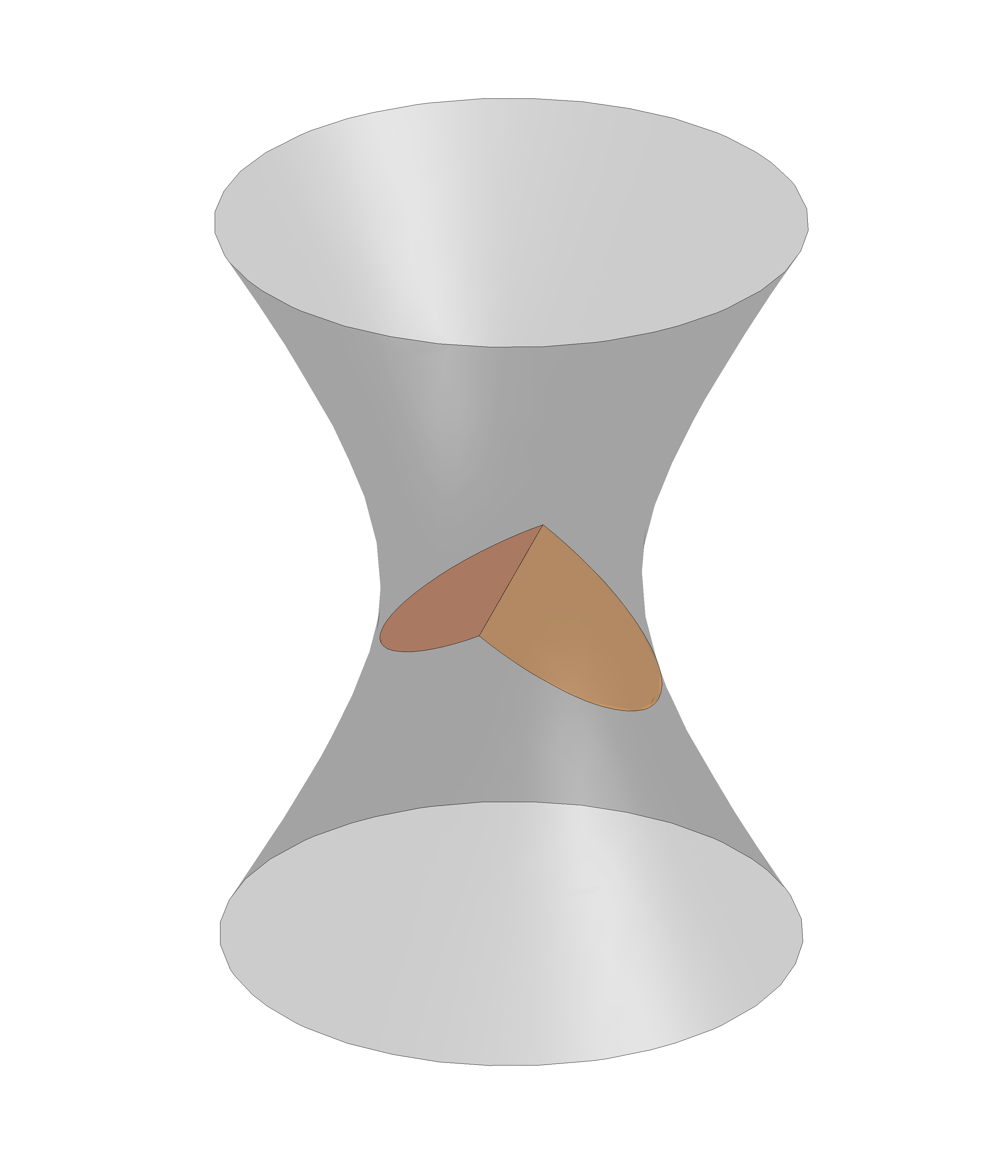}
\end{minipage}%
\begin{minipage}[c]{.5\textwidth}
\centering
\includegraphics[height=7cm]{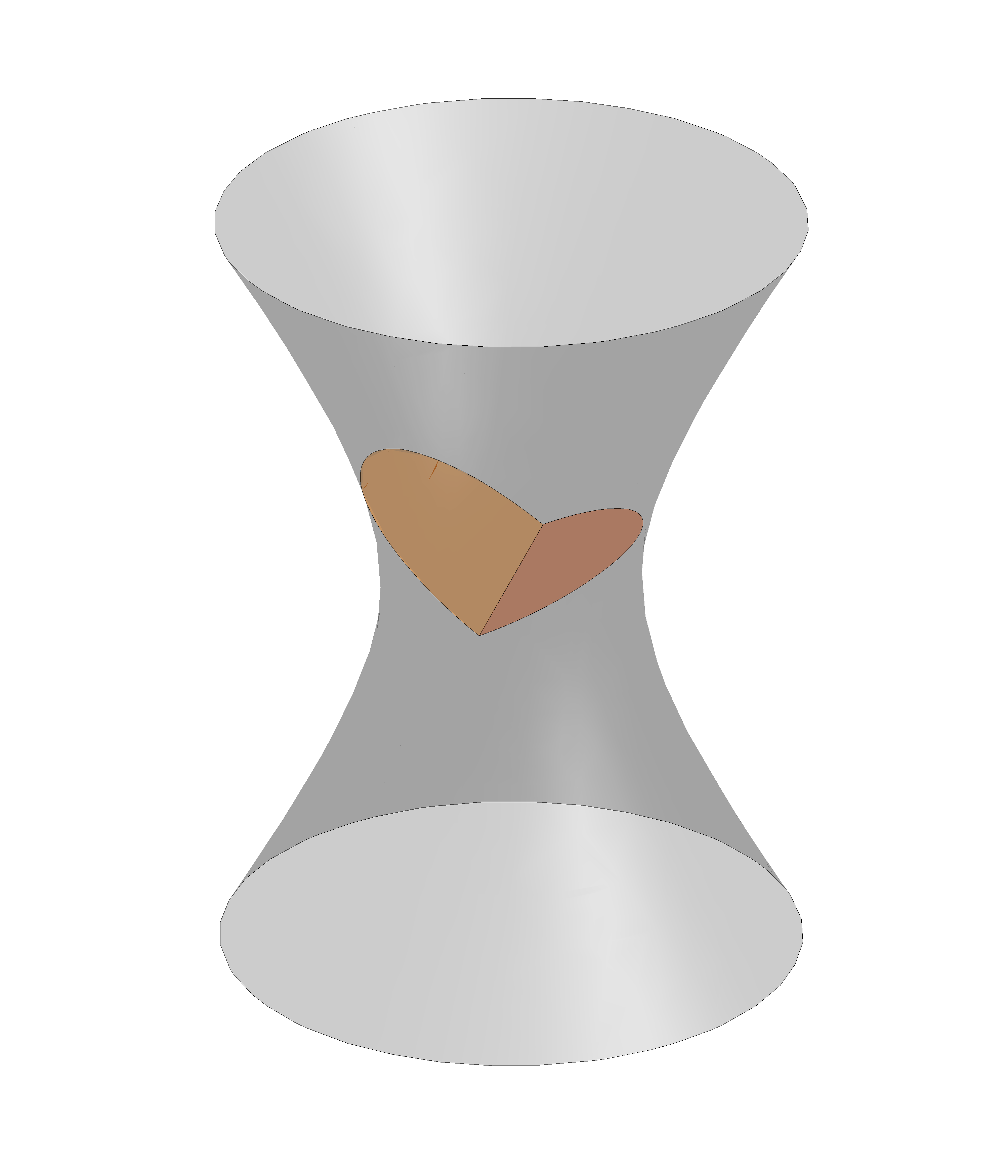}
\end{minipage}
\caption{Given two elements $\gamma_1,\gamma_2$ such that $\gamma_2\circ \gamma_1^{-1}$ is a hyperbolic isometry, there are two possible configurations. On the left, we see the future boundary component of $\con(f^\pm_{\gamma_1,\gamma_2})$, on the right the past boundary component of $\con(f^\mp_{\gamma_1,\gamma_2})$.}
\label{fig:pleated}       
\end{figure}

\begin{proof}
Let us consider the case where $\gamma_2\circ \gamma_1^{-1}$ translates  to the left along $\ell$, and let us prove the first item. Let $x,x'$ be the fixed points of $\gamma_2\circ \gamma_1^{-1}$, and let $y=\gamma_1^{-1}(x)=\gamma_2^{-1}(x)$ and $y'=\gamma_1^{-1}(x')=\gamma_2^{-1}(x')$. Then the convex envelope of $\gr(\gamma_i^{-1}|_{I_i})$ is a half-plane $A_i$ in $P_{\gamma_i}$ bounded by the geodesic $P_{\gamma_1}\cap P_{\gamma_2}$, which has endpoints $(x,y)$ and $(x',y')$.  Clearly both the convex envelope of $\gr(\gamma_1^{-1}|_{I_1})$ and the convex envelope of $\gr(\gamma_2^{-1}|_{I_2})$ are  contained in $\con(f^+_{\gamma_1,\gamma_2})$. 

Nevertheless, we can be more precise.  We claim that $P_{\gamma_1} $ and $P_{\gamma_2}$ are \emph{future support planes} for $\con(f^+_{\gamma_1,\gamma_2})$. This claim implies that the union of $A_1$ and $A_2$ is contained in the future boundary component $\partial_+\con(f^+_{\gamma_1,\gamma_2})$, because every point $p\in A_1\cup A_2$ admits a future support plane through $p$, which is either $P_{\gamma_1} $ or $P_{\gamma_2}$. However $A_1\cup A_2$ is a topological disc in $\partial_+\con(f^+_{\gamma_1,\gamma_2})$, whose boundary is precisely the curve $\gr(f^+_{\gamma_1,\gamma_2})$ by construction. Hence the claim will imply that $A_1\cup A_2=\partial_+\con(f^+_{\gamma_1,\gamma_2})$.

We prove the claim for $P_{\gamma_1}$, the proof for $P_{\gamma_2}$ being completely analogous.
 it is convenient to assume that $\gamma_1=\1$ and $\gamma_2=\gamma$ is a hyperbolic isometry with fixed points $x$ and $x'$, translating to the left seen from $D_1$ to $D_2$. Indeed, we can apply the isometry $(\1,\gamma_1)$, which sends $P_{\gamma_1}$ to $P_\1$, $P_{\gamma_2}$ to $P_{\gamma_2\gamma_1^{-1}}$, and (by \eqref{eq:action graph}) $\gr(f^+_{\gamma_1,\gamma_2})$ to $\gr(f^+_{\1,\gamma_2\gamma_1^{-1}})$.

Having made this assumption, consider a path $\sigma_t$, for $t\in [0,\epsilon)$ of elliptic elements fixing a given point $z_0\in\Hyp^2$, that rotate clockwise by an angle $t$. As in the proof of Lemma \ref{lemma intersection two planes}, the planes $P_{\sigma_t}$ are pairwise disjoint in $\overline\AdS{}^3$, because  $\sigma_{t_2}\circ\sigma_{t_1}^{-1}$ is still an elliptic element fixing $z_0$ for $t_1\neq t_2$, hence it has no fixed point in $\RP^1$. 
Moreover, observe that $\gamma^{-1}$ is an isometry fixing $\ell$ and translates along $\ell$ on the right as seen from $D_1$ to $D_2$. 
Since $f^+_{\1,\gamma}$ equals the identity on $I_1$ and  $\gamma^{-1}$ on $I_2$, it fixes $I_1$ pointwise, and moves points of $I_2$ clockwise. In particular, the equation $f^+_{\1,\gamma}(x)=\sigma_t^{-1}(x)$ has no solutions for $t>0$, because $\sigma_t^{-1}=\sigma_{-t}$ moves all points counterclockwise if $t$ is positive. 
This shows that  $P_{\sigma_t}\cap \con(f^+_{\1,\gamma})=\emptyset$ for $t>0$, and thus $P_\1$ is a support plane for $\con(f^+_{\1,\gamma})$ by Remark \ref{rem:equivalent def support}. Moreover it is a future support plane: indeed one can check (for instance using \eqref{eq:product}) that $\sigma_{t+\pi/2}=\mathcal R_{z_0}\circ\sigma_t\in P_{\sigma_t}$, and the path $t\mapsto\sigma_t$ is future-directed  because, from the discussion after \eqref{eq:orientation}, its tangent vector is future-directed, hence $\con(f^+_{\1,\gamma})$ is locally  in the past of $P_\1$. 

This concludes the proof of the first point. The other cases are completely analogous.

\end{proof}

See also Figure \ref{fig:pleated} to visualize the different configurations. The following is an important consequence of the proof of Proposition \ref{prop:past vs future}. 

\begin{corollary}\label{cor:all support}
Suppose that $\gamma_2\circ \gamma_1^{-1}$ is a hyperbolic isometry that translates along $\ell$ to the left (resp. right), as seen from $D_1$ to $D_2$, and write $\gamma_2\circ \gamma_1^{-1}=\exp(\mathfrak a)$ for $\mathfrak a\in\psl(2,\R)$. Let $p$ be a point in the future (resp. past) boundary component of $\con(f^+_{\gamma_1,\gamma_2})$. Then:
\begin{itemize}
\item If $p\in \mathrm{int}(A_1)$, then $P_{\gamma_1}$ is the unique support plane of $\con(f^+_{\gamma_1,\gamma_2})$ at $p$.
\item If $p\in \mathrm{int}(A_2)$, then $P_{\gamma_2}$ is the unique  support plane of $\con(f^+_{\gamma_1,\gamma_2})$ at $p$.
\item If $p\in A_1\cap A_2=P_{\gamma_1}\cap P_{\gamma_2}$, then the support planes of $\con(f^+_{\gamma_1,\gamma_2})$ at $p$ are precisely those of the form $P_{\sigma\gamma_1}$ where $\sigma=\exp(t\mathfrak a)$ for $t\in [0,1]$.
\end{itemize}
\end{corollary}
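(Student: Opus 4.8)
The plan is to reduce to a normalized configuration and then read off the answer from the geometry of $\con(f^+_{\gamma_1,\gamma_2})$ near its bending geodesic. As in the proof of Proposition \ref{prop:past vs future}, apply the isometry $(\1,\gamma_1)\in\PSL(2,\R)\times\PSL(2,\R)$: it sends $P_\mu$ to $P_{\mu\gamma_1^{-1}}$, carries $\con(f^+_{\gamma_1,\gamma_2})$ to $\con(f^+_{\1,\gamma})$ with $\gamma:=\gamma_2\gamma_1^{-1}=\exp(\mathfrak a)$, sends $P_{\sigma\gamma_1}$ to $P_\sigma$, and takes support planes (future ones) to support planes (future ones). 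So we may assume $\gamma_1=\1$ and $\gamma_2=\gamma$ is hyperbolic with fixed points $x,x'$, translating left from $D_1$ to $D_2$. By Proposition \ref{prop:past vs future}, $\partial_+\con(f^+_{\1,\gamma})=A_1\cup A_2$, with $A_1$ a half-plane of $P_\1$ and $A_2$ a half-plane of $P_\gamma$, both bounded by the geodesic $\ell^*:=P_\1\cap P_\gamma$, whose endpoints in $\partial_\infty\AdS^3$ are $(x,x)$ and $(x',x')$. For the first two bullets, let $p\in\mathrm{int}(A_1)$. Then $P_\1$ is a support plane at $p$ by Proposition \ref{prop:past vs future}, and any support plane $Q$ at $p$ is spacelike by Proposition \ref{prop:support planes space or light} (as $p\in\AdS^3$). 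In an affine chart containing $\gr(f^+_{\1,\gamma})$, write $Q=\{\phi=0\}$, $\con(f^+_{\1,\gamma})\subseteq\{\phi\le 0\}$. Since $A_1$ is a neighbourhood of $p$ inside the plane $P_\1$, the affine function $\phi|_{P_\1}$ has an interior maximum at $p$, hence vanishes on $P_\1$; thus $P_\1\subseteq Q$ and $Q=P_\1$. The case $p\in\mathrm{int}(A_2)$ is identical with $P_\gamma$ in place of $P_\1$.

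For the third bullet, take $p\in\ell^*=A_1\cap A_2$. I first claim every support plane at $p$ contains $\ell^*$: it is spacelike, say $Q=P_\mu$, and since $\overline{\ell^*}\subseteq\con(f^+_{\1,\gamma})\subseteq\{\phi\le 0\}$ is a line segment of the affine chart through the interior point $p$, the argument above forces $\phi\equiv 0$ on $\overline{\ell^*}$, so $(x,x),(x',x')\in\partial_\infty P_\mu=\gr(\mu^{-1})$. Hence $\mu^{-1}(x)=x$ and $\mu^{-1}(x')=x'$, which forces $\mu\in\{\exp(t\mathfrak a):t\in\R\}$, the subgroup of $\PSL(2,\R)$ fixing each of $x$ and $x'$ (since $\exp(\mathfrak a)=\gamma$ is hyperbolic with axis $\ell$, the one-parameter group $\{\exp(t\mathfrak a)\}$ is exactly this stabilizer). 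It remains to decide for which $t\in\R$ the plane $P_{\exp(t\mathfrak a)}$ is a support plane at $p$.

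For $t\in\{0,1\}$ this is Proposition \ref{prop:past vs future}. For $t\notin\{0,1\}$, computing the fixed points of $\exp(-t\mathfrak a)$ and of $\gamma\exp(-t\mathfrak a)=\exp((1-t)\mathfrak a)$ shows that $\partial_\infty P_{\exp(t\mathfrak a)}=\gr(\exp(-t\mathfrak a))$ meets $\gr(f^+_{\1,\gamma})=\gr(\1|_{I_1})\cup\gr(\gamma^{-1}|_{I_2})$ only at the two corner points $(x,x)$ and $(x',x')$. Hence the two open arcs into which these corners divide $\gr(f^+_{\1,\gamma})$ each lie in one of the two components of $\overline\AdS{}^3\setminus\overline{P_{\exp(t\mathfrak a)}}$, and $P_{\exp(t\mathfrak a)}$ is a support plane of $\con(f^+_{\1,\gamma})$ if and only if both arcs lie in the \emph{same} component. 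This is decided by the local picture at a corner, say $(x,x)$: in a chart near $x$ on $\RP^1$ in which $\1$ is the identity, the two branches of $\gr(f^+_{\1,\gamma})$ at $(x,x)$ are tangent to lines of slopes $1$ and $\gamma'(x)^{-1}$, while $\gr(\exp(-t\mathfrak a))$ is a smooth curve with tangent slope $\gamma'(x)^{-t}$ there; and $\gamma'(x)^{-t}$ lies strictly between $1$ and $\gamma'(x)^{-1}$ precisely when $t\in(0,1)$, which is exactly the condition that this tangent falls inside the corner wedge, i.e. that the two branches lie on the same side of $\gr(\exp(-t\mathfrak a))$. Therefore $P_{\exp(t\mathfrak a)}$ is a support plane at $p$ if and only if $t\in[0,1]$, and undoing the reduction the support planes at $p$ are exactly the $P_{\exp(t\mathfrak a)\gamma_1}=P_{\sigma\gamma_1}$ with $\sigma=\exp(t\mathfrak a)$, $t\in[0,1]$. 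The past version follows by the symmetric argument, exchanging future and past support planes and the two cases of \eqref{eq:f+-}.

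The hard part is the last paragraph. Making it rigorous requires: (i) the identification of the pointwise stabilizer of $\{x,x'\}$ with the one-parameter group $\{\exp(t\mathfrak a)\}$; (ii) the transversality computation at the corners of $\gr(f^+_{\1,\gamma})$, including checking that away from the corners the graphs are disjoint; and (iii) the topological fact that a spacelike plane, together with its boundary circle, separates $\overline\AdS{}^3$ into exactly two pieces, so that ``the two branches on the same side'' is equivalent to ``$\con(f^+_{\1,\gamma})$ lies in a closed half-space bounded by $P_{\exp(t\mathfrak a)}$''. A convenient alternative to (ii)--(iii) for the $t\in(0,1)$ direction only is to mimic the perturbation argument of Proposition \ref{prop:past vs future} using the family $P_{\sigma_s^{-1}\exp(t\mathfrak a)}$ for suitable clockwise rotations $\sigma_s$, but then one must still rule out $t\notin[0,1]$, for which the corner computation seems unavoidable.
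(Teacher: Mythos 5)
Your proposal is correct and, for most of its length, runs parallel to the paper's proof: the normalization $\gamma_1=\1$ via the isometry $(\1,\gamma_1)$, the uniqueness of the support plane at interior points of $A_i$ (the paper simply invokes smoothness of $P_{\gamma_i}$; your affine-function argument is the standard justification of that), and the reduction of the third bullet to the one-parameter family $P_{\exp(t\mathfrak a)}$ by showing that a support plane at $p\in A_1\cap A_2$ must contain the closed segment $\overline{P_\1\cap P_\gamma}$ and hence fix $x$ and $x'$ (the paper obtains the same conclusion from \eqref{eq:shape}). Where you genuinely diverge is in pinning down the range $t\in[0,1]$: the paper re-runs the elliptic-perturbation argument of Proposition \ref{prop:past vs future} to conclude that $P_\sigma$ is a future support plane exactly when $\sigma$ translates to the left along $\ell$ with translation length at most that of $\gamma$, whereas you read the answer off the boundary torus, checking that $\gr(\exp(-t\mathfrak a))$ meets $\gr(f^+_{\1,\gamma})$ only at the two corners and comparing sides there. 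Your route treats the inclusion $t\in[0,1]$ and the exclusion $t\notin[0,1]$ uniformly and makes the exclusion explicit, which the paper leaves to ``an analogous argument''; the cleanest way to close your step (ii) is not the slope comparison but the monotonicity of the flow, namely that for $u\notin\{x,x'\}$ the point $\exp(-t\mathfrak a)(u)$ lies strictly between $u$ and $\gamma^{-1}(u)$ if and only if $t\in(0,1)$, which immediately gives the sign of the relevant differences on each branch.

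One correction to your item (iii): it is \emph{not} true that $\overline{P_{\exp(t\mathfrak a)}}$ separates $\overline\AdS{}^3$ into two pieces. Indeed $\overline\AdS{}^3$ is a solid torus and the closure of a spacelike plane is a properly embedded disc whose boundary is a meridian, so the complement is a ball, hence connected (already $\AdS^3\setminus P_\1$ is connected: the circle of rotations fixing $i$ meets $P_\1$ in a single point). What you actually need, and what your computation delivers, is the affine statement: in the fixed affine chart containing $\gr(f^+_{\1,\gamma})$, write the projective plane of $P_{\exp(t\mathfrak a)}$ as $\{\phi=0\}$; since $\con(f^+_{\1,\gamma})$ is the convex hull of $\gr(f^+_{\1,\gamma})$ and closed half-spaces are convex, $P_{\exp(t\mathfrak a)}$ is a support plane if and only if $\phi$ has a constant sign on $\gr(f^+_{\1,\gamma})\setminus\{(x,x),(x',x')\}$. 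Each open arc is connected and disjoint from $\{\phi=0\}$, hence carries a constant sign, and that sign is determined by the local picture at a corner because the quadric $\partial_\infty\AdS^3$ is transverse to $\{\phi=0\}$ along the smooth conic $\partial_\infty P_{\exp(t\mathfrak a)}$. With this reformulation your argument is complete and correct.
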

Recall the notation from the proof of Proposition \ref{prop:past vs future}: $A_i\subset P_{\gamma_i}$ is the convex envelope of $\gr(\gamma_i^{-1}|_{I_i})$, which is a half-plane bounded by the geodesic $P_{\gamma_1}\cap P_{\gamma_2}$.

Of course we could provide an analogous statement for $\con(f^-_{\gamma_1,\gamma_2})$, but we restrict to $f^+_{\gamma_1,\gamma_2}$ for simplicity.

\begin{proof}
From Proposition \ref{prop:past vs future},  the pleated surface which is obtained as the  union of  $A_1\subset P_{\gamma_1}$ and $A_2\subset P_{\gamma_2}$ coincides with $\partial_+\con(f^+_{\gamma_1,\gamma_2})$ if $\gamma_2\circ \gamma_1^{-1}$ is a hyperbolic isometry that translates along $\ell$ to the left, and with $\partial_-\con(f^+_{\gamma_1,\gamma_2})$ if it translates to the right, by Proposition \ref{prop:past vs future}.

The first two items are then obvious, since $P_{\gamma_i}$ are smooth surfaces, hence $A_i$ is smooth at any interior point, and therefore has a unique support plane there. The last item can be proved in the same spirit as Proposition \ref{prop:past vs future}. First, we can assume $\gamma_1=\1$ and $\gamma_2=\gamma$ is a hyperbolic isometry translating on the left (resp. right) along $\ell$. By \eqref{eq:shape}, if $P_\sigma$ is a support plane at $p$, then $p$ is in the convex hull of the pairs $(y,\sigma^{-1}(y))$ where $y$ satisfies $\sigma^{-1}(y)=f^\pm_{\1,\gamma}(y)$. The only possibility is then that $p$ lies in the geodesic connecting the points $(x,x)$ and $(x',x')$ in $\RP^1\times\RP^1$, where $x$ and $x'$ are the fixed points of $\gamma$. Hence $\sigma$ must have the same fixed points of $\gamma$. That is, $\sigma$ is a hyperbolic isometry with axis $\ell$ (or the identity). Moreover, by an analogous argument as in Proposition \ref{prop:past vs future}, $P_\sigma$ is in the future (resp. past) of $\con(f^+_{\gamma_1,\gamma_2})$ if and only if $\sigma$ translates on the left (resp. right), and its translation length is less than that of $\gamma$. Hence $\sigma$ is of the form $\exp(t\mathfrak a)$ for $t\in [0,1]$.
\end{proof}

\subsection{Simple earthquake}

We can now conclude the study of orientation-preserving homeomorphisms obtained by combining two elements of $\PSL(2,\R)$. The following proposition shows that in that situation, the composition of the projections $\pi_l^\pm$ and $\pi_r^\pm$ provide the earthquake map as in Example \ref{ex:simple}. This is not interesting in its own, since we recover a simple earthquake map which we had already defined explicitly. However, the following proposition will be an important tool to complete the proof of the earthquake theorem in Section \ref{sec:proof}.

\begin{proposition}\label{prop:proj simple}
Let $\gamma_1,\gamma_2\in\PSL(2,\R)$ be such that $\gamma_2\circ\gamma_1^{-1}$ is a hyperbolic isometry, and let $\pi_l^\pm,\pi_r^\pm$ be the projections associated with the convex envelope of $f^+_{\gamma_1,\gamma_2}$. Then:
\begin{enumerate}
\item $\pi_l^\pm,\pi_r^\pm:\partial_\pm\con(f^+_{\gamma_1,\gamma_2})\to\Hyp^2$ are bijections;
\item Assume $\gamma_2\circ\gamma_1^{-1}$ translates along $\ell$ to the right (resp. left), as seen from $D_1$ to $D_2$. Then the composition $\pi_r^-\circ (\pi^-_l)^{-1}:\Hyp^2\to\Hyp^2$ (resp. $\pi_r^+\circ (\pi^+_l)^{-1}:\Hyp^2\to\Hyp^2$)  is a left (resp. right) earthquake map extending $f^+_{\gamma_1,\gamma_2}$.
\end{enumerate}
\end{proposition}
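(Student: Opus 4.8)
The plan is to leverage the explicit description of $\partial_\pm\con(f^+_{\gamma_1,\gamma_2})$ from Proposition \ref{prop:past vs future} and the classification of support planes from Corollary \ref{cor:all support}, and then to reduce to the normalized situation $\gamma_1=\1$, $\gamma_2=\gamma$ hyperbolic. By Lemma \ref{lem:left right proj}, applying the isometry $(\1,\gamma_1)$ conjugates the projections by $\gamma_1$ on the right factor, so proving the statement for $f^+_{\1,\gamma}$ and then transporting back will suffice; this is harmless since pre/post-composing with an isometry of $\Hyp^2$ sends earthquakes to earthquakes and preserves bijectivity.

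For item (1), I would treat the two relevant boundary components separately. Consider the component which is the union of the half-planes $A_1\subset P_{\gamma_1}$ and $A_2\subset P_{\gamma_2}$ meeting along the geodesic $\ell'=P_{\gamma_1}\cap P_{\gamma_2}$. On $\mathrm{int}(A_i)$ the support plane is the constant plane $P_{\gamma_i}$ (Corollary \ref{cor:all support}), so by \eqref{defi projections} the restriction of $\pi_l^\pm$ to $\mathrm{int}(A_i)$ is just $\Fix\circ R_{\gamma_i^{-1}}$, i.e. the composition of the isometric identification $\iota^{-1}=\Fix$ of $P_\1$ with $\Hyp^2$ (Example \ref{ex:traceless}) with the isometry $R_{\gamma_i^{-1}}:P_{\gamma_i}\to P_\1$; hence each $\pi_l^\pm|_{A_i}$ is an isometric embedding of the half-plane $A_i$ onto a half-plane $D_i'$ in $\Hyp^2$, and likewise for $\pi_r^\pm$. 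The point is to check that the two image half-planes $\pi_l^\pm(A_1)$ and $\pi_l^\pm(A_2)$ are the two closed half-planes bounded by a common geodesic and lying on opposite sides, so that their union is all of $\Hyp^2$ and the overlap is exactly the image of $\ell'$. This follows because, along $\ell'$, we are choosing on the overlap component the constant support plane (say $P_{\gamma_1}$, per Remark \ref{rmk:choice}), so $\pi_l^\pm$ is well-defined there and continuous across $\ell'$ from the $A_1$ side; the $A_2$ side then glues on the far side of $\pi_l^\pm(\ell')$ because the comparison between $R_{\gamma_1^{-1}}$ and $R_{\gamma_2^{-1}}$ on $\ell'$ is $R_{(\gamma_2\gamma_1^{-1})^{-1}}$, a hyperbolic isometry of $\Hyp^2$ fixing $\pi_l^\pm(\ell')$ pointwise and translating along it, which glues the two half-plane images into the whole plane without overlap beyond the common boundary geodesic. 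The same argument applies to $\pi_r^\pm$, giving bijectivity. Injectivity within each $A_i$ is clear from the isometry property; injectivity across $\ell'$ follows from the half-planes being on opposite sides.

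For item (2), once we know $\pi_l^\pm$ and $\pi_r^\pm$ are bijections that are isometries on each of $A_1,A_2$, the composition $E:=\pi_r^\pm\circ(\pi_l^\pm)^{-1}$ is a bijection of $\Hyp^2$ which restricted to $\pi_l^\pm(A_i)$ equals $\pi_r^\pm\circ(\pi_l^\pm|_{A_i})^{-1}$, i.e. the restriction of an isometry $\rho_i$ of $\Hyp^2$. Taking $\gamma_1=\1$: on $\pi_l^\pm(A_1)$ one computes $\rho_1=\id$ (since there $\pi_l^\pm=\Fix=\pi_r^\pm$ because $\gamma_1=\1$), while on $\pi_l^\pm(A_2)$ the composition works out, via the identities $\pi_l^\pm(p)=\Fix(p\gamma)$ and $\pi_r^\pm(p)=\Fix(\gamma p)$ for $p\in P_\gamma$ exactly as in \eqref{eq:compose plane}, to $\rho_2=\gamma$ (or $\gamma^{-1}$ depending on orientation). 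Thus $E$ is the map which is the identity on one half-plane and $\gamma^{\pm1}$ on the other — precisely a simple earthquake along the geodesic $\pi_l^\pm(\ell')$ as in Example \ref{ex:simple}, with comparison isometry $\gamma^{\pm1}$ a hyperbolic transformation whose axis is $\pi_l^\pm(\ell')$, which weakly separates the two strata. It then remains to verify that the translation direction is correct — left when $\gamma_2\gamma_1^{-1}$ translates right seen from $D_1$ to $D_2$, for the past component, and right/future in the other case. I expect this orientation bookkeeping to be the main obstacle: one must carefully track how the left/right multiplications $R_{\gamma^{-1}}$, $L_{\gamma^{-1}}$ interact with the orientation conventions fixed in \eqref{eq:orientation}, how the future vs past choice in Proposition \ref{prop:past vs future} flips the role of $A_1,A_2$ relative to the intervals $I_1,I_2$, and why the composition $\pi_r\circ\pi_l^{-1}$ (right projection after inverse of left projection) produces a \emph{left} earthquake precisely when the generating hyperbolic isometry moves to the right — matching the counterintuitive naming convention flagged before Lemma \ref{lem:left right proj}. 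Finally, by Lemma \ref{lem:left right proj} the general $(\gamma_1,\gamma_2)$ case is recovered by conjugating $E$ by $\gamma_1$, which does not change the earthquake type, and checking directly that the extension of $E$ to $\partial_\infty\Hyp^2$ agrees with $f^+_{\gamma_1,\gamma_2}$ using the boundary extension \eqref{eq:ext bdy plane} on each $A_i$.
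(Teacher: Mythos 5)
Your outline follows the same route as the paper (isometric projections on each face $A_i$ via Example \ref{ex: extension isometry}, gluing along the bending geodesic, reduction of the comparison isometries to \eqref{eq:compose plane}), but the two steps that carry the actual content are not established. In item (1), the key claim is that $\pi_l^\pm(A_1)$ and $\pi_l^\pm(A_2)$ are the two \emph{complementary} half-planes bounded by one and the same geodesic, meeting only along $\pi_l^\pm(\ell')$. Your justification --- that the two restrictions differ on $\ell'$ by right multiplication by $(\gamma_2\gamma_1^{-1})^{-1}$, ``a hyperbolic isometry of $\Hyp^2$ fixing $\pi_l^\pm(\ell')$ pointwise and translating along it'' --- does not work: right multiplication is an isometry of $\AdS^3$, not of $\Hyp^2$ under $\Fix$, no isometry fixes a geodesic pointwise while translating along it, and in any case knowing the transition map along $\ell'$ does not rule out that the two image half-planes lie on the \emph{same} side of the common boundary geodesic, which would destroy both injectivity and surjectivity. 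The paper pins the images down through the boundary extension \eqref{eq:ext bdy plane}: $\pi_l^\pm$ sends $\gr(\gamma_i^{-1}|_{I_i})$ to $I_i$ and the ideal endpoints $(x,y),(x',y')$ of $\ell'$ to $x,x'$, so $\pi_l^\pm(\mathrm{int}(A_i))=\mathrm{int}(D_i)$ and $\pi_l^\pm(\ell')=\ell$, and this works for \emph{every} admissible constant choice of support plane $P_{\exp(t\mathfrak a)\gamma_1}$, $t\in[0,1]$, along $\ell'$ (Corollary \ref{cor:all support}). Your restriction to the choice $P_{\gamma_1}$ is a real loss: the proposition is applied later for an arbitrary constant choice (see the remark following its statement), so the case of an intermediate plane must be covered.

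In item (2) the heart of the statement --- that the comparison isometries translate in the correct direction, with the left/right earthquake matching the past/future component --- is exactly the step you defer as ``the main obstacle'' and never carry out, so the proposed argument is incomplete where it matters most. No delicate orientation bookkeeping in $\AdS^3$ is actually needed: the past/future dichotomy is already encoded in Proposition \ref{prop:past vs future} (it tells you which boundary component $A_1\cup A_2$ is), and once $E=\gamma_i^{-1}$ on $\mathrm{int}(D_i)$ by \eqref{eq:compose plane}, one has $\mathrm{Comp}(\mathrm{int}(D_1),\mathrm{int}(D_2))=\gamma_1\circ\gamma_2^{-1}$, which is the inverse of $\gamma_2\circ\gamma_1^{-1}$ and hence translates left seen from $D_1$ to $D_2$ precisely when the latter translates right. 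You also omit the strata comparisons involving the leaf $\ell$ itself: $E|_\ell$ depends on the chosen support plane along $\ell'$, and one must check, via the third item of Corollary \ref{cor:all support}, that $\mathrm{Comp}(\mathrm{int}(D_1),\ell)$ and $\mathrm{Comp}(\ell,\mathrm{int}(D_2))$ are hyperbolic with axis $\ell$ translating the right way for $t\in(0,1)$, and that in the extreme cases $t\in\{0,1\}$ one of them is the identity, which Definition \ref{defi:eart} permits only because $\ell$ lies in the closure of the gaps. Finally, a small slip in the reduction: under the isometry $(\1,\gamma_1)$ only the right projection changes (Lemma \ref{lem:left right proj}), so the general map is $\gamma_1^{-1}\circ\hat E$, a post-composition rather than a conjugation; this is harmless, since it leaves the comparison isometries unchanged and turns the boundary extension $f^+_{\1,\gamma_2\gamma_1^{-1}}$ into $f^+_{\gamma_1,\gamma_2}$, but it should be stated correctly.
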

Again, we considered the case of $f^+_{\gamma_1,\gamma_2}$ for the sake of simplicity, but one could give an analogous statement for $f^-_{\gamma_1,\gamma_2}$. Moreover, we remark that Proposition \ref{prop:proj simple} holds for \emph{any choice} of support planes that is needed to define the projections.
\begin{proof}
For the first point, recall that $A_i\subset P_{\gamma_i}$, and that the union $A_1\cup A_2$ is the past (resp. future) boundary component of $\con(f^+_{\gamma_1,\gamma_2})$ if $\gamma_2\circ\gamma_1^{-1}$ translates along $\ell$ to the right (resp. left).

Hence $(\pi_l^\pm)_{\mathrm{int}(A_i)}$ and $(\pi_r^\pm)|_{\mathrm{int}(A_i)}$ are the restrictions of the projections associated with the totally geodesic plane $P_{\gamma_i}$, which are described in Example \ref{ex: extension isometry}. In particular, $(\pi_l^\pm)_{\mathrm{int}(A_i)}$ and $(\pi_r^\pm)|_{\mathrm{int}(A_i)}$ are the restrictions to $\mathrm{int}(A_i)$ of global isometries of $\AdS^3$ (defined by multiplication on the left or on the right by $\gamma_i^{-1}$) sending $P_{\gamma_i}$ to $P_\1$, post-composed with the usual isometry $\Fix:P_\1\to\Hyp^2$. As a consequence, $(\pi_l^\pm)_{\mathrm{int}(A_i)}$ and $(\pi_r^\pm)|_{\mathrm{int}(A_i)}$ map geodesics of $P_{\gamma_i}$ to geodesics of $\Hyp^2$. Moreover, by Equation \eqref{eq:ext bdy plane}, $\pi_l^\pm$ maps $\mathrm{int}(\partial_\infty (A_i))=\gr(\gamma_i^{-1}|_{\mathrm{int}(I_i)})$ to $\mathrm{int}(I_i)$. Hence $\pi_l^\pm(\mathrm{int}(A_i))=\mathrm{int}(D_i)$. Analogously, $\pi_r^\pm(\mathrm{int}(A_i))=\gamma_1^{-1}(\mathrm{int}(D_i))=\gamma_2^{-1}(\mathrm{int}(D_i))$.

To see that $\pi_l^\pm$ and $\pi_r^\pm$ are bijective, it remains to show that the image of the geodesic $A_1\cap A_2=P_{\gamma_1}\cap P_{\gamma_2}$ via $\pi_l^\pm$ is the geodesic $\ell=D_1\cap D_2$, while the image via $\pi_r^\pm$ is the geodesic $\gamma_1^{-1}(\ell)=\gamma_2^{-1}(\ell)$. The definition of $\pi_l^\pm$ and $\pi_r^\pm$ on $A_1\cap A_2$ actually depends on the choice of a support plane. Recall that we must choose the \emph{same} support plane at any point $p\in A_1\cap A_2$. From Corollary \ref{cor:all support}, the possible choices of support planes at $p$ are of the form $P_{\sigma\gamma_1}$, where $\sigma$ has the same fixed points as $\gamma_2\circ\gamma_1^{-1}$, which are precisely the common endpoints of $I_1$ and $I_2$.

Using the notation from Lemma \ref{lemma intersection two planes}, we thus see that the endpoints at infinity of $A_1\cap A_2$ are the points $(x,y)$ and $(x',y')$ where $x,x'$ are the fixed points of $\gamma_2\circ\gamma_1^{-1}$ (and of $\sigma$). Hence from Equation \eqref{eq:ext bdy plane} we have (for any choice of $\sigma$ as in the third item of Corollary \ref{cor:all support}) $\pi_l^{\pm}(x,y)=x$ and $\pi_l^{\pm}(x',y')=x'$. Since $\pi_l^\pm$ is, as before, the restriction of an  isometry between $P_{\sigma\gamma_1}$ and $\Hyp^2$, it maps geodesics to geodesics, hence $\pi_l^\pm(A_1\cap A_2)=\ell$. Analogously, $\pi_r^{\pm}(x,y)=y$ and $\pi_r^{\pm}(x',y')=y'$, from which it follows that $\pi_l^\pm(A_1\cap A_2)=\gamma_1^{-1}(\ell)=\gamma_2^{-1}(\ell)$.
This concludes the proof of the first item. 

For the second item, define $E:=\pi_r^-\circ (\pi^-_l)^{-1}$, which is a bijection of $\Hyp^2$. Consider the geodesic lamination of $\Hyp^2$ which is composed by the sole geodesic $\ell$. Hence the strata of $\ell$ are: $\mathrm{int}(D_1),\mathrm{int}(D_2)$ and $\ell$. We will show that the comparison isometries $\mathrm{Comp}(S,S'):=(E|_{S})^{-1}\circ E|_{S'}$ all translate to the right or to the left seen from one stratum to another, according to as $\gamma_2\circ\gamma_1^{-1}$ translates to the left or to the right seen from $D_1$ to $D_2$. 

Let us first consider $S=\mathrm{int}(D_1)$ and $S'=\mathrm{int}(D_2)$. Then by Example \ref{ex: extension isometry} (see in particular Equation \eqref{eq:compose plane}) $E$ equals $\gamma_i^{-1}$ on $\mathrm{int}(D_i)$, because $(\pi_l^\pm)^{-1}(\mathrm{int}(D_i))=\mathrm{int}(A_i)\subset P_{\gamma_i^{-1}}$. Hence the comparison isometry 
$\mathrm{Comp}(\mathrm{int}(D_1),\mathrm{int}(D_2))$ equals $\gamma_1\circ\gamma_2^{-1}$, and it translates to the left (resp. right) seen from $\mathrm{int}(D_1)$ to $\mathrm{int}(D_2)$ exactly when $\gamma_2\circ\gamma_1^{-1}$, which is its inverse, translates to the right (resp. left). The proof when one of the two strata $S$ or $S'$ is $\ell$ is completely analogous, by using the third item of Corollary \ref{cor:all support}. Indeed (recalling Remark \ref{rmk:choice}), by any choice of $\sigma$ of the form $\sigma=\exp(t\mathfrak a)$ with $t\in (0,1)$, $\mathrm{Comp}(\ell,\mathrm{int}(D_2))=\sigma\circ\gamma_2^{-1}$ translates to the left (resp. right) seen from $\ell$ to $\mathrm{int}(D_2)$, and $\mathrm{Comp}(\mathrm{int}(D_1),\ell)=\gamma_1\circ\sigma^{-1}$ translates to the left (resp. right) seen from $\mathrm{int}(D_1)$ to $\ell$. If instead $\sigma=\exp(t\mathfrak a)$ with $t\in\{0,1\}$, then $\sigma$ coincides either with $\gamma_1$ or with $\gamma_2$, which means that one of the comparison isometries $\mathrm{Comp}(\mathrm{int}(D_1),\ell)$ and $\mathrm{Comp}(\ell,\mathrm{int}(D_2))$ translates to the left, and the other is the identity, which is still allowed in the definition of earthquake because $\ell$ is in the boundary of $\mathrm{int}(D_i)$.
\end{proof}

\subsection{The example is prototypical}

The case of simple earthquakes that we have considered above may appear as very special. However, it turns out that it is the prototypical example, that will serve to treat the general case in the proof of the earthquake theorem. The following lemma shows that the situation of two intersecting planes occurs quite often.

\begin{lemma}\label{lemma:intersection of two support planes}
Let $f:\mathbb{RP}^1\to \mathbb{RP}^1$ be an orientation-preserving homeomorphism which is not in $\PSL(2,\R)$. Then:
\begin{enumerate}
\item Any two   future support planes of $\con(f)$ at points of $\partial_+\con(f)$ intersect in $\AdS^3$. Analogously, any two past support planes of $\con(f)$ at points of $\partial_-\con(f)$ intersect in $\AdS^3$.
\item Given a point $p\in \partial_\pm\con(f)$, if there exist two support planes at  $p$, then their intersection (which is a spacelike geodesic) is contained in $\partial_\pm\con(f)$. As a consequence, any other support plane at $p$ contains this spacelike geodesic.
\end{enumerate}
\end{lemma}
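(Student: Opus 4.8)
The plan is to reduce both items to the fundamental example of two spacelike planes meeting along a geodesic, treated in Section \ref{sec:example}, using the convergence statement for support planes (Lemma \ref{lem: convergence support planes}) and the characterization of intersecting spacelike planes (Lemma \ref{lemma intersection two planes}).

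For item (1), let $P_{\gamma_1}$ and $P_{\gamma_2}$ be two future support planes of $\con(f)$ at points $p_1,p_2\in\partial_+\con(f)$. By Lemma \ref{lemma intersection two planes} it suffices to show that $\gamma_2\circ\gamma_1^{-1}$ is hyperbolic, i.e. that $\gr(\gamma_1^{-1})$ and $\gr(\gamma_2^{-1})$ meet in at least two points; equivalently, using Lemma \ref{lem:spacelike planes}, that $\partial_\infty P_{\gamma_1}$ and $\partial_\infty P_{\gamma_2}$ meet in at least two points of $\RP^1\times\RP^1$. The key observation is that since both planes are support planes of $\con(f)$, equation \eqref{eq:shape} gives $P_{\gamma_i}\cap\con(f)=\con(\partial_\infty P_{\gamma_i}\cap\gr(f))$, so $\partial_\infty P_{\gamma_i}\cap\gr(f)$ is a nonempty closed subset of the circle $\gr(f)$. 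The two circles $\partial_\infty P_{\gamma_1}$ and $\partial_\infty P_{\gamma_2}$ are graphs of elements of $\PSL(2,\R)$, hence each is isotopic to the diagonal in $\RP^1\times\RP^1$; two such circles either coincide or meet transversally, and a transversal (homologically essential) intersection of two such curves must consist of an even number $\geq 2$ of points (alternatively: if they met in exactly one point, one could separate $\gr(f)$ by one plane from the intersection point of the other, contradicting that both sets $\partial_\infty P_{\gamma_i}\cap\gr(f)$ are nonempty and the planes are both support planes lying on the same — future — side). More directly: consider the continuous family $\{P_{\sigma_t}\}$ of disjoint spacelike planes given by Remark \ref{rem:equivalent def support} disjointing $\con(f)$ past the future support plane $P_{\gamma_1}$; if $P_{\gamma_2}$ were disjoint from $P_{\gamma_1}$ in $\AdS^3$ it would be disjoint from all $P_{\sigma_t}$ for $t$ small too, so $\gamma_2\circ\gamma_1^{-1}$ would be elliptic or parabolic or the identity, and one checks in each case that this forces one of the two support planes to separate $\con(f)$ from the boundary point touched by the other — contradicting that both are genuine future support planes of the same convex body. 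I expect the cleanest route to be the first one: both $\partial_\infty P_{\gamma_i}\cap\gr(f)$ are nonempty, both planes bound $\con(f)$ on the same (future) side, and two ``diagonal'' circles meeting in at most one point would have to coincide, whence $\gamma_1=\gamma_2$, contrary to assumption — unless they meet in $\geq 2$ points, giving the hyperbolic isometry.

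For item (2), suppose $P_{\gamma_1}$ and $P_{\gamma_2}$ are two distinct support planes at the same point $p\in\partial_+\con(f)$. By item (1) they intersect along a spacelike geodesic $\ell_0\subset\AdS^3$, and $p\in\ell_0$. Applying \eqref{eq:shape} to each, $\partial_\infty P_{\gamma_i}\cap\gr(f)$ lies in the common part; more precisely $\ell_0=P_{\gamma_1}\cap P_{\gamma_2}$ has its two endpoints in $\gr(f)$ (since both $\partial_\infty P_{\gamma_i}\cap\gr(f)$ are nonempty, and the two circles meet in exactly the two endpoints of $\ell_0$ in $\partial_\infty\AdS^3$ by the argument of item (1)). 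Call these endpoints $(x,y)$ and $(x',y')$, so $\gamma_1^{-1}(x)=\gamma_2^{-1}(x)=y$ and similarly at $x'$; thus $x,x'$ are the fixed points of the hyperbolic element $\gamma_2\circ\gamma_1^{-1}$, and we are exactly in the configuration of Lemma \ref{lemma intersection two planes} and Proposition \ref{prop:past vs future} with these $\gamma_1,\gamma_2$. Now $\con(f^+_{\gamma_1,\gamma_2})$ (with the appropriate labelling of the intervals $I_1,I_2$) has $P_{\gamma_1}$ and $P_{\gamma_2}$ as support planes, and since $\gr(f^+_{\gamma_1,\gamma_2})\subseteq\overline{P}_{\gamma_1}\cup\overline{P}_{\gamma_2}$, while $\gr(f)$ is contained in the future side of both planes, one gets the inclusion $\con(f)\subseteq\con(f^+_{\gamma_1,\gamma_2})$ on the relevant side near $\ell_0$, forcing $\con(f)$ to coincide with $\con(f^+_{\gamma_1,\gamma_2})$ along $\ell_0$; hence $\ell_0\subset\partial_+\con(f)$. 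Finally, any further support plane $P_{\gamma_3}$ at $p$ intersects both $P_{\gamma_1}$ and $P_{\gamma_2}$ along spacelike geodesics through $p$ (item (1) again), and by Corollary \ref{cor:all support} applied to $\con(f^+_{\gamma_1,\gamma_2})$ the support planes at $p$ are exactly those of the form $P_{\sigma\gamma_1}$ with $\sigma=\exp(t\mathfrak a)$, all of which contain $\ell_0$; so $P_{\gamma_3}\supseteq\ell_0$, as claimed.

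The main obstacle I anticipate is item (1): showing that two future support planes must actually meet inside $\AdS^3$ rather than only touching at infinity (or being disjoint). The honest content is a separation/parity argument for the two ``diagonal'' circles $\partial_\infty P_{\gamma_i}$ inside the torus $\RP^1\times\RP^1$ relative to the curve $\gr(f)$, combined with the fact that a support plane of $\con(f)$ cannot strictly separate a boundary point of $\con(f)$ from $\con(f)$ itself. Once item (1) is in hand, item (2) is essentially a matter of recognizing the two-plane configuration and quoting Proposition \ref{prop:past vs future} and Corollary \ref{cor:all support}.
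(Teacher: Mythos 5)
Your plan correctly identifies the reduction to the two-plane example for the second half of item (2), but both items contain genuine gaps at exactly the points where the real work lies. For item (1), the route you call cleanest rests on the claim that two distinct circles isotopic to the diagonal in $\RP^1\times\RP^1$ either coincide or meet transversally in an even number $\geq 2$ of points. This is false: $\partial_\infty P_{\gamma_1}\cap\partial_\infty P_{\gamma_2}$ is the fixed-point set of $\gamma_2\circ\gamma_1^{-1}$ on $\RP^1$, so two distinct such circles meet in $0$ points (elliptic case), tangentially in $1$ point (parabolic case), or in $2$ points (hyperbolic case); the homological intersection number of two $(1,1)$--curves on the torus is $0$, so parity alone gives nothing. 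Your fallback sketches (``one checks in each case\dots'') are not carried out, and note that any correct argument must genuinely use that both planes are \emph{future} support planes at points of $\partial_+\con(f)$: the statement fails for one future and one past support plane, or for planes touching $\con(f)$ only along $\gr(f)$ (this is the paper's remark following the lemma). The paper's proof is a separation argument inside $\AdS^3$: assuming $P,Q$ disjoint, push each slightly to the future to obtain planes $P'=P_{\sigma\gamma_1}$, $Q'=P_{\sigma\gamma_2}$ ($\sigma$ a small clockwise rotation, using Lemma \ref{lemma intersection two planes}) so that $P,Q,P',Q'$ are pairwise disjoint and $P',Q'$ miss $\partial_+\con(f)$; then $\AdS^3\setminus(P'\cup Q')$ has two connected components with $P$ and $Q$ in different ones, contradicting that $\partial_+\con(f)$ is connected, avoids $P'\cup Q'$, and meets both $P$ and $Q$.

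For item (2), the crux is showing that the two ideal endpoints $(x,y)$ and $(x',y')$ of $P_{\gamma_1}\cap P_{\gamma_2}$ lie on $\gr(f)$. You assert this parenthetically ``by the argument of item (1)'', but nonemptiness of each $\partial_\infty P_{\gamma_i}\cap\gr(f)$ does not yield it. The paper's argument is a convexity observation: since $P_{\gamma_2}$ is a support plane, $\con(f)\cap P_{\gamma_1}=\con(\partial_\infty P_{\gamma_1}\cap\gr(f))$ (by \eqref{eq:shape}) lies in the half-plane $A_1\subset P_{\gamma_1}$ bounded by $P_{\gamma_1}\cap P_{\gamma_2}$, and since this convex set contains $p$, which lies on the boundary geodesic of $A_1$, it must contain that whole geodesic, forcing both ideal endpoints into $\gr(f)$. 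Your replacement step is not sound as written: being in the past of both planes only places $\con(f)$ in the intersection of two half-spaces, not inside the compact lens $\con(f^+_{\gamma_1,\gamma_2})$, so the inclusion ``$\con(f)\subseteq\con(f^+_{\gamma_1,\gamma_2})$ near $\ell_0$'' is unsubstantiated; likewise the final appeal to Corollary \ref{cor:all support} is made for the wrong convex body, since a support plane of $\con(f)$ at $p$ is not a priori a support plane of $\con(f^+_{\gamma_1,\gamma_2})$. Once the endpoints are known to be in $\gr(f)$, the geodesic lies in $\con(f)$, hence in $\partial_\pm\con(f)$ by Corollary \ref{cor:past or future}, and the ``as a consequence'' clause follows directly by convexity: a support plane at a point in the relative interior of a segment contained in the convex body must contain the segment.
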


\begin{proof}
Let us consider future support planes, the other case being analogous. For the first item, let $P$ and $Q$ be support planes intersecting $\partial_+\con(f)$, which are spacelike by Proposition \ref{prop:support planes space or light}, and suppose by contradiction $P$ and $Q$ that they are disjoint. We can slightly move them in the future   to get spacelike planes $P'$ and $Q'$ such that $P$, $Q$, $P'$ and $Q'$ are mutually disjoint and $P'\cap \partial_+ \con(f)=Q'\cap \partial_+\con(f)=\emptyset$. (For instance, if $P=P_{\gamma_1}$ and $Q=P_{\gamma_2}$, then we can use Lemma \ref{lemma intersection two planes} and consider  $P'=P_{\sigma\gamma_1}$ and $Q'=P_{\sigma\gamma_1}$ for $\sigma$ an elliptic element of small clockwise angle of rotation.)

Now, observe that $\AdS^3\setminus (P'\cup Q')$ is the disjoint union of two cylinders and $P$ and $Q$ lie in different connected components of this complement. See Figure \ref{fig:tore}. However, $\partial_{+}\con(f)$ is connected and has empty intersection with $P$ and $Q$, leading to a contradiction.

For the second item, let $P=P_{\gamma_1}$ and $Q=P_{\gamma_2}$ be support planes such that $p\in\partial_+\con(f)\cap P\cap Q$. By   Lemma \ref{lemma intersection two planes}, $\gamma_2\circ\gamma_1^{-1}$ is hyperbolic. Up to switching the roles of $\gamma_1$ and $\gamma_2$, we can assume that $\gamma_2\circ\gamma_1^{-1}$ translates to the left seen from $D_1$ to $D_2$, where as usual $D_i$ is the convex envelope of the interval $I_i$, and the common endpoints $x,x'$ of $I_1$ and $I_2$ are the fixed points of $\gamma_2\circ\gamma_1^{-1}$. Hence $\partial_\infty P_{\gamma_1}\cap\partial_\infty P_{\gamma_2}=\{(x,y),(x',y')\}$ where $y=\gamma_1^{-1}(x)=\gamma_2^{-1}(x)$ and $y'=\gamma_1^{-1}(x')=\gamma_2^{-1}(x')$.

Now, by \eqref{eq:shape}, $P_{\gamma_i}\cap\gr(f)$ consists of at least two points for $i=1,2$. We claim that $\gr(f)\cap P_{\gamma_i}$ contains at least $(x,y)$ and $(x',y')$. Indeed, since $P_{\gamma_2}$ is a support plane, $\con(f)\cap P_{\gamma_1}$ is contained in the half-plane $A_1\subset P_{\gamma_1}$. If $\gr(f)\cap P_{\gamma_1}$ had not contained $(x,y)$ and $(x',y')$, then $\con(f)\cap P_{\gamma_1}$ would not contain the boundary geodesic $A_1\cap A_2$, and thus would not contain $p$. The same argument applies for $P_{\gamma_2}$. This shows that both $(x,y)$ and $(x',y')$ are in $\gr(f)$, and therefore the spacelike geodesic $P_{\gamma_1}\cap P_{\gamma_2}$ is in $\partial_\pm\con(f)$.
\end{proof}

\begin{figure}[htb]
\centering
\includegraphics[height=6cm]{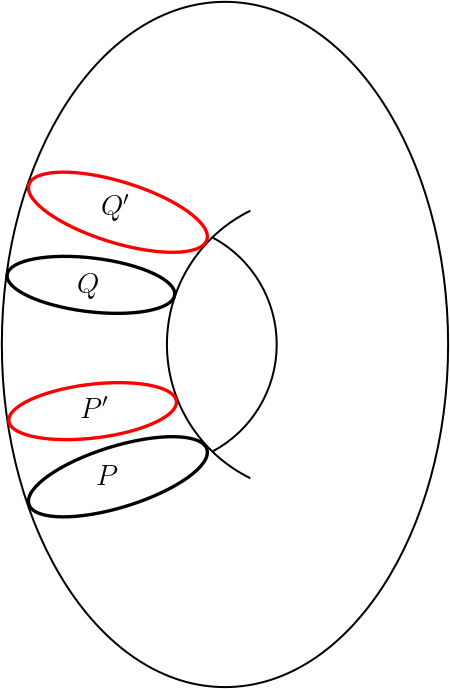}
\caption{The setting of the proof of Lemma \ref{lemma:intersection of two support planes}.} \label{fig:tore}      
\end{figure}

\begin{remark}
In the first item of Lemma \ref{lemma:intersection of two support planes}, the hypothesis that $P$ and $Q$ are support planes at points of $\partial^\pm\con(f)$ (hence not at points of $\gr(f)\subset\partial_\infty\AdS^3$) is necessary. Recall that by 
Proposition \ref{prop:support planes space or light} support planes of $\con(f)$ are either spacelike or lightlike, and they are necessarily spacelike if they intersect $\con(f)$ at points of $\partial_\pm\con(f)$. 

Now, if one of the two planes $P$ or $Q$ is a support plane at a point of $\gr(f)$, then the proof only shows that $P$ and $Q$ must intersect in $\overline\AdS{}^3$, but not necessarily in the interior. It can perfectly happen that two future (or two past) support planes (one of which possibly lightlike) at a point $(x,f(x))$ of $\gr(f)$ intersect at $(x,f(x))$ but not in the interior of $\AdS^3$.
\end{remark}

Lemma \ref{lemma:intersection of two support planes} has an important consequence. Recall that the definition of the projections $\pi_l^\pm,\pi_r^\pm:\partial_\pm\con(f)\to\Hyp^2$ depends on the choice of a support plane at all points $p$ that admit more than one support plane. Moreover, we require that this support plane is chosen to be constant on any connected component of the subset of $\partial_\pm\con(f)$ consisting of those points that admit more than one support plane (Remark \ref{rmk:choice}). We will now see that, roughly speaking, their image does \emph{not} depend on this choice of support plane.

\begin{corollary}\label{cor: image does not depend}
Let $f:\mathbb{RP}^1\to \mathbb{RP}^1$ be an orientation-preserving homeomorphism which is not in $\PSL(2,\R)$, and suppose $p\in\partial_\pm\con(f)$ has at least two support planes. Then there exist $\gamma_1,\gamma_2\in\PSL(2,\R)$ with $\gamma_2\circ\gamma_1^{-1}=\exp(\mathfrak a)$ a hyperbolic element, such that all support planes at $p$  are precisely those of the form $P_{\sigma\gamma_1}$ where $\sigma=\exp(t\mathfrak a)$ for $t\in [0,1]$. The same conclusion holds for all other points $p'\in P_{\gamma_1}\cap P_{\gamma_2}$.

In particular, the image of the spacelike geodesic $P_{\gamma_1}\cap P_{\gamma_2}$ under the projections $\pi_l^\pm,\pi_r^\pm:\partial_\pm\con(f)\to\Hyp^2$ is a geodesic in $\Hyp^2$ that does not depend on the choice of the support plane as in the definition of $\pi_l^\pm$ and $\pi_r^\pm$.
\end{corollary}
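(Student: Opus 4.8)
The plan is to identify the set of all support planes of $\con(f)$ at $p$ with a compact sub-arc of the pencil of spacelike planes through a fixed spacelike geodesic, and then to deduce the statement on $\pi_l^\pm,\pi_r^\pm$ from the description of the projections of a single spacelike plane given in Example \ref{ex: extension isometry}.

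First I would fix two distinct support planes $P_{\mu_1},P_{\mu_2}$ at $p$. Since $p\in\partial_\pm\con(f)\subseteq\AdS^3$ by Proposition \ref{prop:intersection bdy}, these are spacelike (Proposition \ref{prop:support planes space or light}), hence future (resp.\ past) support planes of $\con(f)$ by Corollary \ref{cor:past or future}, so Lemma \ref{lemma:intersection of two support planes}(1) together with Lemma \ref{lemma intersection two planes} gives that $\ell':=P_{\mu_1}\cap P_{\mu_2}$ is a spacelike geodesic and $\mu_2\circ\mu_1^{-1}$ is hyperbolic. Writing $\partial_\infty\ell'=\{(a,b),(a',b')\}$, one checks $b=\mu_1^{-1}(a)=\mu_2^{-1}(a)$ and analogously for $a',b'$, so $\{a,a'\}$ is the fixed-point set of $\mu_2\circ\mu_1^{-1}$. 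As the stabilizer in $\PSL(2,\R)$ of $a$ and $a'$ is a one-parameter subgroup $\{\exp(t\mathfrak a_0):t\in\R\}$ whose non-trivial elements are hyperbolic with axis the geodesic with endpoints $a,a'$, a spacelike plane $P_\tau$ contains $\ell'$ if and only if $\tau=\exp(t\mathfrak a_0)\mu_1$ for a unique $t\in\R$; these planes form an open arc of the pencil of all totally geodesic planes through $\ell'$. By Lemma \ref{lemma:intersection of two support planes}(2), $\ell'\subseteq\partial_\pm\con(f)$ and every support plane at $p$ contains $\ell'$, so the set of such support planes equals $\{P_{\exp(t\mathfrak a_0)\mu_1}:t\in T\}$ for some $T\subseteq\R$ with at least two elements.

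The crucial step is to show $T$ is a compact interval. It is closed by the closed-conditions argument of Lemma \ref{lem: convergence support planes} (a limit of support planes at $p$ is a support plane at $p$, all these planes passing through $\ell'\ni p$). It is bounded: the set of support planes at $p$ is closed in the compact space of planes through $p$, hence compact, and is contained in the open spacelike arc above by Proposition \ref{prop:support planes space or light} (a sequence $t_n\to\pm\infty$ would produce a lightlike support plane at $p\in\AdS^3$, contradicting that Proposition). It is an interval: if $P_{\exp(t_1\mathfrak a_0)\mu_1}$ and $P_{\exp(t_2\mathfrak a_0)\mu_1}$ with $t_1<t_2$ are support planes at $p$, then $\con(f)$ lies in the intersection of two of the closed half-spaces they bound, and the same analysis of the pencil of spacelike planes through $\ell'$ as in the proof of Corollary \ref{cor:all support} (with $\exp(t_1\mathfrak a_0)\mu_1,\exp(t_2\mathfrak a_0)\mu_1$ playing the roles of $\gamma_1,\gamma_2$) shows $\con(f)$ lies in the corresponding half-space of $P_{\exp(t\mathfrak a_0)\mu_1}$ for all $t\in[t_1,t_2]$; since $p\in\con(f)\cap P_{\exp(t\mathfrak a_0)\mu_1}$, each is a support plane at $p$. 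This last point is where I expect the genuine work to be. Writing $T=[t_-,t_+]$ with $t_-<t_+$, I set $\gamma_1:=\exp(t_-\mathfrak a_0)\mu_1$, $\mathfrak a:=(t_+-t_-)\mathfrak a_0$, $\gamma_2:=\exp(\mathfrak a)\gamma_1$; then $\gamma_2\circ\gamma_1^{-1}=\exp(\mathfrak a)$ is hyperbolic, $P_{\gamma_1}\cap P_{\gamma_2}=\ell'$, and the support planes at $p$ are precisely the $P_{\sigma\gamma_1}$ with $\sigma=\exp(t\mathfrak a)$, $t\in[0,1]$. This proves the first assertion.

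For the rest: any support plane $P$ at $p$ is a support plane of $\con(f)$ (a global condition) containing $\ell'\subseteq\con(f)$, so it meets $\con(f)$ at every $p'\in\ell'$ and is therefore a support plane at $p'$; by symmetry the support planes at every $p'\in\ell'=P_{\gamma_1}\cap P_{\gamma_2}$ are exactly those at $p$, which is the second assertion. In particular each point of $\ell'$ has infinitely many support planes, so $\ell'$ is contained in one connected component of the set of points of $\partial_\pm\con(f)$ admitting more than one support plane; by Remark \ref{rmk:choice} the support plane used to define $\pi_l^\pm,\pi_r^\pm$ is a single $P_{\sigma_0\gamma_1}$ all along $\ell'$, with $\sigma_0=\exp(t_0\mathfrak a)$, $t_0\in[0,1]$. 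By \eqref{defi projections}, on $\ell'$ the maps $\pi_l^\pm,\pi_r^\pm$ coincide with the projections of the plane $P_{\sigma_0\gamma_1}$ from Example \ref{ex: extension isometry}: restrictions of isometries $P_{\sigma_0\gamma_1}\to\Hyp^2$, hence sending the geodesic $\ell'$ to a geodesic of $\Hyp^2$, and by \eqref{eq:ext bdy plane} extending at the endpoints of $\ell'$ via $\pi_l^\pm(a,b)=a$, $\pi_l^\pm(a',b')=a'$, $\pi_r^\pm(a,b)=b$, $\pi_r^\pm(a',b')=b'$ --- here one uses that $\sigma_0$ fixes $a,a'$, so $(\sigma_0\gamma_1)^{-1}(a)=\gamma_1^{-1}(a)=b$, making these values independent of $t_0$. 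Hence $\pi_l^\pm(\ell')$ is the geodesic with endpoints $a,a'$ and $\pi_r^\pm(\ell')$ the geodesic with endpoints $b,b'$, neither depending on the chosen support plane, as claimed.
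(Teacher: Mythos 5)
Your proposal is correct and follows essentially the same route as the paper: fix two support planes at $p$, use Lemma \ref{lemma intersection two planes} and Lemma \ref{lemma:intersection of two support planes} to place all support planes in the pencil of spacelike planes through the geodesic $\ell'$, show the parameter set is a compact interval via the convergence of support planes (Lemma \ref{lem: convergence support planes}) and the two-plane wedge analysis of Corollary \ref{cor:all support}, and then normalize. Your explicit treatment of the support planes at the other points $p'\in P_{\gamma_1}\cap P_{\gamma_2}$ and of the choice-independence of $\pi_l^\pm(\ell'),\pi_r^\pm(\ell')$ via Remark \ref{rmk:choice}, Example \ref{ex: extension isometry} and Equation \eqref{eq:ext bdy plane} matches what the paper leaves implicit (and carries out in the proof of Proposition \ref{prop:proj simple}).
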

\begin{proof}
Suppose $P_{\hat\gamma_1}$ and $P_{\hat\gamma_2}$ are (say, future) distinct support planes at $p$. Write $\hat\gamma_2\circ\hat\gamma_1^{-1}=\exp(\hat{\mathfrak a})$, which is a hyperbolic element by Lemma \ref{lemma intersection two planes} and the first item of Lemma \ref{lemma:intersection of two support planes}. By the second item of Lemma \ref{lemma:intersection of two support planes}, any other support plane at $p$ must be of the form $P_{\sigma\hat\gamma_1}$ for $\sigma$ an element having the same fixed points as $\hat\gamma_2\circ\hat\gamma_1^{-1}$. That is, $\sigma$ is of the form $\exp(s\hat{\mathfrak a})$ for some $s\in\R$.

We claim that the set 
$$I=\{s\in\R\,|\,\exp(s\hat{\mathfrak a})\text{ is a support plane of }\con(f)\text{ at }p\}$$
is a compact interval. This will conclude the proof, up to applying an affine change of variable mapping the interval $I=[s_1,s_2]$ to $[0,1]$, and defining $\gamma_i=\exp(s_i\hat{\mathfrak a})$.

To prove the claim, suppose that $s,s'\in I$. Then $\con(f)$ is contained in the past of a pleated surface obtained as the union of two half-spaces, one contained in $P_{\exp(s\hat{\mathfrak a})\hat\gamma_1}$ and the other in $P_{\exp(s'\hat{\mathfrak a})\hat\gamma_1}$, meeting along the spacelike geodesic $P_{\hat\gamma_1}\cap P_{\hat\gamma_2}$. Then every support plane for this pleated surface is a support plane for $\con(f)$ as well. That is, by the last item of Corollary \ref{cor:all support}, $[s,s']\subset I$. This shows that $I$ is an interval. It is compact by Lemma \ref{lem: convergence support planes}, applied to the constant sequence $p_n=p$ and to $\gamma_n=\exp(s_n\hat{\mathfrak a})\hat\gamma_1$, showing that $s_n$ must be converging (up to subsequences) and its limit is in $I$. This concludes the proof.
\end{proof}


\section{Proof of the earthquake theorem}\label{sec:proof}

We are now ready to enter into the details of the proof of the earthquake theorem. The outline of the proof is now clear: given an orientation-preserving homeomorphism $f:\mathbb{RP}^1\to \mathbb{RP}^1$ (which we can assume is not in $\PSL(2,\R)$), we consider the projections $\pi_l^\pm,\pi_r^\pm:\partial_\pm\con(f)\to\Hyp^2$, and we want to show that the composition $\pi_r^\pm\circ (\pi_l^\pm)^{-1}$ is well-defined and is a (left or right) earthquake map extending $f$. We will prove this in several steps: the proof of Theorem \ref{thm:main} will follow from Proposition \ref{prop:bijective}, Corollary \ref{cor:bijective+extension} and Proposition \ref{prop:earthquake} below.

\subsection{Extension to the boundary}

The first property we study is the extension of the projections $\pi_l^\pm$ and $\pi_r^\pm$ to the boundary.

\begin{proposition}\label{prop:extension}
The projections $\pi_l^\pm,\pi_r^\pm:\partial_\pm\con(f)\to\Hyp^2$ extend to $\gr(f)$. More precisely, if $p_n\in\partial_\pm\con(f)\to(x,y)\in\gr(f)$, then $\pi_l^\pm(p_n)\to x$ and $\pi_r^\pm(p_n)\to y$. 
\end{proposition}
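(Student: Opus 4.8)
The plan is to use the characterization \eqref{eq:convergence} of convergence in $\overline\AdS{}^3$ together with the definition of the projections via $\Fix$. Fix a sequence $p_n \in \partial_\pm\con(f)$ converging to $(x,y) \in \gr(f) \subset \partial_\infty\AdS^3$, and let $P_{\gamma_n}$ be the (chosen) spacelike support plane of $\con(f)$ at $p_n$, so that, recalling \eqref{defi projections}, $\pi_l^\pm(p_n) = \Fix(R_{\gamma_n^{-1}}(p_n)) = \Fix(p_n\circ\gamma_n^{-1})$ and $\pi_r^\pm(p_n) = \Fix(L_{\gamma_n^{-1}}(p_n)) = \Fix(\gamma_n^{-1}\circ p_n)$. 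First I would argue, up to passing to a subsequence, that $\gamma_n \to \gamma$ for some $\gamma$ in the closure of $\PSL(2,\R)$ inside $\pmat$: indeed by Lemma \ref{lem: convergence support planes} applied with limit point $p = (x,y) \in \partial_\infty\AdS^3$, either $P_{\gamma_n}$ converges to a spacelike plane $P_\gamma$ with $\gamma_n\to\gamma\in\PSL(2,\R)$, or $P_{\gamma_n}$ converges to a lightlike plane $P_{[A]}$ with $A$ the rank-one matrix representing $(x,y)$, i.e. $\mathrm{Im}(A) = x$ and $\Ker(A) = y$ (under the identification $\RP^1 = \partial_\infty\Hyp^2$).

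In the first case the support plane $P_\gamma$ contains the limit point $(x,y)$ of $p_n$ in its boundary, so by Lemma \ref{lem:spacelike planes} we have $(x,y) \in \partial_\infty P_\gamma = \gr(\gamma^{-1})$, that is $y = \gamma^{-1}(x)$. Then, using that $p_n \to (x,y)$ means (via \eqref{eq:convergence}) that $p_n(z) \to x$ and $p_n^{-1}(z) = p_n(z) \to x$ for every $z \in \Hyp^2$ — here $p_n$ is an order-two elliptic, hence an involution, only after right multiplication; more precisely $p_n \to \delta^{-1}(x,y)$ with $p_n \in \PSL(2,\R)$ gives $p_n(z)\to x$ and $p_n^{-1}(z)\to y$. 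I would then compute the limit of $\pi_l^\pm(p_n) = \Fix(p_n\gamma_n^{-1})$: the element $p_n\gamma_n^{-1} \to \delta^{-1}(x,y)\cdot(\gamma,\1)^{-1}$, and using equivariance of $\delta$ as in \eqref{eq:bdy} together with $\mathrm{Im}(p_n\gamma_n^{-1}) \to \mathrm{Im}(\delta^{-1}(x,y)) = x$ and continuity of $\Fix$ extended to the boundary as in Example \ref{ex: extension isometry}, Equation \eqref{eq:ext bdy plane}, one gets $\pi_l^\pm(p_n) \to x$. Symmetrically, $\gamma_n^{-1}p_n$ has kernel converging to $y$, giving $\pi_r^\pm(p_n) \to y$. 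The cleanest way to organize this is via Lemma \ref{lem:left right proj}: apply the isometry $(\gamma_n,\1)$ sending $P_{\gamma_n}$ to $P_\1$, reducing to the statement that $\Fix: P_\1 \to \Hyp^2$ extends by $(x,x)\mapsto x$, which was already established in Example \ref{ex:plane} using \eqref{eq:convergence}; then track how $(x,y)$ transforms under $(\gamma_n,\1)$ and $(\1,\gamma_n^{-1})$.

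The main obstacle is the lightlike case: here $\gamma_n$ does not converge in $\PSL(2,\R)$, so I cannot directly invoke the expressions above with a limiting isometry. The idea is that $P_{[A]}$ is a lightlike support plane touching $\con(f)$ only at the single point $(x,y)$ (as in the proof of Proposition \ref{prop:support planes space or light}), and $\mathrm{Im}(A) = x$, $\Ker(A) = y$. I would show that although $\gamma_n \to [A]$ only projectively, the matrices $p_n\gamma_n^{-1}$ (suitably normalized to have unit determinant) still have image converging to $x$: the degeneration of $P_{\gamma_n}$ to the lightlike plane forces $\gamma_n^{-1}$, viewed as acting on $\RP^1$, to collapse everything towards the point $y$ except near $x$ — more precisely $\gamma_n^{-1}$ converges locally uniformly away from $x$ to the constant $y$, and $\gamma_n$ converges locally uniformly away from $y$ to the constant $x$. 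Combined with $p_n(z)\to x$ and $p_n^{-1}(z)\to y$, a case analysis on whether $p_n^{-1}(z)$ stays away from the relevant degeneration point shows $\Fix(p_n\gamma_n^{-1}) = p_n\gamma_n^{-1}(z')\to x$ for a suitable basepoint (using that $\Fix(\rho)$ is close to $\rho(z')$ when $\rho$ is elliptic of order two and $z'$ is close to $\Fix(\rho)$, or simply that $\Fix(\rho)$ lies on the geodesic between $z'$ and $\rho(z')$ which both converge to $x$), and similarly $\Fix(\gamma_n^{-1}p_n)\to y$. I expect this degenerate case to require the most care, essentially an explicit estimate on sequences of Möbius transformations whose axes or fixed data escape to the boundary; the non-degenerate case is essentially formal given Lemma \ref{lem: convergence support planes} and Lemma \ref{lem:left right proj}.
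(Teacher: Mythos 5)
Your overall architecture is the same as the paper's: split along the dichotomy of Lemma \ref{lem: convergence support planes} (support planes subconverging to a spacelike plane $P_\gamma$ with $\gamma_n\to\gamma\in\PSL(2,\R)$, versus degenerating to the lightlike plane dual to $(x,y)$), and in the first case your reduction to the continuous extension of $\Fix:P_\1\to\overline\Hyp{}^2$ is exactly what the paper does (it handles the varying $\gamma_n$ by noting $p_n\gamma_n^{-1}(i)$ stays at bounded distance from $p_n\gamma^{-1}(i)$). Where you genuinely diverge is the lightlike case. The paper argues by contradiction: it writes $p_n=\mathcal R_{\pi_l^\pm(p_n)}\circ\gamma_n$, extracts a limit $\hat x_\pm$ of $\pi_l^\pm(p_n)$, and rules out $\hat x_\pm\neq x$ using two auxiliary facts proved in the Appendix (uniform convergence of $\mathcal R_{w_n}\to\mathcal R_w$ when $w_n\to w\in\Hyp^2$, and North--South-type dynamics of $\mathcal R_{w_n}$ when $w_n\to w\in\partial_\infty\Hyp^2$). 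You instead propose a direct computation of the limit. As literally written, the first mechanism you invoke would stall: knowing that $\gamma_n^{-1}\to y$ locally uniformly away from $x$ and that $p_n\to x$ locally uniformly away from $y$ does \emph{not} determine $\lim p_n\gamma_n^{-1}(z')$ for a fixed $z'$, precisely because $\gamma_n^{-1}(z')\to y$ lands at the degeneration point of $p_n$; the product of two sequences degenerating to rank-one projective classes is indeterminate without further input. What rescues your plan is the parenthetical remark you make at the end: $\Fix(\rho)$ is the midpoint of the geodesic segment $[z',\rho(z')]$ for \emph{any} $z'$, so taking the $n$-dependent basepoint $z'_n=\gamma_n(z_0)$ gives $\Fix(p_n\gamma_n^{-1})=$ midpoint of $[\gamma_n(z_0),\,p_n(z_0)]$, and both endpoints tend to $x$ (the first by the lightlike degeneration, the second by $p_n\to(x,y)$); one then concludes using that boundary points of $\overline\Hyp{}^2$ admit a basis of geodesically convex neighbourhoods, so a segment with both endpoints converging to $x$ converges to $x$. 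The symmetric choice $z'_n=p_n^{-1}(z_0)$ handles $\pi_r^\pm$. With that completion your route is correct, is arguably more economical than the paper's (it needs neither appendix lemma nor a proof by contradiction, and in fact also covers the spacelike case, where one endpoint stays bounded and the other tends to $x$), but you should make the choice of moving basepoint and the convexity fact explicit rather than leaving them as ``a case analysis'' and ``an explicit estimate.''
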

Observe that the conclusion of Proposition \ref{prop:extension} holds for \emph{any} choice of the projections $\pi_l^\pm$ and $\pi_r^\pm$, regardless of the chosen support planes when several choices are possible, as in Remark \ref{rmk:choice}. The proof involves two well-known properties of isometries in plane hyperbolic geometry; for the sake of completeness, we provide elementary, self-contained proofs in the Appendix.

\begin{proof}
Let $p_n\in\partial_\pm\con(f)$ be a sequence converging to $(x,y)\in\gr(f)$, and let $P_{\gamma_n}$ be a sequence of support planes of $\con(f)$ at $p_n$, which are necessarily spacelike by Proposition \ref{prop:support planes space or light}. By Lemma \ref{lem: convergence support planes}, up to extracting a subsequence, there are two possibilities: either $\gamma_n\to\gamma$ and $P_{\gamma_n}$ converges to the spacelike support plane $P_\gamma$, or $\gamma_n$ diverges in $\PSL(2,\R)$ and $P_{\gamma_n}$ converges to the lightlike plane whose boundary is $(\{x\}\times\RP^1)\cup(\RP^1\cup\{y\})$. We will treat these two situations separately, and we will always use the characterization of the convergence to the boundary given in \eqref{eq:convergence}.

Consider the former case, namely when $\gamma_n\to\gamma$. We have by hypothesis that 
\begin{equation}\label{eq:hypothesis convergence}
p_n(z_0)\to x\qquad\text{and}\qquad p_n^{-1}(z_0)\to y~,
\end{equation}
for any point $z_0\in\Hyp^2$. Observe moreover that, from the definition of the projections,
\begin{equation}\label{eq:defi proj2}
\pi_l^\pm(p_n)=\Fix(p_n\gamma_n^{-1})\qquad\text{and}\qquad\pi_r^\pm(p_n)=\Fix(\gamma_n^{-1}p_n)~.
\end{equation}
 Recalling (see \eqref{eq:graph id}) that the boundary of $P_\1$ is identified with $\RP^1$ via the map $(x,x)\mapsto x$, we thus have to show (choosing for instance the point $z_0=i$) that:  $p_n\gamma_n^{-1}(i)\to x$ and $\gamma_n^{-1} p_n(i)\to y$. However, since $\gamma_n\to\gamma$, $p_n\gamma_n^{-1}(i)$ is at bounded distance from $p_n\gamma^{-1}(i)$. Applying the hypothesis \eqref{eq:hypothesis convergence} to $z_0=\gamma^{-1}(i)$, we have $p_n\gamma^{-1}(i)\to x$ and therefore $p_n\gamma_n^{-1}(i)\to x$. 
The argument is analogous to show that $\gamma_n^{-1} p_n(i)\to y$, except that it is useful to observe that $\gamma_n^{-1} p_n=p_n^{-1}\gamma_n$ since it is an order--two isometry. Now $p_n^{-1}\gamma_n(i)$ is at bounded distance from $p_n^{-1}\gamma(i)$, which converges to $y$ by hypothesis. Hence $p_n^{-1}\gamma_n(i)\to y$ and the proof is complete for this case. 

Let us move on to the latter case, that is, $\gamma_n$ diverges in $\PSL(2,\R)$. Here we must use not only the previous assumption \eqref{eq:hypothesis convergence}, but also the following:
\begin{equation}\label{eq:hypothesis convergence2}
\gamma_n(z_0)\to x\qquad\text{and}\qquad \gamma_n^{-1}(z_0)\to y~,
\end{equation}
for any $z_0\in\Hyp^2$. The condition \eqref{eq:hypothesis convergence2} holds
because $\gamma_n$ converges to the projective class of a rank one matrix $A$, such that $P_{[A]}$ is a lightlike support plane; we have already observed that the boundary at infinity of $P_{[A]}$ must be equal to $(\{x\}\times\RP^1)\cup(\RP^1\cup\{y\})$. Combining \eqref{eq:bdy}, \eqref{eq:convergence} and Lemma \ref{lem:lightlike planes}, we deduce that $\gamma_n(z_0)\to x$ and $\gamma_n^{-1}(z_0)\to y$ as claimed.

Having made this preliminary observation, now we can rewrite \eqref{eq:defi proj2} as the identities:
\begin{equation}\label{eq:defi proj3}
p_n= \mathcal R_{\pi_l^\pm(p_n)}\circ\gamma_n\qquad\text{and}\qquad p_n^{-1}= \mathcal R_{\pi_r^\pm(p_n)}\circ\gamma_n^{-1}~,
\end{equation}
 where we recall that $\mathcal R_w$ denotes the order two elliptic isometry with fixed point $w\in\Hyp^2$. Up to extracting a subsequence, we can assume that $\pi_l^\pm(p_n)\to \hat x_\pm$ and $\pi_r^\pm(p_n)\to \hat y_\pm$, for some points $\hat x_\pm,\hat y_\pm\in\Hyp^2\cup\partial_\infty\Hyp^2$. We need to show that $\hat x_\pm=x$ and $\hat y_\pm=y$. 

For this purpose, suppose by contradiction $\hat x_\pm\neq x$. Suppose first that $\hat x_\pm\in\Hyp^2$. We will use the fact (Lemma \ref{lemma:unif cont} in the Appendix) that if $w_n\to w\in \Hyp^2$, then $\mathcal R_{w_n}$ converges to $\mathcal R_w$ uniformly on $\Hyp^2\cup\partial_\infty\Hyp^2$. From \eqref{eq:defi proj3}, and the fact that, from \eqref{eq:hypothesis convergence} and \eqref{eq:hypothesis convergence2}, both $p_n(z_0)$ and $\gamma_n(z_0)$ converge to $x$, we would then have 
$$x=\lim_n p_n(z_0)=\lim_n (\mathcal R_{\pi_l^\pm(p_n)}(\gamma_n(z_0)))=\mathcal R_{\hat x_\pm}(x)\neq x$$
since $\mathcal R_{\hat x_\pm}$ does not have fixed points on $\partial_\infty\Hyp^2$, thus giving a contradiction. If $\hat y_\pm\in\Hyp^2$, we get a contradiction by an analogous argument.

Finally, if $\hat x_\pm\in \partial_\infty\Hyp^2$, we can find a neighbourhood $U$ of $\hat x_\pm$ not containing $x$, such that for $n$ large $\mathcal R_{\pi_l^\pm(p_n)}$ maps the complement of $U$ inside $U$ (see Lemma \ref{lemma:div rotations} in the Appendix). This gives a contradiction with \eqref{eq:defi proj3} because $p_n(z_0)$ and $\gamma_n(z_0)$ are in the complement of $U$ for $n$ large, but at the same time $\mathcal R_{\pi_l^\pm(p_n)}(\gamma_n(z_0))$ should be in $U$ for $n$ large. The argument for $\hat y_\pm$ is completely analogous.
\end{proof}

\begin{remark}
We remark that the proof of Proposition \ref{prop:extension} does not use the full hypothesis that the surface on which the projections are defined is a boundary component of $\con(f)$, but only the property that whenever a sequence $P_{\gamma_n}$ of spacelike support planes converges to a lightlike plane, then this limit is a support plane too, which is true for any convex surface.
\end{remark}

\subsection{Invertibility of the projections}

The next step in the proof is to show that the projections $\pi_l^\pm$ and $\pi_r^\pm$ are bijective.

\begin{proposition}\label{prop:bijective}
The projections $\pi_l^\pm,\pi_r^\pm:\partial_\pm\con(f)\to\Hyp^2$ are bijective.
\end{proposition}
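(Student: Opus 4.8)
The plan is to establish injectivity and surjectivity separately, using in both cases the local model provided by the fundamental example of two spacelike planes meeting along a geodesic (Section \ref{sec:example}). I will treat $\pi_l^+$ on $\partial_+\con(f)$; the other cases are identical.

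\emph{Injectivity.} Suppose $p\neq p'$ in $\partial_+\con(f)$ have $\pi_l^+(p)=\pi_l^+(p')=z\in\Hyp^2$. Let $P_\gamma$ and $P_{\gamma'}$ be the chosen future support planes at $p$ and $p'$. By \eqref{defi projections}, $R_{\gamma^{-1}}(p)$ and $R_{(\gamma')^{-1}}(p')$ are both the order-two elliptic isometry $\mathcal R_z$, i.e. $p=\mathcal R_z\gamma$ and $p'=\mathcal R_z\gamma'$, so $p$ and $p'$ lie on the same right coset $\mathcal R_z\cdot\PSL(2,\R)$; concretely, $p(z)=p'(z)=\mathcal R_z(z)$, meaning both support planes, viewed through the dual correspondence, carry $z$ as a marked point. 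If $\gamma\neq\gamma'$, by the first item of Lemma \ref{lemma:intersection of two support planes} the planes $P_\gamma$ and $P_{\gamma'}$ intersect along a spacelike geodesic $g$, and $\gamma'\gamma^{-1}$ is hyperbolic. Applying $(\1,\gamma)$ we reduce to $\gamma=\1$, so $p=\mathcal R_z$; but then $p\in P_\1$ forces $z=\Fix(p)$ and, since $p\in P_\gamma\cap P_{\gamma'}$ lies on $g$, Corollary \ref{cor:all support} (in the form of Corollary \ref{cor: image does not depend}) and the normalization of Remark \ref{rmk:choice} — the chosen support plane is \emph{constant} on the component of multiply-supported points containing $g$ — give $P_\gamma=P_{\gamma'}$, contradiction. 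If instead $\gamma=\gamma'$ but $p\neq p'$, then after reducing to $\gamma=\1$ we would have $p=p'=\mathcal R_z$, again a contradiction. Hence $\pi_l^+$ is injective. The key mechanism is that the fundamental example pins down the image of the intersection geodesic independently of the choice, so distinct points cannot collapse.

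\emph{Surjectivity.} This is the main obstacle. Fix $z\in\Hyp^2$; I must produce $p\in\partial_+\con(f)$ with $\pi_l^+(p)=z$, i.e. a future support plane $P_\gamma$ and a point $p\in P_\gamma\cap\partial_+\con(f)$ with $p=\mathcal R_z\gamma$. The plan is a continuity/connectedness argument: consider the set $Z\subseteq\Hyp^2$ of points in the image of $\pi_l^+$. It is nonempty (it contains $\pi_l^+$ of any point). I will show $Z$ is both open and closed in $\Hyp^2$. Openness: near a point $p\in\partial_+\con(f)$ one can, after applying $(\1,\gamma)$ to arrange that the chosen support plane at $p$ is $P_\1$, use the local picture — either $\partial_+\con(f)$ is $C^1$ at $p$, where $\pi_l^+$ restricted to a neighbourhood behaves like the projection from a single spacelike plane and is an open map by Example \ref{ex: extension isometry}, or $p$ lies on an edge, where Proposition \ref{prop:proj simple} applied to the osculating two-plane model shows $\pi_l^+$ is locally surjective onto a neighbourhood of $z$. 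Closedness: if $z_n=\pi_l^+(p_n)\to z$, extract (by compactness of $\con(f)$ and of the space of planes, as in Lemma \ref{lem: convergence support planes}) $p_n\to p\in\con(f)$ and $P_{\gamma_n}\to P$. If $p\in\AdS^3$ then $p\in\partial_+\con(f)$, $P=P_\gamma$ is a future support plane, $\gamma_n\to\gamma$, and passing to the limit in $p_n=\mathcal R_{z_n}\gamma_n$ gives $p=\mathcal R_z\gamma$, so $z\in Z$. The dangerous case is $p\in\partial_\infty\AdS^3$: then $P$ is lightlike, and by the argument of Proposition \ref{prop:extension} (the identities \eqref{eq:defi proj3} and the facts that $\gamma_n$ and $p_n$ both diverge to the same boundary point) one would get $z_n\to x\in\partial_\infty\Hyp^2$, contradicting $z_n\to z\in\Hyp^2$. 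Hence $p$ stays in the interior and $Z$ is closed. Therefore $Z=\Hyp^2$ and $\pi_l^+$ is surjective.

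I expect the openness step to require the most care: one must check that the choice of support plane (Remark \ref{rmk:choice}) can be made compatibly in a neighbourhood so that $\pi_l^+$ is genuinely an open map even across the locus where $\partial_+\con(f)$ fails to be $C^1$, and here the reduction to the two-plane model of Proposition \ref{prop:proj simple} — where we already know the projection is a bijection onto $\Hyp^2$ — is what makes the argument work.
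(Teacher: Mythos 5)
Your injectivity argument contains an unjustified step which is false in general: from $\pi_l^+(p)=\pi_l^+(p')=z$ you correctly get $p=\mathcal R_z\gamma$ and $p'=\mathcal R_z\gamma'$, but there is no reason whatsoever why $p$ should lie on the intersection geodesic $g=P_\gamma\cap P_{\gamma'}$ (you simply assert ``$p\in P_\gamma\cap P_{\gamma'}$ lies on $g$''), nor is the identity $p(z)=p'(z)=\mathcal R_z(z)$ correct, since $p(z)=\mathcal R_z(\gamma(z))\neq z$ in general. Already in the two-plane prototype $z$ may lie in $\mathrm{int}(D_1)$, in which case $p\in\mathrm{int}(A_1)$ is far from $g$, and your appeal to Remark \ref{rmk:choice} gives nothing. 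The mechanism that actually works (and is the paper's argument) is to note that $P_\gamma$ and $P_{\gamma'}$ are admissible support planes at $p$ and $p'$ for the auxiliary convex set $\con(f^+_{\gamma,\gamma'})$, by Corollary \ref{cor:all support}, so that $\pi_l^\pm(p)$ and $\pi_l^\pm(p')$ coincide with the values of the model projection of Proposition \ref{prop:proj simple}, whose injectivity then separates them; this reduction is only vaguely alluded to in your last sentence and never carried out.

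The more serious gap is the openness step in your surjectivity argument. The left projection is \emph{not} an open map and is \emph{not} locally surjective at points of the bending locus, even in the two-plane model you invoke: if $p\in A_1\cap A_2$ and the chosen support plane along that geodesic is $P_{\exp(t_0\mathfrak a)\gamma_1}$, then the image of a small neighbourhood of $p$ consists of a one-sided region touching $\ell$ only near $\Fix(p\gamma_1^{-1})$, a one-sided region touching $\ell$ only near $\Fix(p\gamma_2^{-1})$, and an arc of $\ell$ around $\pi_l(p)$; these three pieces are based at three \emph{different} points of $\ell$, so the image contains no neighbourhood of $\pi_l(p)$. The points of $\Hyp^2$ close to $z$ on either side of $\ell$ are reached, if at all, by points of $\partial_\pm\con(f)$ far from $p$, so no ``osculating two-plane'' argument at $p$ can establish that the image is open; openness of the image is only known a posteriori, once surjectivity is proved. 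The paper replaces this false local claim by a global argument: after showing the image is closed (your closedness paragraph is essentially the paper's, except that one must invoke Corollary \ref{cor: image does not depend} because the limiting support plane need not be the chosen one), it assumes some $w$ is missed, takes the largest ball $B(w,r_0)$ disjoint from the closed image, picks $z=\pi_l^\pm(p)$ on $\partial B(w,r_0)$, and obtains a contradiction by producing entire geodesics $\ell_n\subset\mathrm{Im}(\pi_l^\pm)$ --- images of geodesics $L_n\subset P_{\gamma_n}\cap\partial_\pm\con(f)$ for $p_n\to p$ chosen on either side of $L$ --- which converge to $\ell=\pi_l^\pm(L)$ from both sides, so that one of the two families must eventually enter $B(w,r_0)$. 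This ingredient, namely that whole geodesics in the image accumulate on $\ell$ from both sides even though the projection is pointwise discontinuous there, is exactly what your open-and-closed strategy is missing, and without it the argument does not go through.
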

\begin{proof}
We give the proof for $\pi_l^\pm$, the proof for $\pi_r^\pm$ being completely identical. Let us first show that $\pi_l^\pm$ and $\pi_r^\pm$ are injective. Given $p_1,p_2\in\partial_\pm\con(f)$, let $P_{\gamma_1}$ and $P_{\gamma_2}$ be the support planes at $p_1$ and $p_2$ respectively. (If there are several support planes, we choose one, as in the definition of $\pi_l^\pm$ and $\pi_r^\pm$ --- see Remark \ref{rmk:choice}.) By Lemma \ref{lemma intersection two planes} and Lemma \ref{lemma:intersection of two support planes}, $\gamma_2\circ\gamma_1^{-1}$ is a hyperbolic isometry; let $D_1$ and $D_2$ be the convex envelopes in $\Hyp^2$ of the two intervals $I_1$ and $I_2$ with endpoints the fixed points of $\gamma_2\circ\gamma_1^{-1}$. Up to switching $\gamma_1$ and $\gamma_2$, we can moreover assume that $\gamma_2\circ\gamma_1^{-1}$ translates to the left seen from $D_1$ to $D_2$.

Now, we will use the example studied in Section \ref{sec:example}. Let $f^+_{\gamma_1,\gamma_2}$ be defined as in \eqref{eq:f+-}. By Corollary \ref{cor:all support}, $P_{\gamma_i}$ is the support plane of $\con(f^+_{\gamma_1,\gamma_2})$ at the point $p_i\in\partial_\pm\con(f^+_{\gamma_1,\gamma_2})$, for $i=1,2$. Hence $\pi_l^\pm(p_i)=\hat\pi_l^\pm(p_i)$, where    $\hat\pi_l^\pm$ is the left projection associated with $\con(f^+_{\gamma_1,\gamma_2})$. Since $\hat\pi_l^\pm(p_i)$ is bijective by  Proposition \ref{prop:proj simple}, $\pi_l^\pm(p_1)\neq \pi_l^\pm(p_2)$. This shows the injectivity.

To prove the surjectivity, we first show that the image is closed. Suppose $z_n=\pi_l^\pm(p_n)$ is a sequence in the image, with $\lim z_n=z\in\Hyp^2$. Up to extracting a subsequence, we can assume $p_n\to p\in \partial_\pm\con(f)\cup\gr(f)$. From Proposition \ref{prop:extension}, we have that $p\in\partial_\pm\con(f)$, because if $p=(x,y)\in\gr(f)$, then $\pi_l^\pm(p_n)\to x\in\partial_\infty\Hyp^2$, thus contradicting the hypothesis $z_n\to z\in\Hyp^2$. Now, let $P_{\gamma_n}$ be a support plane at $p_n$, which is spacelike by Proposition \ref{prop:support planes space or light}. By Lemma \ref{lem: convergence support planes}, up to extracting a subsequence, $\gamma_n\to\gamma\in\PSL(2,\R)$ and $P_{\gamma}$ is a spacelike support plane at $p$. It is important to remark that $\partial_\pm\con(f)$ might admit several support planes at $p$, and $P_{\gamma}$ might not be the support plane that has been chosen in the definition of $\pi_l^\pm$; however, by Corollary \ref{cor: image does not depend} the image does not depend on this choice.  Hence we can assume that $P_{\gamma}$ \emph{is} the support plane chosen at $p$. That is, from \eqref{defi projections}, $\pi_l^\pm(p)=\Fix(p\circ\gamma^{-1})$. We can now conclude that $z$ is in the image of $\pi_l^\pm$: on the one hand $z_n=\pi_l^\pm(p_n)=\Fix(p_n\circ\gamma_n^{-1})$ converges to $z$ by hypothesis, and on the other it converges to $\pi_l^\pm(p)=\Fix(p\circ\gamma^{-1})$ because $p_n\to p$, $\gamma_n\to\gamma$ and $\Fix$ is continuous. This shows that $z\in\pi_l^\pm(\partial_\pm\con(f))$, and therefore the image is closed.

We now proceed to show that $\pi_l^\pm$ is surjective. Suppose by contradiction that there is a point $w\in\Hyp^2$ which is not in the image of $\pi_l^\pm$. Let $r_0=\inf\{r\,|\,B(w,r)\cap \pi_l^\pm(\partial_\pm\con(f))\neq\emptyset\}$, where $B(w,r)$ is the open ball centered at $w$ of radius $r$ with respect to the hyperbolic metric of $\Hyp^2$. Since the image of $\pi_l^\pm$ is closed, we have that $r_0>0$, $B(w,r_0)$ is disjoint from the image of $\pi_l^\pm$, and there exists a point $z\in\partial B(w,r_0)$ which is in the image of $\pi_l^\pm$. Say that $z=\pi_l^\pm(p)$. We will obtain a contradiction by finding points close to $p$ which are mapped by $\pi_l^\pm$ inside $B(w,r_0)$.

Let $P_\gamma$ be a support plane of $\con(f)$ at $p$. By \eqref{eq:shape}, $P_{\gamma}\cap\con(f)$ is the convex hull of $\partial_\infty P_\gamma\cap\gr(f)$, which contains at least two points. If $p$ is in the interior of $P_{\gamma}\cap\con(f)$ (which is non-empty if and only if $\partial_\infty P_\gamma\cap\gr(f)$ contains at least three points), then the restriction of $\pi_l^\pm$ to the interior of $P_{\gamma}\cap\con(f)$ is an isometry onto its image in $\Hyp^2$, because $P_{\gamma}$ is the unique support plane at interior points $p'$, and  $\pi_l^\pm(p')=\Fix(p'\circ\gamma^{-1})$. Hence $\pi_l^\pm$ maps a small neighbourhood of $p$ to a neighbourhood of $z$, which intersects $B(w,r_0)$, giving a contradiction.

We are only left with the case where $p$ is not in the interior of $P_{\gamma}\cap\con(f)$. In this case, there is a geodesic $L$ contained in $P_{\gamma}\cap\con(f)$ such that $p\in L$. (The geodesic $L$ might be equal to $P_{\gamma}\cap\con(f)$ or not.) As before, the image of $L$ is a geodesic $\ell$ in $\Hyp^2$ because $(\pi_l^\pm)|_L$ is an isometry onto its image, and $z\in \ell$. We claim that in the image of $\pi_l^\pm$ there are two sequences of geodesics $\ell_n\subset\mathrm{Im}(\pi_l^\pm)$ converging to $\ell$ (in other words, such that the endpoints of $\ell_n$ converge to the endpoint of $\ell$); moreover the two sequences are contained in different connected components of $\Hyp^2\setminus\ell$. This will give a contradiction, because for one of these two sequences, $\ell_n$ must intersect $B(w,r_0)$ for $n$ large.

To show the claim, and thus conclude the proof, observe that $L$ disconnects $\partial_\pm\con(f)$ in two connected components, and let $p_n$ be a sequence converging to $p$ contained in one connected component of $\partial_\pm\con(f)\setminus L$. Let $P_{\gamma_n}$ be the support plane for $\con(f)$ at $p_n$ which has been chosen to define $\pi_l^\pm$. By Lemma \ref{lem: convergence support planes}, $P_{\gamma_n}$ converges to a support plane $P_{\gamma}$ at $p$, which as before we can assume is the support plane that defined $\pi_l^\pm$ at $p$, since the image does not depend on this choice by Corollary \ref{cor: image does not depend}. Also, we can assume that each $p_n$ is contained in a geodesic $L_n$ in $P_{\gamma_n}\cap\partial_\pm\con(f)$: indeed, it suffices to replace $p_n$ by the point in $P_{\gamma_n}\cap\partial_\pm\con(f)$ which is closest to $p$ (where closest is with respect to the induced metric on $\partial_\pm\con(f)$, or to any auxiliary Riemannian metric). If $P_{\gamma_n}\cap\partial_\pm\con(f)$ is not already a geodesic, with this assumption $p_n$ now belongs to a boundary component which is the geodesic $L_n$. As observed before, $\pi_l^\pm$ maps the geodesic $L_n$ to a geodesic $\ell_n=\pi_l^\pm(L_n)$ in $\Hyp^2$, and (as in the argument that showed that $\mathrm{Im}(\pi_l^\pm)$ is closed), the limit of $\pi_l^\pm(p_n)$ is a point in $\ell=\pi_l^\pm(L)$. 

Moreover $\ell_n\cap\ell=\emptyset$, and the $\ell_n$ are all contained in the same 
connected component of $\Hyp^2\setminus\ell$: this follows from observing again (compare with the injectivity at the beginning of this proof) that $(\pi_l^\pm)|_{L_n\cup L}$ equals the left projection associated with the surface $\partial_\pm\con(f^+_{\gamma_n,\gamma})$ studied in Section \ref{sec:example}, where $f^+_{\gamma_1,\gamma_2}$ is defined in \eqref{eq:f+-}, and thus maps $\partial_\pm\con(f)\cap P_{\gamma_n}$ (which in particular contains $L_n$) to a subset (containing $\ell_n$) disjoint from $\ell$ and included in a connected component of $\Hyp^2\setminus \ell$ which does not depend on $n$.

This implies that $\ell_n$ converges to $\ell$ as $n\to+\infty$. Clearly if we had chosen $p_n$ in the other connected component of $\partial_\pm\con(f)\setminus L$, then the $\ell_n$ would be contained in the other connected component of $\Hyp^2\setminus \ell$. This concludes the claim and thus the proof.
\end{proof}

As a consequence, the composition $\pi_r^\pm\circ (\pi_l^\pm)^{-1}$ is well-defined and is a bijection of $\Hyp^2$ to itself. Combining with 
Proposition \ref{prop:extension}, we get:

\begin{corollary}\label{cor:bijective+extension}
The composition $\pi_r^\pm\circ (\pi_l^\pm)^{-1}$ extends to a bijection from $\Hyp^2\cup\partial_\infty\Hyp^2$ to itself, which equals $f$ on $\partial_\infty\Hyp^2$ and is continuous at any point of $\partial_\infty\Hyp^2$.
\end{corollary}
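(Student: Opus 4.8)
The plan is to extend both projections to the closure $\overline{\partial_\pm\con(f)}$, check that the extensions are bijections onto $\overline\Hyp^2$, and then extract continuity at the ideal boundary from Propositions \ref{prop:bijective} and \ref{prop:extension}. First I would note that, by Corollary \ref{cor:past or future}, $\overline{\partial_\pm\con(f)}=\partial_\pm\con(f)\cup\gr(f)$ is a closed topological disc, and define $\overline{\pi_l^\pm},\overline{\pi_r^\pm}\colon\overline{\partial_\pm\con(f)}\to\overline\Hyp^2$ to agree with $\pi_l^\pm$, resp. $\pi_r^\pm$, on $\partial_\pm\con(f)$, and to send $(x,y)\in\gr(f)$ to $x$, resp. to $y$. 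By Proposition \ref{prop:bijective} each of these restricts to a bijection of the interior onto $\Hyp^2$, and each restricts to a bijection of $\gr(f)$ onto $\partial_\infty\Hyp^2$ --- for $\overline{\pi_l^\pm}$ this is the projection to the first factor, for $\overline{\pi_r^\pm}$ the projection to the second factor, which is bijective because $f$ is a homeomorphism. Since interior points are sent into $\Hyp^2$ and points of $\gr(f)$ into $\partial_\infty\Hyp^2$, both $\overline{\pi_l^\pm}$ and $\overline{\pi_r^\pm}$ are bijections of $\overline{\partial_\pm\con(f)}$ onto $\overline\Hyp^2$; hence $\overline E:=\overline{\pi_r^\pm}\circ(\overline{\pi_l^\pm})^{-1}$ is a bijection of $\overline\Hyp^2$ restricting to $\pi_r^\pm\circ(\pi_l^\pm)^{-1}$ on $\Hyp^2$, and unwinding the definitions, $(\overline{\pi_l^\pm})^{-1}(x_0)=(x_0,f(x_0))$ and $\overline{\pi_r^\pm}(x_0,f(x_0))=f(x_0)$, so $\overline E$ restricts to $f$ on $\partial_\infty\Hyp^2$.

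It then remains to prove continuity of $\overline E$ at an arbitrary $x_0\in\partial_\infty\Hyp^2$. Since $\overline{\pi_r^\pm}$ is continuous at every point of $\gr(f)$ --- this is exactly Proposition \ref{prop:extension}, the sequences lying within $\gr(f)$ being trivial --- it is enough to prove that $(\overline{\pi_l^\pm})^{-1}$ is continuous at $x_0$, for then the composition $\overline E$ is continuous at $x_0$. So I would take $z_n\to x_0$ in $\overline\Hyp^2$, set $q_n:=(\overline{\pi_l^\pm})^{-1}(z_n)$, and, using compactness of $\overline{\partial_\pm\con(f)}$, reduce to showing that every convergent subsequence of $(q_n)$ has limit $(x_0,f(x_0))$. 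The heart of the matter is to rule out that such a limit $q$ lies in the interior $\partial_\pm\con(f)$: if it did, then $q\in\AdS^3$, and choosing the support planes $P_{\gamma_n}$ at $q_n$ fixed in the definition of $\pi_l^\pm$, Lemma \ref{lem: convergence support planes} would give $\gamma_n\to\gamma\in\PSL(2,\R)$ along a further subsequence; but then $z_n=\pi_l^\pm(q_n)=\Fix(q_n\gamma_n^{-1})$ would converge to $\Fix(q\gamma^{-1})\in\Hyp^2$, by continuity of multiplication in $\PSL(2,\R)$ and of $\Fix$ on $P_\1$, contradicting $z_n\to x_0\in\partial_\infty\Hyp^2$. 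Hence $q\in\gr(f)$, and Proposition \ref{prop:extension} forces the first coordinate of $q$ to equal $\lim_n z_n=x_0$, i.e.\ $q=(x_0,f(x_0))$. This proves $q_n\to(x_0,f(x_0))$, hence the continuity of $(\overline{\pi_l^\pm})^{-1}$ at $x_0$, and therefore of $\overline E$ at $x_0$.

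I expect the only genuine obstacle to be the step that excludes an interior limit point of $(q_n)$; everything else is bookkeeping with the already-established Propositions \ref{prop:bijective} and \ref{prop:extension}. The one subtlety there is that one must work with the support planes actually chosen in the definition of $\pi_l^\pm$, so that the identity $z_n=\Fix(q_n\gamma_n^{-1})$ holds on the nose; however, only the fact that $\Fix(q\gamma^{-1})$ lies in $\Hyp^2$ --- not its precise value --- is needed to reach the contradiction, so the argument is insensitive to those choices.
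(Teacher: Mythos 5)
Your proposal is correct and follows essentially the same route as the paper: the extension and its bijectivity come from Propositions \ref{prop:bijective} and \ref{prop:extension}, and continuity is reduced to continuity of $(\pi_l^\pm)^{-1}$ at ideal points, excluding an interior subsequential limit by the same compactness-of-support-planes argument (via Lemma \ref{lem: convergence support planes}) that the paper reuses from its proof that $\mathrm{Im}(\pi_l^\pm)$ is closed. Your remark that only $\Fix(q\gamma^{-1})\in\Hyp^2$, and not its exact value, is needed is a clean way to sidestep the support-plane-choice issue that the paper instead handles by citing Corollary \ref{cor: image does not depend}.
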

\begin{proof}
Since $\pi_l^\pm$ and $\pi_r^\pm$ are bijective and extend to the bijections from $\gr(f)$ to $\partial_\infty\Hyp^2$ sending  $(x,y)$ to $x$ and $y=f(x)$ respectively, the composition  $\pi_r^\pm\circ (\pi_l^\pm)^{-1}$ extends to a bijection of $\Hyp^2\cup \partial_\infty\Hyp^2$ to itself sending $x$ to $f(x)$. 

We need to check that this extension is continuous at any point of $\partial_\infty\Hyp^2$. Proposition \ref{prop:extension} shows that the extensions of $\pi_l^\pm$ and $\pi_r^\pm$ to $\partial_\pm\con(f)\cup\gr(f)$ are continuous at any point of $\gr(f)$. 
Hence it remains to check that $(\pi_l^\pm)^{-1}$ is continuous at any point of $\partial_\infty\Hyp^2$. 

This follows from a standard argument: let $z_n$ be a sequence in $\Hyp^2\cup\partial_\infty\Hyp^2$ converging to $x\in \partial_\infty\Hyp^2$, and let $p_n=(\pi_l^\pm)^{-1}(z_n)$. Up to extracting a subsequence, $p_n\to p$. The limit $p$ must be in $\gr(f)$, because if $p\in \partial_\pm\con(f)$, although $\pi_l^\pm$ might not be continuous there, we have already seen in Proposition \ref{prop:bijective} (see the proof that the image of $\pi_l^\pm$ is closed) that $\lim_n\pi_l^\pm(p_n)=\lim_n z_n$ is a point of $\Hyp^2$, thus giving a contradiction with $\lim_n z_n=x\in \partial_\infty\Hyp^2$. If $p\in\gr(f)$, then we can use the continuity and injectivity  of $\pi_l^\pm$ on $\gr(f)$ to infer that $p=(\pi_l^\pm)^{-1}(x)$. This concludes the proof.
\end{proof}

\subsection{Earthquake properties}

The last step which is left to prove is the verification that $\pi_r^\pm\circ (\pi_l^\pm)^{-1}$ satisfies the properties defining earthquake maps. 

\begin{proposition}\label{prop:earthquake}
The composition $\pi_r^-\circ (\pi_l^-)^{-1}:\Hyp^2\to\Hyp^2$ is a left earthquake map. Analogously, $\pi_r^+\circ (\pi_l^+)^{-1}:\Hyp^2\to\Hyp^2$ is a right earthquake map. 
\end{proposition}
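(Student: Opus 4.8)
The plan is to exhibit a geodesic lamination $\lambda$ of $\Hyp^2$ witnessing that $E:=\pi_r^-\circ(\pi_l^-)^{-1}$ is a left earthquake; the map $\pi_r^+\circ(\pi_l^+)^{-1}$ is treated word for word by exchanging \emph{past} with \emph{future} and \emph{left} with \emph{right}. By Proposition \ref{prop:bijective} the map $E$ is a bijection of $\Hyp^2$, and by Corollary \ref{cor:bijective+extension} it extends continuously to $f$ on $\partial_\infty\Hyp^2$, so only the earthquake structure remains to be checked against Definition \ref{defi:eart}. I would let $\mathcal L$ be the set of \emph{bending lines} of $\partial_-\con(f)$, i.e. the spacelike geodesics $L=P_{\gamma_1}\cap P_{\gamma_2}$ arising from two distinct support planes of $\con(f)$ at points of $\partial_-\con(f)$; by the second item of Lemma \ref{lemma:intersection of two support planes} each such $L$ is contained in $\partial_-\con(f)$, and along $L$ the projection $\pi_l^-$ is the restriction of a global isometry attached to any admissible (and, by Remark \ref{rmk:choice}, constant) choice of support plane, so $\pi_l^-(L)$ is a complete geodesic of $\Hyp^2$, independent of that choice by Corollary \ref{cor: image does not depend}. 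Set $\lambda:=\{\pi_l^-(L):L\in\mathcal L\}$.

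I would then verify that $\lambda$ is a geodesic lamination. Two distinct bending lines are disjoint: if they met at a point $q$, all support planes at $q$ would share a common geodesic by Corollary \ref{cor: image does not depend}, forcing the two lines to coincide; since $\pi_l^-$ is injective, the leaves of $\lambda$ are pairwise disjoint. Closedness of the support $X:=\bigcup\lambda$ follows from a compactness argument on support planes using Lemma \ref{lem: convergence support planes} together with Proposition \ref{prop:extension}, in the same spirit as the surjectivity argument in the proof of Proposition \ref{prop:bijective}. By the second item of Lemma \ref{lemma:intersection of two support planes} a point of $\partial_-\con(f)$ lying on no bending line has a \emph{unique} support plane, so the images under $\pi_l^-$ of the flat convex pieces $P_\gamma\cap\con(f)$ are exactly the gaps of $\lambda$. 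Hence every stratum $S$ of $\lambda$ carries an associated support plane $P_{\gamma_S}$, constant along $(\pi_l^-)^{-1}(S)$, with $\pi_l^-=\Fix\circ R_{\gamma_S^{-1}}$ and $\pi_r^-=\Fix\circ L_{\gamma_S^{-1}}$ there, so by Example \ref{ex: extension isometry} (Equation \eqref{eq:compose plane}) $E|_S$ is the restriction of the global isometry $\gamma_S^{-1}\in\PSL(2,\R)$. In choosing, along each bending line $L$, the constant support plane, I would take one that is \emph{not} the plane of an adjacent gap (possible since the admissible planes along $L$ form a genuine interval by Corollary \ref{cor: image does not depend}, whose endpoints are the adjacent flat planes); since distinct gaps have distinct planes (each $P_\gamma\cap\con(f)$ is convex, hence connected), this makes $\gamma_S\neq\gamma_{S'}$ for all distinct strata $S,S'$.

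For distinct strata $S,S'$ the comparison isometry is $\mathrm{Comp}(S,S')=(E|_S)^{-1}\circ E|_{S'}=\gamma_S\circ\gamma_{S'}^{-1}$, which by the above is never the identity. Since $P_{\gamma_S}$ and $P_{\gamma_{S'}}$ are support planes at points of $\partial_-\con(f)$, Lemma \ref{lemma intersection two planes} and the first item of Lemma \ref{lemma:intersection of two support planes} give that $\gamma_{S'}\circ\gamma_S^{-1}$ is hyperbolic, with some axis $\ell$. Applying Proposition \ref{prop:proj simple} and its proof to the auxiliary homeomorphism $f^+_{\gamma_S,\gamma_{S'}}$ of \eqref{eq:f+-}: the half-planes $A_S\subset P_{\gamma_S}$, $A_{S'}\subset P_{\gamma_{S'}}$ cut out by $L_0:=P_{\gamma_S}\cap P_{\gamma_{S'}}$ satisfy $(\pi_l^-)^{-1}(S)\subseteq P_{\gamma_S}\cap\con(f)\subseteq A_S$ and symmetrically for $S'$ (because each of $P_{\gamma_S},P_{\gamma_{S'}}$ being a support plane forces $\con(f)$ to the corresponding side of $L_0$), and there $\pi_l^-$ agrees with the left projection of $\con(f^+_{\gamma_S,\gamma_{S'}})$; writing $D_S:=\pi_l^-(A_S)$ and $D_{S'}:=\pi_l^-(A_{S'})$ for the two closed half-planes bounded by $\ell=\pi_l^-(L_0)$, this gives $S\subseteq D_S$ and $S'\subseteq D_{S'}$, so $\ell$ weakly separates $S$ and $S'$.

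It remains to pin down the direction. Because $P_{\gamma_S}$ is a \emph{past} support plane, the half-plane $A_{S'}$ — whose boundary $L_0$ lies in $P_{\gamma_S}$ and which meets $\con(f)$ — lies in the future closed half-space of $P_{\gamma_S}$; likewise $A_S$ lies in the future closed half-space of $P_{\gamma_{S'}}$. Hence $P_{\gamma_S}$ and $P_{\gamma_{S'}}$ are past support planes of $\con(f^+_{\gamma_S,\gamma_{S'}})$ as well, so by Proposition \ref{prop:past vs future} the union $A_S\cup A_{S'}$ is the past boundary component of $\con(f^+_{\gamma_S,\gamma_{S'}})$, equivalently $\gamma_{S'}\circ\gamma_S^{-1}$ translates to the \emph{right} as seen from $D_S$ to $D_{S'}$. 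Therefore $\mathrm{Comp}(S,S')=(\gamma_{S'}\circ\gamma_S^{-1})^{-1}$ translates to the \emph{left} as seen from $D_S\supseteq S$ to $D_{S'}\supseteq S'$, hence to the left seen from $S$ to $S'$, which is exactly the condition for a left earthquake; the $+$ case is identical. I expect the main obstacle to lie in the first two steps — proving rigorously that $\lambda$ is a genuine geodesic lamination (closedness of its support, and identifying its gaps with the flat pieces of $\partial_-\con(f)$) and organizing the support-plane choices so that no comparison isometry violates Definition \ref{defi:eart}; by contrast, the consistency of the translation direction, which is the conceptual heart of the statement, follows cleanly from Propositions \ref{prop:proj simple} and \ref{prop:past vs future}.
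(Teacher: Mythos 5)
Your overall architecture (reduce each pair of strata to the two-plane model of Section \ref{sec:example}, then read off the translation direction from Propositions \ref{prop:proj simple} and \ref{prop:past vs future}) is the paper's, and that part of your argument is sound. The genuine gap is in the definition of the lamination. You take $\mathcal L$ to be only the \emph{bending lines}, i.e.\ the geodesics $P_{\gamma_1}\cap P_{\gamma_2}$ along which $\con(f)$ admits at least two support planes. These are exactly the ``atoms'' of the pleating, and in general they form a strictly smaller family than the one you need. A point of $\partial_-\con(f)$ can have a \emph{unique} support plane $P_\gamma$ and still fail to lie in the interior of $P_\gamma\cap\con(f)$: this happens whenever $P_\gamma\cap\con(f)$ is reduced to a single geodesic (i.e.\ $\partial_\infty P_\gamma\cap\gr(f)$ has exactly two points), and more generally on boundary geodesics of flat pieces on which no atom sits. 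The extreme case makes the failure vivid: if $f$ comes from an earthquake along a lamination with non-atomic transverse measure (e.g.\ the equivariant homeomorphism attached to an irrational lamination on a closed surface), then \emph{every} point of $\partial_-\con(f)$ has a unique support plane, your $\mathcal L$ is empty, your $\lambda$ is empty, and your claimed identification of the gaps would force $E$ to be a single global isometry of $\Hyp^2$ --- which it is not. In mixed cases the same defect shows up as a failure of closedness: bending lines can accumulate on a non-atomic leaf whose image is then a limit of leaves of your $\lambda$ but not itself a leaf. So the step ``a point on no bending line has a unique support plane, hence the gaps are exactly the images of the flat pieces'' is where the argument breaks: uniqueness of the support plane at $p$ does not put $p$ in the interior of a flat piece.

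The repair is the definition used in the paper: take $\mathcal L$ to be the collection of all connected components of $(P_\gamma\cap\partial_-\con(f))\setminus\mathrm{int}\bigl(P_\gamma\cap\partial_-\con(f)\bigr)$ as $P_\gamma$ ranges over all support planes --- that is, all boundary geodesics of flat pieces with nonempty interior \emph{together with} all degenerate flat pieces consisting of a single geodesic --- and set $\lambda=\{\pi_l^-(L):L\in\mathcal L\}$. With this $\mathcal L$ the gaps really are the images of the interiors of the nondegenerate flat pieces, closedness of the support follows from the convergence-of-support-planes argument you cite, and the rest of your proof (including your device of choosing an interior support plane along each atom to make every comparison isometry nontrivial, which is permissible but unnecessary since Definition \ref{defi:eart} allows the identity when one stratum lies in the closure of the other) goes through essentially as in the paper.
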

\begin{proof}
First, let us define a geodesic lamination $\lambda$. Let us consider all the support planes $P_\gamma$  of $\con(f)$ at points of $\partial_\pm\con(f)$ (which are necessarily spacelike by Proposition \ref{prop:support planes space or light}). Define $\mathcal L$ to be the collection of all the connected components of $(P_\gamma\cap \partial_\pm\con(f))\setminus\mathrm{int}(P_\gamma\cap \partial_\pm\con(f))$, as $P_{\gamma}$ varies over all support planes. As observed before, by \eqref{eq:shape} $P_\gamma\cap \partial_\pm\con(f)$ is the convex hull in $P_\gamma$ of  $\partial_\infty P_\gamma\cap\gr(f)$, which consists of at least two points. If it consists of exactly two points, then  $P_\gamma\cap \partial_\pm\con(f)$ is a spacelike geodesic $L$; otherwise $P_\gamma\cap \partial_\pm\con(f)$ has nonempty interior and each connected component of its boundary is a spacelike geodesic. Now, $\pi_l^\pm$ is an isometry onto its image when restricted to any $L\in\mathcal L$ (which might depend on the choice of a support plane if there are several support planes at points of $L$, but the image does not depend on this choice by Corollary \ref{cor: image does not depend}). Hence we define $\lambda$ to be the collection of all the $\pi_l^\pm(L)$ as $L$ varies in $\mathcal L$. 

To show that $\lambda$ is a geodesic lamination, we first observe that the geodesics $\ell\in\lambda$ are pairwise disjoint, because the spacelike geodesics $L$ in $\mathcal L$ are pairwise disjoint and $\pi_l^\pm$ is injective. Then it remains to show that their union is a closed subset of $\Hyp^2$. This follows immediately from  the proof of Proposition \ref{prop:bijective}. Indeed, suppose that $\ell_n=\pi_l^\pm(L_n)$ converges to $\ell=\pi_l^\pm(L)$, and let $z_n=\pi_l^\pm(p_n)\in\ell_n$ be a sequence converging to $z\in\ell$.
Since $\mathrm{Im}(\pi_l^\pm)$ is closed, $z\in\mathrm{Im}(\pi_l^\pm)$,  and since $\pi_l^\pm$ is injective, $z=\pi_l^\pm(p)$ for some $p\in L$. Then in the last part of the proof of Proposition \ref{prop:bijective} we have shown that in this situation $\ell_n$ converges to $\ell$. 

Having shown that $\lambda$ is a geodesic lamination, we are ready to check that  $\pi_r^-\circ (\pi_l^-)^{-1}$ is an earthquake map. Observe that the gaps of $\lambda$ are precisely the images under $\pi_l^\pm$ of the interior of the sets $P_\gamma\cap \partial_\pm\con(f)$ (when this intersection is not reduced to a geodesic), as $P_{\gamma}$ varies among all support planes. 

Let $S_1$ and $S_2$ be two strata of $\lambda$, and let $\Sigma_i=(\pi_l^\pm)^{-1}(S_i)$. Hence $\Sigma_i\subset P_{\gamma_i}\cap \partial_\pm\con(f)$, where $P_{\gamma_i}$ is a  support plane. As usual, there might be several support planes at points of $\Sigma_i$, and this can occur only if $\Sigma_i$ is reduced to a geodesic by Lemma \ref{lemma:intersection of two support planes}. Recalling from Remark \ref{rmk:choice} that the chosen support plane is assumed to be constant along $\Sigma_i$, we can  suppose that $P_{\gamma_i}$ is the support plane  chosen in the definition of $\pi_l^\pm$ and $\pi_r^\pm$. 

Now we proceed as in the proof of injectivity in Proposition \ref{prop:bijective}. Consider first the case that $\gamma_1\neq\gamma_2$. By Lemma \ref{lemma intersection two planes} and Lemma \ref{lemma:intersection of two support planes}, $\gamma_2\circ\gamma_1^{-1}$ is a hyperbolic isometry; let $D_1$ and $D_2$ be the convex envelopes in $\Hyp^2$ of the two intervals $I_1$ and $I_2$ with endpoints the fixed points of $\gamma_2\circ\gamma_1^{-1}$. Up to switching $\gamma_1$ and $\gamma_2$, we assume that $\gamma_2\circ\gamma_1^{-1}$ translates to the left seen from $D_1$ to $D_2$. Then $(\pi_l^\pm)|_{\Sigma_i}=(\hat\pi_l^\pm)|_{\Sigma_i}$ and $(\pi_r^\pm)|_{\Sigma_i}=(\hat\pi_r^\pm)|_{\Sigma_i}$, where    $\hat\pi_l^\pm$ and $\hat\pi_r^\pm$ are the left and right projections associated with $\con(f^+_{\gamma_1,\gamma_2})$, and moreover $S_i\subset D_i$. By the second part of Proposition \ref{prop:proj simple}, the comparison isometry $\widehat{\mathrm{Comp}}(D_1,D_2)$ of the map $\hat\pi_r^\pm\circ (\hat\pi_l^\pm)^{-1}$ translates to the left (for $\pi_r^-$ and $\pi_l^-$) or right (for $\pi_r^+$ and $\pi_l^+$) seen from $D_1$ to $D_2$. Then ${\mathrm{Comp}}(S_1,S_2)$, which is indeed equal to $\widehat{\mathrm{Comp}}(D_1,D_2)$, translates to the left (or right) seen from $S_1$ to $S_2$. 

Finally, we instead consider the case  $\gamma_1=\gamma_2$, which can only happen either if $\Sigma_1=\Sigma_2$ (hence $S_1=S_2$) or if $\Sigma_1$ has nonempty interior and $\Sigma_2$ is one of its boundary components (or vice versa). In this case we clearly have ${\mathrm{Comp}}(S_1,S_2)=\mathrm{id}$. But the comparison isometry is indeed  allowed in Definition \ref{defi:eart} to be the identity, when one of the two strata is contained in the closure of the other. This concludes the proof.
\end{proof}

The proof of Thurston's earthquake theorem (Theorem \ref{thm:main}) is thus complete.

\subsection{Recovering earthquakes of closed surfaces}\label{subsec:closed}

In this final section, we  recover (Corollary \ref{cor:invariance}) the existence of earthquake maps between two homeomorphic closed hyperbolic surfaces.

Given  a group $G$ and two representations $\rho,\varrho:G\to\PSL(2,\R)$, we say that a map $F$ from $\Hyp^2$ (or $\partial_\infty\Hyp^2$) to itself is $(\rho,\varrho)$-equivariant if it satisfies
$$F\circ\rho(g)=\varrho(g)\circ F
$$
for every $g\in G$.

\begin{corollary}\label{cor:invariance}
Let $S$ be a closed oriented surface and let $\rho,\varrho:\pi_1(S)\to\PSL(2,\R)$ be two Fuchsian representations. Then there exists a $(\rho,\varrho)$-equivariant left earthquake map of $\Hyp^2$, and a $(\rho,\varrho)$-equivariant right earthquake map.
\end{corollary}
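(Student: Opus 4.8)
The plan is to deduce Corollary~\ref{cor:invariance} from Theorem~\ref{thm:main} together with the equivariance properties of the convex hull construction and of the left and right projections. Since $\rho$ and $\varrho$ are Fuchsian, the map $h:=\varrho\circ\rho^{-1}$, a priori defined only on the subgroup $\rho(\pi_1(S))$, extends to an orientation-preserving homeomorphism $f:\partial_\infty\Hyp^2\to\partial_\infty\Hyp^2$: this is the classical fact that a conjugacy between two cocompact Fuchsian groups on their limit sets extends to the boundary circle (equivalently, $\rho$ and $\varrho$ being holonomies of homeomorphic closed hyperbolic surfaces, there is a quasi-symmetric --- hence homeomorphic --- boundary map intertwining them). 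By construction $f$ satisfies $f\circ\rho(g)=\varrho(g)\circ f$ for all $g\in\pi_1(S)$, that is, $f$ is $(\rho,\varrho)$-equivariant on $\partial_\infty\Hyp^2$.

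Next I would apply Theorem~\ref{thm:main} to this $f$, producing left and right earthquake maps $E^\pm=\pi_r^\pm\circ(\pi_l^\pm)^{-1}$ extending $f$. The point is that these maps are \emph{canonically} associated with $f$ up to the mild choice of support planes in Remark~\ref{rmk:choice}, and hence inherit the equivariance of $f$. Concretely, consider $g\in\pi_1(S)$ and the isometry $(\rho(g),\varrho(g))\in\PSL(2,\R)\times\PSL(2,\R)$ of $\AdS^3$. By \eqref{eq:action graph}, $(\rho(g),\varrho(g))\cdot\gr(f)=\gr(\varrho(g)\circ f\circ\rho(g)^{-1})=\gr(f)$ by the equivariance of $f$. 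Since $\con(f)$ is intrinsically the convex hull of $\gr(f)$ (independent of the affine chart), it follows that $(\rho(g),\varrho(g))$ preserves $\con(f)$, hence preserves its past and future boundary components $\partial_\pm\con(f)$, and maps support planes to support planes. Then Lemma~\ref{lem:left right proj}, applied with $(\alpha,\beta)=(\rho(g),\varrho(g))$ and using that $\hat K=K=\con(f)$, gives $\pi_l^\pm\circ(\rho(g),\varrho(g))=\rho(g)\circ\pi_l^\pm$ and $\pi_r^\pm\circ(\rho(g),\varrho(g))=\varrho(g)\circ\pi_r^\pm$, provided the chosen support planes are compatible with the $(\rho(g),\varrho(g))$-action. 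Composing, $E^\pm\circ\rho(g)=\pi_r^\pm\circ(\pi_l^\pm)^{-1}\circ\rho(g)=\pi_r^\pm\circ(\rho(g),\varrho(g))\circ(\pi_l^\pm)^{-1}=\varrho(g)\circ\pi_r^\pm\circ(\pi_l^\pm)^{-1}=\varrho(g)\circ E^\pm$, which is exactly $(\rho,\varrho)$-equivariance of $E^\pm$.

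The one genuinely delicate point, which I would treat with care, is the choice of support planes: Lemma~\ref{lem:left right proj} holds only when the support planes selected for $\hat K$ are the images under $(\alpha,\beta)$ of those selected for $K$, whereas in Remark~\ref{rmk:choice} we made an arbitrary choice, one per connected component of the set of points with non-unique support plane. I would resolve this by noting that the group $\Gamma:=\{(\rho(g),\varrho(g)):g\in\pi_1(S)\}$ acts on $\partial_\pm\con(f)$ preserving the (closed, $\Gamma$-invariant) set of points admitting several support planes, hence permuting its connected components; one then chooses support planes equivariantly, i.e.\ picks one component in each $\Gamma$-orbit, makes a choice there, and propagates it by the $\Gamma$-action. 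This is possible because $\Gamma$ acts freely on $\partial_\pm\con(f)$ --- indeed $\Gamma$ acts freely and properly discontinuously on $\Hyp^2$ via $\rho$ (being a cocompact Fuchsian group), and $\pi_l^\pm$ conjugates the $\Gamma$-action on $\partial_\pm\con(f)$ to the $\rho$-action on $\Hyp^2$ by the equivariance just established for any fixed equivariant choice. With this coherent choice, Corollary~\ref{cor: image does not depend} guarantees that $E^\pm$ is unchanged (as $\pi_l^\pm$ and $\pi_r^\pm$ have choice-independent images on the relevant geodesics), so the resulting $E^\pm$ is a well-defined $(\rho,\varrho)$-equivariant left (resp.\ right) earthquake map, completing the proof.
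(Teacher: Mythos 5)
Your overall strategy is the same as the paper's: take the unique $(\rho,\varrho)$-equivariant boundary homeomorphism $f$, observe via \eqref{eq:action graph} that the group $\Gamma=\{(\rho(g),\varrho(g))\}$ preserves $\gr(f)$ and hence $\con(f)$ and its two boundary components, and then obtain equivariance of $E^\pm$ from Lemma \ref{lem:left right proj}, the only delicate point being an equivariant choice of support planes. You correctly identify that delicate point, but your resolution of it has a genuine gap. The set of points of $\partial_\pm\con(f)$ with several support planes is a disjoint union of spacelike geodesics $L$, and the problem is not whether $\Gamma$ acts freely on \emph{points}: it is that a nontrivial element $(\rho(g),\varrho(g))$ may preserve one of these geodesics $L_i$ \emph{setwise} (acting on it by translation), and this is exactly what happens in the typical closed-surface case when a leaf of the bending lamination covers a closed leaf downstairs, the setwise stabilizer being infinite cyclic. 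In that situation your recipe ``pick one component per $\Gamma$-orbit, choose a plane there, propagate by the action'' is a priori inconsistent: for $h$ in the stabilizer of $L_i$ you would need $h\cdot P_i=P_i$, and freeness of the point action gives you nothing here. (Your argument for freeness is moreover circular, since it invokes the equivariance of $\pi_l^\pm$, which is only available once the equivariant choice of support planes has already been made.)

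The paper closes precisely this gap with an intermediate claim that you are missing: if $(\rho(g),\varrho(g))$ preserves such a geodesic $L$, then it preserves \emph{every} support plane along $L$. The proof uses Corollary \ref{cor: image does not depend} to normalize the fan of support planes at $L$ to $P_{\exp(t\mathfrak a)}$, $t\in[0,1]$, with $\gamma=\exp(\mathfrak a)$ hyperbolic; the element must preserve the pair of extreme planes $P_\1$, $P_\gamma$, cannot swap them (swapping would force $\rho(g)\varrho(g)^{-1}=\gamma$ and $\rho(g)\gamma\varrho(g)^{-1}=\1$, making the hyperbolic element $\gamma$ of order two), hence fixes both, which forces $\rho(g)=\varrho(g)=\exp(s\mathfrak a)$ and therefore fixes each $P_{\exp(t\mathfrak a)}$. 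With that claim, the orbit-by-orbit choice you describe is well defined and the rest of your argument (Lemma \ref{lem:left right proj} plus the composition computation) goes through as in the paper. As written, though, the key step justifying the equivariant choice is absent, so the proof is incomplete at its only nontrivial point.
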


\begin{proof}
Let $f:\partial_\infty\Hyp^2\to\partial_\infty\Hyp^2$ be the unique $(\rho,\varrho)$-equivariant orientation-preserving homeomorphism. We claim that there exists a left (resp. right) earthquake as in Theorem \ref{thm:main}, which is itself  $(\rho,\varrho)$-equivariant. For this purpose, observe that for any $g\in\pi_1(S)$, the pair $(\rho(g),\varrho(g))\in\PSL(2,\R)\times\PSL(2,\R)$ acts on $\partial_\infty\AdS^3$ preserving $\gr(f)$, since by \eqref{eq:action graph} and the definition of  $(\rho,\varrho)$-equivariant,  
$$(\rho(g),\varrho(g))\cdot\gr(f)=\gr(\varrho(g)\circ f\circ\rho^{-1}(g))=\gr(f)~.$$
 Hence the convex hull $\con(f)$ is preserved by the action of $(\rho(g),\varrho(g))$ for all $g\in\pi_1(S)$. 

To conclude the proof, we need to show that we can choose support planes at every point of both boundary components of $\con(f)\setminus\gr(f)$ in such a way that this choice of support planes is also preserved by the action of $(\rho(g),\varrho(g))$ for all $g\in\pi_1(S)$. (Clearly it suffices to consider the situation at points that admit more than one support plane, because if $p\in\partial_\pm\con(f)$ has a unique support plane $P$, then $(\rho(g),\varrho(g))\cdot P$ is the unique support plane at $(\rho(g),\varrho(g))\cdot p$.)

When we have shown this, we will take the left and right projections $\pi_l^\pm,\pi_r^\pm$ defined via this invariant choice of support planes. By Lemma \ref{lem:left right proj}, we will then deduce that the left projection $\pi_l^\pm:\partial_\pm\con(f)\to\Hyp^2$ is equivariant with respect to the action of $(\rho(g),\varrho(g))$ on $\partial_\pm \con(f)$ and the action of $\rho(g)$ on $\Hyp^2$; analogously the right projection $\pi_r^\pm:\partial_\pm\con(f)\to\Hyp^2$ is equivariant with respect to the action of $(\rho(g),\varrho(g))$ on $\partial_\pm \con(f)$ and the action of $\varrho(g)$ on $\Hyp^2$. Following the proof of Theorem \ref{thm:main}, the left and right earthquake maps obtained as the composition $(\pi_r^\mp)^{-1}\circ\pi_l^\mp$ will be $(\rho,\varrho)$-equivariant, and the proof will be concluded.

First, we need to prove an intermediate claim. Suppose $p\in\partial_\pm\con(f)$ admits several support planes. By Lemma \ref{lemma:intersection of two support planes}, there is a spacelike geodesic $L\subset\partial_\pm\con(f)$ containing $p$. Let $g\in\pi_1(S)$ be such that  $(\rho(g),\varrho(g))\cdot L=L$. Then we claim that $(\rho(g),\varrho(g))$ maps every support plane at $p$ to itself. To prove this claim, we use Corollary \ref{cor: image does not depend} and suppose up to an isometry (so that, in the notation of Corollary \ref{cor: image does not depend}, $\gamma_1=\1$) that all the support planes at $p$ are of the form $P_{\exp(t\mathfrak a)}$ with $t\in [0,1]$, where $\gamma:=\exp(\mathfrak a)$ is a hyperbolic element.

Clearly $(\rho(g),\varrho(g))$ must preserve the pair of "extreme" support planes $P_\1$ and $P_\gamma$. Hence there are two possibilities: either $(\rho(g),\varrho(g))$ maps $\1$ to $\1$ and $\gamma$ to $\gamma$, or it switches $\1$ and $\gamma$. However, the latter possibility cannot be realized, since the identities $\rho(g)\varrho(g)^{-1}=\gamma$ and $\rho(g)\gamma\varrho(g)^{-1}=\1$ would imply that $\gamma$ has order two, and this is not possible for a hyperbolic element. We thus have $(\rho(g),\varrho(g))\cdot\1=\1$ and $(\rho(g),\varrho(g))\cdot\gamma=\gamma$. This implies first that $\rho(g)=\varrho(g)$. Moreover $\rho(g)\gamma\rho(g)^{-1}=\gamma$, which shows that $\rho(g)=\varrho(g)=\exp(s\mathfrak a)$ for some $s\in\R$. Therefore $\rho(g)\exp(t\mathfrak a)\rho(g)^{-1}=\exp(t\mathfrak a)$ for all $t$, that is $(\rho(g),\varrho(g))=(\rho(g),\rho(g))$ maps every support plane $P_{\exp(t\mathfrak a)}$ to itself.  

Having shown the claim, we can conclude as follows. Observe that the set of points $p\in\partial_\pm\con(f)$ that admit several support planes form a disjoint union of spacelike geodesics in $\partial_\pm\con(f)$, and that this set (say $X$) is invariant under the action of $(\rho(g),\varrho(g))$ for all $g\in\pi_1(S)$. Pick a subset $\{L_i\}_{i\in I}$ of this family of geodesics such that its $\pi_1(S)$-orbit is $X$, and that the orbits of $L_i$ and $L_j$ are disjoint if $i\neq j$. Pick a support plane $P_i$ at $p\in L_i$, and then we declare that $(\rho(g_0),\varrho(g_0))\cdot P_i$ is the chosen support plane at every point of $(\rho(g_0),\varrho(g_0))\cdot L_i$. This choice is well-defined by the above claim, which showed that if $(\rho(g),\varrho(g))$ leaves $L_i$ invariant, then it also leaves every support plane at $L_i$ invariant. Moreover this choice of support planes is invariant by the action of $\pi_1(S)$ by construction. This concludes the proof.
\end{proof}

\section*{Appendix: two lemmas in the hyperbolic plane}

We provide here the proofs of two properties on the action on $\Hyp^2\cup \partial_\infty\Hyp^2$ of sequences of elements in $\PSL(2,\R)$. We prove them by elementary arguments in the specific case of sequences of order--two elliptic isometries.

The first elementary property that we prove here is the uniform convergence of the action of elliptic isometries on the compactification of $\Hyp^2$.

\begin{lemma}\label{lemma:unif cont}
Let $w_n$ be a sequence in $\Hyp^2$ converging to $w\in \Hyp^2$. Then $\mathcal R_{w_n}$ converges to $\mathcal R_w$ uniformly on $\Hyp^2\cup\partial_\infty\Hyp^2$.
\end{lemma}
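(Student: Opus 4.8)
The plan is to reduce the statement to an explicit computation by conjugating so that $w$ becomes a convenient point, and then to exploit compactness of $\overline\Hyp{}^2 = \Hyp^2\cup\partial_\infty\Hyp^2$. First I would recall the explicit formula for the order--two elliptic isometry with a given fixed point: if $w = a+bi$ with $b>0$, then $\mathcal R_w$ is represented (up to sign) by the matrix
$$
\frac{1}{b}\begin{pmatrix} a & -(a^2+b^2) \\ 1 & -a \end{pmatrix}\in\SL(2,\R)~,
$$
which one checks has trace zero and squares to $-\1$, hence is an involution, and fixes $w$. In particular the entries of this matrix depend continuously on $w\in\Hyp^2$. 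Therefore $w_n\to w$ implies that (a choice of lift in $\SL(2,\R)$ of) $\mathcal R_{w_n}$ converges entrywise to a lift of $\mathcal R_w$, i.e.\ $\mathcal R_{w_n}\to\mathcal R_w$ in $\PSL(2,\R)$.

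Next I would upgrade this convergence in $\PSL(2,\R)$ to uniform convergence of the action on $\overline\Hyp{}^2$. The point is that the action map $\PSL(2,\R)\times\overline\Hyp{}^2\to\overline\Hyp{}^2$, $(\gamma,z)\mapsto\gamma(z)$, is continuous, and $\overline\Hyp{}^2$ is compact. Fix a metric $d$ on $\overline\Hyp{}^2$ inducing its topology (for instance the spherical metric coming from the one-point compactification of the closed half-plane, or the chordal metric on the disc model). Given $\varepsilon>0$, continuity of the action together with compactness of $\overline\Hyp{}^2$ gives, for each $z\in\overline\Hyp{}^2$, a neighbourhood of $\mathcal R_w$ in $\PSL(2,\R)$ and of $z$ in $\overline\Hyp{}^2$ on which the displacement of the action from $\mathcal R_w(z)$ is less than $\varepsilon$; by a standard tube-lemma / uniform-continuity argument on the compact set $\{\mathcal R_w\}\times\overline\Hyp{}^2$ one obtains a single neighbourhood $\mathcal{U}$ of $\mathcal R_w$ in $\PSL(2,\R)$ such that $d(\gamma(z),\mathcal R_w(z))<\varepsilon$ for all $\gamma\in\mathcal U$ and all $z\in\overline\Hyp{}^2$. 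Since $\mathcal R_{w_n}\in\mathcal U$ for $n$ large, this yields $\sup_{z\in\overline\Hyp{}^2} d(\mathcal R_{w_n}(z),\mathcal R_w(z))\to 0$, which is the claimed uniform convergence.

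The only mild subtlety—and the step I expect to require the most care—is to make sure the argument is genuinely \emph{uniform up to the boundary} $\partial_\infty\Hyp^2$, rather than just locally uniform on $\Hyp^2$; this is exactly why we work with the compactification $\overline\Hyp{}^2$ and a metric $d$ compatible with its topology, so that the compactness argument applies globally. If one prefers to avoid invoking the tube lemma abstractly, one can instead argue directly: pass to the disc model, where elements of $\PSL(2,\R)$ act by restrictions of Möbius transformations of $\C\cup\{\infty\}$, note that $\mathcal R_{w_n}\to\mathcal R_w$ in $\PSL(2,\R)$ means convergence of the corresponding Möbius coefficients, and that a convergent sequence of Möbius transformations whose limit has no pole on the compact set $\overline{\mathbb D}$ converges uniformly there in the spherical metric. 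Either way, no deep input is needed; the content of the lemma is entirely the continuity of the matrix entries of $\mathcal R_w$ in $w$ plus compactness of $\overline\Hyp{}^2$.
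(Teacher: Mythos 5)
Your proof is correct, but it takes a softer route than the paper. The paper's proof is a direct computation: it normalizes $w=i$, writes the explicit formula $\mathcal R_{w_n}(z)=(\cos(\eta_n)z-|w_n|)/(|w_n|^{-1}z-\cos(\eta_n))$ for $w_n=|w_n|e^{i\eta_n}$, conjugates into the disc model by $\psi(z)=(iz+1)/(z+i)$, and bounds $|\psi\circ\mathcal R_{w_n}\circ\psi^{-1}(z)+z|$ explicitly by a quantity tending to $0$ uniformly over the closed disc. You instead observe that $w\mapsto\mathcal R_w$ is continuous into $\PSL(2,\R)$ (your matrix $\frac 1b\begin{pmatrix} a & -(a^2+b^2)\\ 1 & -a\end{pmatrix}$ is the correct lift) and then invoke joint continuity of the action of $\PSL(2,\R)$ on the compact space $\overline\Hyp{}^2$ together with the tube lemma. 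Both arguments are sound. Yours is more general --- it shows that \emph{any} sequence converging in $\PSL(2,\R)$ acts uniformly convergently on $\overline\Hyp{}^2$, with no special role played by order--two elliptics --- and it cleanly isolates where compactness enters. What the paper's computation buys is self-containedness: the appendix is explicitly advertised as giving ``elementary, self-contained proofs,'' and the explicit estimate avoids appealing to the (standard, but unproved in the paper) joint continuity of the M\"obius action on the compactified half-plane. If you keep your version, you should either cite or briefly justify that joint continuity (e.g.\ via the algebraic action of $\PSL(2,\C)$ on $\C\mathrm P^1$ restricted to the closed invariant subset $\overline\Hyp{}^2$), since it is the only non-explicit ingredient.
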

\begin{proof}
 Up to conjugation, we may assume $w=i$. Writing $w_n=\vert w_n \vert e^{i\eta_n}$, it is easy to check that 
 $$\mathcal R_{w_n}(z)=\frac{\cos(\eta_n)z-\vert w_n\vert}{\vert w_n \vert^{-1}z-\cos(\eta_n)}~.$$
Let us conjugate $\mathcal R_{w_n}$ by the map $\psi(z)=(iz+1)/(z+i)$, which maps $\Hyp^2$ to the disc, and show that it converges to $z\mapsto -z$ uniformly on the closed disc. For $z\in \Hyp^2\cup\partial_\infty\Hyp^2$ we have
$$\psi\circ \mathcal R_{w_n}\circ \psi^{-1}(z)+z=\frac{(\vert w_n \vert^{-1}-\vert w_n \vert-2i\cos(\eta_n))z^2+(\vert w_n \vert^{-1}-\vert w_n \vert+2i\cos(\eta_n))}{(\vert w_n \vert^{-1}-\vert w_n \vert-2i\cos(\eta_n))z+i(\vert w_n\vert+\vert w_n \vert^{-1})}$$
Hence  
$$\vert \psi\circ \mathcal R_{w_n}\circ \psi^{-1}(z)+z \vert \leq \frac{2\vert \alpha_n\vert}{\vert\alpha_n z+\beta_n\vert}$$
where $\alpha_n=\vert \vert w_n \vert^{-1}-\vert w_n \vert-2i\cos(\eta_n)\vert$ and $\beta_n=i(\vert w_n\vert+\vert w_n \vert^{-1})$.  Thus $$\vert \psi\circ \mathcal R_{w_n}\circ \psi^{-1}(z)+z \vert \leq \frac{2}{\vert z+\frac{\beta_n}{\alpha_n}\vert}\leq \frac{2}{\vert \vert \frac{\beta_n}{\alpha_n}\vert -\vert z \vert \vert}$$
Since $\vert \beta_n\vert\geq 2$, $\vert \alpha_n\vert\to 0$ and $\vert z\vert\leq 1$, there exists $n_0\in \mathbb{N}$ such that 
the right-hand side is smaller than $\epsilon$ for all $z$ in the closed disc.
This completes the proof.
\end{proof}

The second property is a special case of the so-called \emph{North-South dynamics}. 

\begin{lemma}\label{lemma:div rotations}
Let $w_n$ be a sequence in $\Hyp^2$ converging to $w\in \partial_\infty\Hyp^2$. Then, for every neighbourhood $U$ of $w$, there exists $n_0$ such that 
$\mathcal R_{w_n}((\Hyp^2\cup\partial_\infty\Hyp^2)\setminus U)\subset U$ for $n\geq n_0$.
\end{lemma}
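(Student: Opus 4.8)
The plan is to prove the North--South dynamics statement (Lemma \ref{lemma:div rotations}) by an explicit computation, exactly in the spirit of the previous lemma, exploiting the very rigid form of an order--two elliptic isometry. First I would reduce to a normal form: since conjugating by a fixed isometry changes nothing in the statement (it only moves $U$ and $w$ by a homeomorphism of $\overline{\Hyp}{}^2$), I may assume that $w_n\to\infty\in\partial_\infty\Hyp^2$. Writing $w_n=a_n+ib_n$ with $b_n>0$, the condition $w_n\to\infty$ means $|w_n|\to\infty$, i.e. $\max(|a_n|,b_n)\to\infty$. Using the formula for $\mathcal R_{w_n}$ already derived in the proof of Lemma \ref{lemma:unif cont} (or re-deriving it directly: $\mathcal R_{a+ib}(z)=\frac{az-(a^2+b^2)}{z-a}$ as a homography), I would write out $\mathcal R_{w_n}$ explicitly as a M\"obius transformation and analyze where it sends the complement of a neighbourhood of $\infty$.

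The key steps, in order: (1) Fix a neighbourhood $U$ of $\infty$; without loss of generality $U=\{|z|>R\}\cup\{\infty\}$ (together with the part of $\partial_\infty\Hyp^2$ it contains) for some large $R$, since such sets form a neighbourhood basis at $\infty$ in $\overline{\Hyp}{}^2$. (2) Take $z$ in the complement of $U$, so $|z|\le R$ (and $z\in\overline{\Hyp}{}^2$); I want to show $|\mathcal R_{w_n}(z)|>R$ for $n$ large. (3) From the homography formula, estimate $|\mathcal R_{w_n}(z)|=\frac{|a_nz-(a_n^2+b_n^2)|}{|z-a_n|}$ from below: the numerator is comparable to $a_n^2+b_n^2=|w_n|^2$ when $|z|\le R$ and $|w_n|$ is large (since $|a_nz-(a_n^2+b_n^2)|\ge |w_n|^2-R|a_n|\ge |w_n|^2-R|w_n|$), while the denominator satisfies $|z-a_n|\le R+|a_n|\le R+|w_n|$. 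Hence $|\mathcal R_{w_n}(z)|\ge \frac{|w_n|^2-R|w_n|}{R+|w_n|}=\frac{|w_n|(|w_n|-R)}{|w_n|+R}$, which tends to $+\infty$ as $|w_n|\to\infty$, uniformly in $z$ with $|z|\le R$. (4) Therefore there is $n_0$ such that for $n\ge n_0$ this lower bound exceeds $R$, giving $\mathcal R_{w_n}(\overline{\Hyp}{}^2\setminus U)\subset U$, as desired.

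One subtlety I would be careful about: the normal form $w=\infty$ is obtained by conjugating by a fixed element $g\in\PSL(2,\R)$, so $\mathcal R_{w_n}$ becomes $g\,\mathcal R_{w_n}\,g^{-1}=\mathcal R_{g(w_n)}$ and $U$ becomes $g(U)$, which is still a neighbourhood of $g(w)=\infty$ in $\overline{\Hyp}{}^2$; this is legitimate because $g$ acts as a homeomorphism of $\overline{\Hyp}{}^2$. Another point to check is that when $a_n=0$ the homography degenerates slightly (it becomes $z\mapsto -b_n^2/z$), but the same lower bound $|\mathcal R_{w_n}(z)|=b_n^2/|z|\ge b_n^2/R\to\infty$ still applies, so no special case is truly needed --- the estimate in step (3) was written using only $|w_n|$ and $|z|$ and remains valid. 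I do not expect any serious obstacle here: the only mildly delicate part is making sure the lower bound in step (3) is genuinely uniform in $z$ over the compact set $\{|z|\le R\}\cap\overline{\Hyp}{}^2$, but since the bound I wrote depends only on $R$ and $|w_n|$, uniformity is automatic.
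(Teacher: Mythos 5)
Your proof is correct and follows essentially the same route as the paper's: reduce to $w=\infty$ by conjugation, restrict to basic neighbourhoods $\{|z|>R\}$, and bound $|\mathcal R_{w_n}(z)|$ from below uniformly on $\{|z|\le R\}$ using the explicit homography formula. The only difference is that you work in rectangular coordinates $w_n=a_n+ib_n$ while the paper uses polar coordinates $w_n=|w_n|e^{i\eta_n}$, which is purely cosmetic; your estimate $|\mathcal R_{w_n}(z)|\ge |w_n|(|w_n|-R)/(|w_n|+R)$ is valid and handles the degenerate case $a_n=0$ as you note.
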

\begin{proof}
We adopt the same notation as in the proof of Lemma \ref{lemma:unif cont}. Up to conjugation,  we may assume that $w=\infty$. It is sufficient to consider neighbourhoods $U$ of the form  $U_r=\{\vert z\vert>r \}\subset\Hyp^2\cup\partial_\infty\Hyp^2$. By a direct computation, 
$$ \vert \mathcal R_{w_n}(z)\vert=\frac{\vert\cos(\eta_n)z-\vert w_n\vert\vert}{\vert\vert w_n \vert^{-1}z-\cos(\eta_n)\vert} \geq \frac{\vert w_n\vert- \vert\cos(\eta_n)\vert\vert z\vert}{\vert w_n \vert^{-1}\vert z\vert+\vert \cos \eta_n\vert}.$$
Since $w_n$ converges to $\infty$, for all $r$ we have $\vert w_n\vert\geq r\geq\vert z\vert\geq \vert \cos \eta_n\vert \vert z\vert$ if $n$ is sufficiently large and $z$ is in the complement of $U_r$. Then
$$\vert \mathcal R_{w_n}(z)\vert\geq \frac{\vert w_n\vert-r}{\vert w_n \vert^{-1}{r}+\vert \cos \eta_n\vert}\longrightarrow +\infty.$$
It follows that $\vert \mathcal R_{w_n}(z)\vert>r$ for  $n\geq n_0$, that is, $\mathcal R_{w_n}$ maps the complement of $U_r$ to $U_r$.
\end{proof}

\end{document}